\newtheorem{theorem}{Theorem}
\newtheorem{corollary}[theorem]{Corollary}
\newtheorem{definition}[theorem]{Definition}
\newtheorem{lemma}[theorem]{Lemma}
\newtheorem{notation}[theorem]{Notation}
\newtheorem{proposition}[theorem]{Proposition}
\newtheorem{remark}[theorem]{Remark}
\def\@bibliosize{\small}
\def\@historysize{\small}
\def\@keywordsize{\small}
\def\@overaddressskip{2pt}
\def\@titlesize{\Large\bfseries}
\def\@authorsize{\large}
\def\@keywordheading{{\it Key words: \ }}
\def\@addressstyle{\small\itshape}
\def\@captionsize{\small}
\def\@tablecaptionsize{\@captionsize}
\def\@figurecaptionsize{\@captionsize}
\def\@tablesize{\small}
\def\@keywordwidth{.8\textwidth}
\def\@abstractwidth{.8\textwidth}
\def\blfootnote{\xdef\@thefnmark{}\@footnotetext}
\begin{document}

\title[the completion problem of quasi-uniform spaces]
{A solution to the completion problem of quasi-uniform spaces}

\author{Athanasios Andrikopoulos \ \ John Stabakis}

\begin{abstract} We give a new completion for the quasi-uniform spaces. We call the
whole procedure {\it $\tau$-completion} and the new space {\it
$\tau$-complement of the given}. The basic result is that every
$T_{_0}$ quasi-uniform space has a $\tau$-completion. The
$\tau$-complement has some \textquotedblleft
crucial\textquotedblright properties, for instance, it coincides
with the classical one in the case of uniform space or it extends
the {\it Doitcinov's completion for the quiet spaces}. We use nets
and from one point of view the technique of the construction may be
considered as a combination of the {\it Mac Neille's cut} and of the
completion of partially ordered sets via {\it directed subsets}.
\end{abstract}

\maketitle

{\it {\rm 2000 AMS Classification}: 54E25, 54E35, 54E55.}

{\it Key words: {\rm completion of a quasi-uniform space,
quiet spaces, $D$-completion.}}

\section{Introduction}

It is our main purpose in this paper to give a standard construction
for the completion of any $T_{_0}$ quasi-uniform space.
Cs\'{a}sz\'{a}r was the first who developed a theory of completion for quasi uniform spaces \cite{csa}.
Since then, the problem has been
approached by several authors, but, up to now, none of
the solutions proposed is able to give a satisfying completion theory for
all quasi-uniform spaces.
We must add that in two rather recent papers,
[Bosangue and all, 1998] and [Kunzi and all, 2002], it is attempted to be given a
definite resolution of the subject. In the first of them, starting
from a Lawvere's idea \cite{law} on a generalization of metric and
ordered spaces, the authors give a completion of quasi-uniform spaces,
rather according to the completion of some families of quasi-uniform spaces than to the completion of an ordered space.
In the second, they give the
completion of some families of generalized metric spaces considering it as the {\it Yoneda
embedding}. However, although for some cases of quasi-uniform spaces a {\it standard
construction} is possible, there is not a general method for any
quasi-uniform space. Even simple examples,
as, for instance, the topological ordered spaces, are not in general covered by the
current constructions.

It is not difficult for one to understand that the problem is not an
easy task. \textquotedblleft {\it The problem of completing of quasi-uniform spaces
is not a trivial one. The analogy with the uniform spaces provides a
too small profit}\textquotedblright says Doitchinov in \cite{doi3}.
S\"{u}nderhauf in \cite{sun1}: \textquotedblleft {\it there are
various attempts to define a notion of completeness and completion,
but none of these is able to handle all spaces in a satisfying
manner}\textquotedblright.

And later, the same author in \cite{sun2}: \textquotedblleft {\it as
soon as the notion of Cauchy sequences} ({\it and filters}), {\it limits and
completeness come into play, the situation becomes rather
chaotic}\textquotedblright. And others say such things in other papers.

From one point of view the problem of completing quasi-uniform spaces is due to the
plethora of cases which one has to confront. This plethora is due to
the fact that in a quasi-uniform spaces a \textquotedblleft {\it natural}\textquotedblright
Cauchy system does not converge in an also \textquotedblleft {\it natural
way}\textquotedblright. Consider, for instance, the real line
$\mathbb{R}$ with a topology $\tau$, whose a subbase consists of the
intervals $(\leftarrow,x]$, for any $x\in \mathbb{R}$. If $\mathcal{U}$ is a quasi-uniformity compatible
with $\tau$, then, for any
$a\in\mathbb{R}$, any sequence $(x_{_n})_{_{n \in \mathbb{N}}}$ with
$x_{_n}<a$, ($\mathbb{N}$ the set of
natural numbers), converges to $a$.

Such facts explaining why, amongst the many proposals,
Doitchinov's $D$-completion (firstly in \cite{doi1}, \cite{doi2},
etc.) of the so-called \textquotedblleft{\it quiet
spaces}\textquotedblright was particularly promising. He introduced
the idea of considering with every {\it Cauchy net}, a second net -
the so called {\it conet of the former} - which couple of nets
restricts the area where the \textquotedblleft new point
\textquotedblright must stand and so it is possible for one to give
a completion, the so called {\it $D$-completion}, for the so-called
{\it quiet spaces}. Unfortunately, although too many current
examples of quasi-uniform spaces have $D$-completion, that was not a resolution:
\textquotedblleft {\it the quiet spaces constitute a very small
class of spaces}\textquotedblright (cf. \cite{red}). Fletcher, in one his
reviewing, notes: \textquotedblleft {\it D.Doitchinov
showed that it is impossible to give a satisfactory theory of
completion for the class of quasi-uniform spaces and introduced the class of quiet
quasi-uniform spaces, a class comprising all uniform spaces for which a satisfactory
theory of completion exists}\textquotedblright.
\par
\smallskip
\par
The usual meaning of completeness is that every Cauchy system
(sequence or net or filter), whatever such a system means,
converges. So the definition of the Cauchy system is the first task.
But such a definition is not always a natural one. In some cases the
converging nets are not Cauchy nets, in some others they are Cauchy,
but cease to stand as Cauchy when we withdraw the limit points from
the space, in a third the completion has only one \textquotedblleft
{\it new point}\textquotedblright very artificially constructed and,
finally, we \textquotedblleft {\it complete}\textquotedblright with
\textquotedblleft {\it new points}\textquotedblright an already complete
space.
 In this paper, we introduce a new notion, the {\it
$\tau$-cut} (from the Greek word $\tau o \mu {\eta}$ $=$ cut). More
precisely, we make use of pairs \textquotedblleft
net-conet\textquotedblright
 as in the $D$-completion, here as
$\tau$-{\it net} and $\tau$-{\it conet}. The $\tau_{_p}$-nets are nets whose origin
is due to Stoltenberg \cite{sto}. For more details in the subject
see \cite{ku}.

Our basic conclusion is the Theorem \ref{a26} which states that every
$T_{_0}$ quasi-uniform spaces has a $\tau$-completion, that is a completion via the
$\tau$-cuts. The meaning of the $\tau$-Cauchy net of the
$\tau$-completion is given in the Definitions \ref{a8} and \ref{a9}.
As we have already said there exists a large number of
completions,
among them the Deak's and the ones of Smyth's and
Doitchinov's (there is a very large bibliography of Deak last papers; the paper \cite{dea} is among of them).

So, the paper is organized as follows: in the paragraphs 2 we give
the basic definitions and structures for the $\tau$-completion;
paragraph 3 is referred to the definition of a quasi-uniformity in
the $\tau$-completion and in the fourth paragraph we prove the
$\tau$-completeness of the new space we have constructed.

\section{The $\tau$-cut}

Let us recall that a quasi-uniformity on a (nonempty) set $X$ is a
filter $\mathcal{U}$ on $X\times X$
which satisfies: (i) $\Delta(X)=\{(x,x)\vert x\in X\}\subseteq U$
for each $U\in \mathcal{U}$
and (ii) given $U\in \mathcal{U}$
there exists $V\in \mathcal{U}$ such that
$V\circ
V\subseteq U$. The elements of the filter
$\mathcal{U}$
are called {\it entourages}.
The pair $(X,\mathcal{U})$ is called a {\it quasi-uniform
space}. If $\mathcal{U}$ is a quasi-uniformity on a set $X$, then ${\mathcal{U}}^{-1}=\{U^{-1}\vert U\in \mathcal{U}\}$ is also a
quasi-uniformity on $X$ called the {\it conjugate} of $\mathcal{U}$.
Given a quasi-uniformity $\mathcal{U}$ on $X$, $\mathcal{U}^{\star}=\mathcal{U}\bigvee \mathcal{U}^{-1}$ will denote the coarsest uniformity on $X$
which is finer than $\mathcal{U}$. If $U\in \mathcal{U}$, the entourage
$U\cap U^{-1}$ of $\mathcal{U}^{\star}$ will be denoted by $U^{\star}$.
Every quasi-uniformity $\mathcal{U}$ on $X$ generates a topology $\tau(\mathcal{U})$.
A neighborhood base for each point $x\in X$ is given by $\{U(x)\vert U\in \mathcal{U}\}$ where
$U(x)=\{y\in X\vert (x,y)\in U\}$.
\par
It is well known that there is a base for ${\mathcal{U}}$, which we always
denote by ${\mathcal{U}}_{_0}$, consisting of all $U\in {\mathcal{U}}$ that are
$\tau({\mathcal{U}}^{-1})\times \tau({\mathcal{U}})$-open in $X\times X$ (cf.
\cite[page 8]{FL}).
\par
According to D.Doitchinov \cite[Definition 1]{doi2}, a net
$(y_{_\beta})_{_{\beta\in B}}$ is called {\it a conet} of the net
$(x_a)_{a\in A}$, if for any $U\in {\mathcal{U}}$ there are $a_{_U} \in
A$ and $\beta _{_U} \in B$ such that $(y_\beta, x_a)\in U$ whenever
$a\geq a_{_U}$ and $\beta \geq \beta_{_U}$. In this case we write
$(y_{_\beta},x_{_a})\longrightarrow 0$ (see fig.1).

\pspicture(0.5,0.5)(1.8,1.8)

\Rput[ul](.3,1){$(x_{_a})_{_{a\in A}}$}

\psline[linestyle=dotted]{->}(.1,.5)(6.2,.5)

\psline[linestyle=dotted]{<-}(6.3,.5)(12,.5)

\Rput[ul](10.5,1){$(y_{_\beta})_{_{\beta\in B}}$}

\psline{<-}(3.3,.2)(6,.2)

\Rput[ul](5.95,.2){${\rm U}$}

\psline{->}(6.5,.2)(9,.2)

\qdisk(9,0.5){2pt}

\qdisk(3.25,0.5){2pt}

\Rput[ul](2.8,.8){$x_{_{a_{_U}}}$}

\Rput[ul](8.8,.8){$y_{_{\beta_{_U}}}$}

\endpspicture
\par\bigskip
\begin{center}
Figure 1
\end{center}
\par\smallskip\par

We
preserve the Doitchinov's duality of net-conet and we will make use
of a notion given by Kelly (\cite[page 75]{kel}) under the terminology of
$p$-sequence and $q$-sequence referring to the two directions of a
net, and by Stoltenberg (\cite[page 229]{sto}) who worked with only the one
direction.

\begin{definition}\label{a1}{\rm A net $(x_{_a})_{_{a\in A}}$ in a quasi-uniform space $(X,{\mathcal{U}})$
is called $\tau_{_p}$-{\it net} (resp. $\tau_{_q}$-{\it net}) {\it for} $W$, if there is an $a_{_W}\in A$ such that
$(x_{_{a^{\prime}}},x_{_a})\in
W$ (resp.
$(x_{_a},x_{_{a^{\prime}}})\in
W$)
for each $a^{\prime}\geq a\geq a_{_W}$.
The net $(x_{_a})_{_{a\in A}}$ is called $\tau_{_p}$-{\it net} (resp. $\tau_{_q}$-{\it net}), if for each
$W\in {\mathcal{U}}$, it is a
$\tau_{_p}$-{\it net} (resp. $\tau_{_q}$-{\it net}) for $W$.
We call $x_{a_{_W}}$
{\it extreme point for} $W$ of the $\tau_{_p}$-net (resp. $\tau_{_q}$-net) and the
$a_{_W}$ {\it extreme index for} $V$
(see fig. 2).}
\end{definition}

\par
We will make use of the phrase \textquotedblleft {\it final segment
of a $\tau_{_p}$-net}\textquotedblright: we mean, the
set of the elements of the $\tau_{_p}$-net from one  point of it onwards
({\it until the end}). We dually define the \textquotedblleft{\it
{final segment of a $\tau_{_q}$-net}\textquotedblright}.
\par
We also say for a net $(x_a)_{a\in A}$, $t \in X$ and $U\in
{\mathcal{U}}$:
\par
\begin{center}{ \textquotedblleft {\it finally} $(t, (x_a )_a ) \in U $
\textquotedblright or, in symbols, \textquotedblleft
$\tau.(t,(x_a)_a)\in U$\textquotedblright,}
\end{center}
if $(t,x_a)\in U$ for all the points $x_a$ of a final segment of
$(x_a)_{a\in A}$.

\pspicture(0.5,0.5)(1.8,1.8)

\Rput[ul](2,1){$(x_{_a})_{_{a\in A}}$}

\Rput[ul](.1,.5){$\tau_{_p}$-net}

\psline[linestyle=dotted]{->}(1.8,.5)(10,.5)

\psline{<-}(4.3,.1)(6.9,.1)

\Rput[ul](6.8,.1){$W(x_{_{a^{\prime}}}$)}

\psline{->}(8.2,.1)(8.9,.1)

\qdisk(4.25,0.5){2pt}

\Rput[ul](3.8,.8){$x_{_{a_{_W}}}$}

\qdisk(5.3,.5){2pt}

\Rput[ul](5,.8){$x_{_{a}}$}

\qdisk(7.3,.5){2pt}

\Rput[ul](6.9,.8){$x_{_{a^{\prime}}}$}

\Rput[ul](6.5,-.58){${\rm A}$}
\psline{-}(4.3,-.5)(9,-1.2)

\psline{-}(4.3,-1.5)(9,-1.2)

\qdisk(4.3,-1){1pt}
\Rput[ul](4,-.8){$a_{_W}$}

\qdisk(5.3,-1.3){1pt}

\Rput[ul](5,-1.1){$a$}

\qdisk(6.4,-1.2){1pt}

\Rput[ul](6.2,-1){$a^{\prime}$}

\endpspicture
\par\bigskip\bigskip
\begin{center}

\end{center}
\par\bigskip\smallskip\par

\pspicture(0.5,0.5)(1.8,1.8)

\Rput[ul](8.1,1){$(x_{_a})_{_{a\in A}}$}

\psline[linestyle=dotted]{<-}(1.8,.5)(10,.5)

\Rput[ul](.1,.5){$\tau_{_q}$-net}

\Rput[ul](6.8,.8){$x_{_{a_{_W}}}$}

\qdisk(4.3,.5){2pt}

\Rput[ul](4.1,.8){$x_{_{a}}$}

\qdisk(5.4,.5){2pt}

\Rput[ul](5.2,.8){$x_{_{a^{\prime}}}$}

\psline{<-}(3.3,.1)(3.7,.1)

\Rput[ul](3.6,.0){$W(x_{_a})$}

\psline{->}(4.9,.1)(7.1,.1)

\qdisk(7.05,0.5){2pt}

\endpspicture
\par\bigskip\bigskip\bigskip
\begin{center}
Figure 2
\end{center}
\par\bigskip\smallskip\par

\par
We give a similar meaning in the notations:
$\tau.((y_{_\beta})_{_\beta},t)\in U$ and in the same way
$\tau.((y_{_{\beta}})_{_{\beta}},(x_a)_a)\in
U$ (see fig. 3).

\pspicture(0.5,0.5)(1.8,1.8)

\Rput[ul](0.1,1){$(x_{_a})_{_{a\in A}}$}

\psline[linestyle=dotted]{->}(0.2,.5)(5,.5)

\psline{<-}(1.3,.2)(2.9,.2)

\Rput[ul](2.9,.2){${\rm U}$}

\psline{->}(3.5,.2)(5.1,.2)

\psline{<-}(3.5,-1.1)(5.5,-1.1)

\Rput[ul](5.5,-1.1){${\rm U}$}

\psline{->}(6,-1.1)(8.1,-1.1)

\Rput[ul](4.4,-1.8)
{$\tau.((y_{_\beta})_{_\beta},(x_{_a})_{_a})\in U$}

\qdisk(4.25,0.5){2pt}

\Rput[ul](3.8,.8){$t$}

\Rput[ul](2.1,-.5)
{$\tau.(t,(x_{_a})_{_a})\in U$}

\Rput[ul](9.7,1){$(y_{_\beta})_{_{\beta\in B}}$}

\psline[linestyle=dotted]{<-}(6.5,.5)(11.3,.5)

\psline{<-}(6.5,.2)(8.1,.2)

\Rput[ul](8.0,.2){${\rm U}$}

\psline{->}(8.5,.2)(10.1,.2)

\qdisk(7.2,0.5){2pt}

\Rput[ul](7.1,.8){$t$}

\Rput[ul](7.3,-.5)
{$\tau.(t,(y_{_\beta})_{_\beta})\in U$}

\endpspicture
\par\bigskip\bigskip\bigskip\bigskip\bigskip\bigskip\bigskip\bigskip\bigskip
\begin{center}
Figure 3
\end{center}

\begin{remark}\label{a2}{\rm It is not obligatory for a convergent net to get subnets
which are $\tau_{_p}$-nets or $\tau_{_q}$-nets.}
\end{remark}
\begin{definition}\label{a3}{\rm Let ${\mathcal {(A}},{\mathcal B})$ be an ordered couple whose members are non-empty
families of $\tau_{_p}$- and $\tau_{_q}$-nets respectively.
We say that ${\mathcal {(A}},{\mathcal B})$ is a $\tau$-{\it {cut {\rm ({\it of nets})}}}
if the following conditions are fulfilled:

(1) for every $U\in {\mathcal{U}}$, every $(x^i_a)_{a\in A_i}\in \mathcal A$ and
every $(y\!_{_{_{\beta}}}\!\!^j)_{_{_{\beta}\in B_j}}\in {\mathcal B}$
there holds
\begin{center}
$\tau.((y\!_{_{_{\beta}}}\!\!^{^j})_{_\beta}, (x_a^i)_{_a})\in U$.
\end{center}

(2) ${\mathcal B}$ contains all the $\tau_{_q}$-nets which are conets of all the $\tau_{_p}$-nets
of ${\mathcal A}$ and conversely: ${\mathcal A}$ contains
all the $\tau_{_p}$-nets whose conets are all the $\tau_{_q}$-nets elements of
${\mathcal B}$.

We call the member $\mathcal A$ (resp. ${\mathcal B})$ {\it first}
(resp.{\it second) class of the $\tau$-cut} and the elements of ${\mathcal{A}}$ and ${\mathcal{B}}$,  {\it elements of the
$\tau$-cut}.}
\end{definition}
Throughout the paper, for simplicity of the proofs,
we call the elements of ${\mathcal{B}}$ $\tau_{_q}$-{\it conets} of ${\mathcal{A}}$.
Moreover, by saying that a $\tau_{_p}$-net (resp. $\tau_{_q}$-conet) member of the first (resp. second) class of a $\tau$-cut converges to a point $x\in X$,
we mean that it converges with respect to $\tau(\mathcal{U})$ (resp. $\tau(\mathcal{U}^{-1})$)
(see fig. 4 and 6).

\begin{center}
$(\mathcal{A},\mathcal{B})$\ $\tau$-cut
\end{center}

\pspicture(0,0)(0,0)

\Rput[ul](2.2,-.1){$\mathcal{A}$}

\Rput[ul](8,-.1){$\mathcal{B}$}
\psline{-}(0.5,-.0)(5.1,-.7)

\psline{-}(0.5,-1)(5.1,-.7)

\qdisk(1,-.6){1pt}

\qdisk(1,-1.3){1pt}

\qdisk(1,-2){1pt}

\qdisk(1.7,-.25){1pt}

\qdisk(1.9,-.3){1pt}

\qdisk(2.1,-.5){1pt}

\qdisk(2.3,-.7){1pt}

\qdisk(2.5,-.7){1pt}

\qdisk(2.7,-.5){1pt}

\qdisk(2.9,-.5){1pt}

\qdisk(3.1,-.6){1pt}

\qdisk(3.3,-.7){1pt}

\qdisk(3.5,-.6){1pt}

\qdisk(3.7,-.7){1pt}

\psline[linestyle=dotted]{-}(3.9,-.7)(5.1,-.7)

\psline[linestyle=dotted]{-}(3.9,-1.34)(5.1,-1.3)

\psline[linestyle=dotted]{-}(3.9,-1.87)(5.1,-1.8)

\qdisk(1.7,-1.25){1pt}

\qdisk(1.9,-1.3){1pt}

\qdisk(2.1,-1.2){1pt}

\qdisk(2.3,-1.1){1pt}

\qdisk(2.5,-1.2){1pt}

\qdisk(2.7,-1.3){1pt}

\qdisk(2.9,-1.4){1pt}

\qdisk(3.1,-1.35){1pt}

\qdisk(3.3,-1.37){1pt}

\qdisk(3.5,-1.31){1pt}

\qdisk(3.7,-1.34){1pt}

\qdisk(1.6,-1.9){1pt}

\qdisk(1.9,-2.1){1pt}

\qdisk(2.1,-1.8){1pt}

\qdisk(2.3,-1.9){1pt}

\qdisk(2.5,-2){1pt}

\qdisk(2.7,-1.78){1pt}

\qdisk(2.9,-1.8){1pt}

\qdisk(3.1,-1.85){1pt}

\qdisk(3.3,-1.89){1pt}

\qdisk(3.5,-1.82){1pt}

\qdisk(3.7,-1.87){1pt}

\psline[linestyle=dotted]{-}(3.9,-1.34)(5.1,-1.3)

\psline[linestyle=dotted]{-}(3.9,-1.87)(5.1,-1.8)

\psline{-}(0.5,-.7)(5.1,-1.3)

\psline{-}(0.5,-1.7)(5.1,-1.3)

\Rput[ul](.9,-.5){$x^i_{_{a_{_{_U}}}}$}

\Rput[ul](.9,-1.29){$x^j_{_{a_{_{_U}}}}$}

\Rput[ul](.9,-1.95){$x^k_{_{a_{_{_U}}}}$}

\psline{-}(0.5,-1.6)(5.1,-1.8)

\psline{-}(0.5,-2.4)(5.1,-1.8)

\psline{-}(10.7,-.0)(5.7,-.7)

\psline{-}(10.7,-1)(5.7,-.7)

\psline{-}(10.7,-.8)(5.7,-1.2)

\psline{-}(10.7,-1.8)(5.7,-1.2)

\psline{<-}(1.1,-2.6)(5.3,-2.6)

\Rput[ul](5.25,-2.6){${\rm U}$}

\psline{->}(5.8,-2.6)(10,-2.6)

\Rput[ul](9.7,-1.35){$y^j_{_{\beta_{_{_U}}}}$}

\Rput[ul](9.7,-.5){$y^i_{_{\beta_{_{_U}}}}$}

\qdisk(9.8,-.6){1pt}

\qdisk(9.5,-.7){1pt}

\qdisk(9.2,-.7){1pt}

\qdisk(8.9,-.6){1pt}

\qdisk(8.7,-.8){1pt}

\qdisk(8.5,-.56){1pt}

\qdisk(8.3,-.42){1pt}

\qdisk(8.1,-.78){1pt}

\qdisk(7.9,-.69){1pt}

\qdisk(7.7,-.55){1pt}

\qdisk(7.5,-.60){1pt}

\qdisk(7.3,-.58){1pt}

\qdisk(9.8,-1.4){1pt}

\qdisk(9.5,-1.5){1pt}

\qdisk(9.2,-1.45){1pt}
\qdisk(8.9,-1.5){1pt}
\qdisk(8.7,-1.2){1pt}
\qdisk(8.5,-1.36){1pt}
\qdisk(8.3,-1.42){1pt}

\qdisk(8.1,-1.38){1pt}

\qdisk(7.9,-1.28){1pt}

\qdisk(7.7,-1.35){1pt}

\qdisk(7.5,-1.28){1pt}

\qdisk(7.3,-1.26){1pt}

\psline[linestyle=dotted]{-}(7.3,-0.58)(5.7,-.7)

\psline[linestyle=dotted]{-}(7.3,-1.26)(5.7,-1.2)

\endpspicture
\par\bigskip\bigskip\bigskip\bigskip\bigskip\bigskip\bigskip\bigskip\bigskip
\begin{center}
Figure 4
\end{center}
\par\bigskip\smallskip\par

If all the elements of $\mathcal{A}$ and $\mathcal{B}$ converge to a point $x\in X$,
then $\{x\}$ belongs to both of the classes
$\mathcal{A}$ and $\mathcal{B}$. In this case, we call $x$ {\it end of the two classes} or we say
that {\it both of the classes have} $x$ {\it as an end point},
or we simply say that $x$ {\it is an end point of the $\tau$-cut
$(\mathcal{A},\mathcal{B})$}.
If there is an end point we say that
$(\mathcal{A},\mathcal{B})$ {\it converges} to $x$.
In the case of a uniform space the two classes coincide.

If the classes of $(\mathcal{A},\mathcal{B})$ have not an end point we say that $(\mathcal{A},\mathcal{B})$
is a $\tau$-{\it gap}. The set of all $\tau$-gaps of $X$ is symbolized by
$\Lambda(X)$.

\begin{notation}\label{a4}{\rm  We symbolize the set of all $\tau$-cuts of $X$ by $\overline{X}$.
For every $\xi\in \overline{X}$, we put $\xi=({\mathcal{A}}
_{\xi},{{\mathcal{B}}}_{\xi})$, ${{\mathcal{A}}}_{\xi}$,
${{\mathcal{B}}}_{\xi}$ being the two classes of the $\tau$-cut
${\xi}$. We will preserve this notation and terminology throughout the
paper (see fig. 5).}
\end{notation}

\par\bigskip\bigskip\bigskip\bigskip\par

\pspicture(0.5,0.5)(1.2,1.2)

\psline{->}(2.3,.2)(5,.2)

\psline{->}(2.3,.5)(5,.5)

\psline{->}(2.3,-.1)(5,-.1)

\psline{->}(5.5,-.4)(5.5,.9)

\psline{<-}(5.9,.5)(8.8,.5)

\psline{<-}(5.9,.2)(8.8,.2)

\Rput[ul](5.25,1.2)
{${\xi}$}

\Rput[ul](3.3,.8){${\mathcal{A}}_{_{\xi}}$}

\Rput[ul](6.9,.8){${\mathcal{B}}_{_{\xi}}$}

\endpspicture
\par\bigskip\bigskip\bigskip\bigskip
\par
\ \ \ \ \ \ \ \ \ \ \ \ \ \ \ \ \ \ \ \ \ \ \ \ \ \ \ \ \ \ \ \ \ \ \ \ \ \
Figure 5
\par
\par\bigskip\smallskip\par

\smallskip
\par
In a $T_{_0}$ space $X$, for any two elements $x$ and $y$, there is
an entourage, say $U$, such that the one of the points, say $y$,
does not belong to $U(x)$. It means that we may uniquely correspond
to every point $x\in X$ a $\tau$-cut $\phi(x)=({{\mathcal{A}}}_{_{\phi(x)}},{{\mathcal{B}}}_{_{\phi(x)}})$, where
all the points of
${{\mathcal{A}}}_{_{\phi(x)}}$ and
${{\mathcal{B}}}_{_{\phi(x)}}$
converge to $x$. We recall that the point $x$
itself, which is the \textquotedblleft end\textquotedblright of all
these $\tau_{_p}$-nets and $\tau_{_q}$-conets, belongs to both of the classes (see fig. 6).

We call the above map $\phi: X\longrightarrow \overline{X}$
\textquotedblleft{\it the canonical embedding of $X$ into}
$\overline{X}$\textquotedblright and we preserve the notation and
the meaning of that $\phi$ throughout all the paper.

So, from one point of view we may, roughly speaking, consider that
$\phi(x)$ and $x\in X$ coincide, giving a reason why we write
$X\subseteq \overline{X}$. By definition, $\overline{X}=\phi(X)\cup \Lambda(X)$.

\par\noindent

\bigskip\bigskip\par

\pspicture(0,0)(0,0)

\Rput[ul](2.3,-.1){${\mathcal{A}}_{_{\phi(x)}}$}

\Rput[ul](8.3,-.1){${\mathcal{B}}_{_{\phi(x)}}$}

\Rput[ul](5.17,-1.55){$x$}

\qdisk(1.7,-.25){1pt}

\qdisk(1.9,-.3){1pt}

\qdisk(2.1,-.5){1pt}

\qdisk(2.3,-.7){1pt}

\qdisk(2.5,-.7){1pt}

\qdisk(2.7,-.5){1pt}

\qdisk(2.9,-.6){1pt}

\qdisk(3.1,-.9){1pt}

\qdisk(3.3,-.9){1pt}

\qdisk(3.5,-.85){1pt}

\psline[linestyle=dotted]{-}(3.5,-.85)(5.1,-1.3)

\psline[linestyle=dotted]{-}(3.9,-1.34)(5.1,-1.3)

\qdisk(1.9,-1.3){1pt}

\qdisk(2.1,-1.2){1pt}

\qdisk(2.3,-1.1){1pt}

\qdisk(2.5,-1.2){1pt}

\qdisk(2.7,-1.3){1pt}

\qdisk(2.9,-1.4){1pt}

\qdisk(3.1,-1.35){1pt}

\qdisk(3.3,-1.37){1pt}

\qdisk(3.5,-1.31){1pt}

\qdisk(3.7,-1.34){1pt}

\qdisk(1.9,-2.1){1pt}

\qdisk(2.1,-1.8){1pt}

\qdisk(2.3,-1.9){1pt}

\qdisk(2.5,-2){1pt}

\qdisk(2.7,-1.78){1pt}

\qdisk(2.9,-1.8){1pt}

\qdisk(3.1,-1.85){1pt}

\qdisk(3.3,-1.7){1pt}

\qdisk(3.5,-1.6){1pt}

\qdisk(3.7,-1.65){1pt}

\psline[linestyle=dotted]{-}(3.7,-1.65)(5.1,-1.3)

\psline{->}(5.4,-1.3)(5.4,-.2)

\Rput[ul](5.5,-.3){$\phi(x)$}

\psline{-}(0.5,0.3)(5.4,-1.3)

\psline{-}(0.5,-.7)(5.4,-1.3)

\psline{-}(0.5,-.85)(5.4,-1.3)

\psline{-}(0.5,-1.7)(5.4,-1.3)

\psline{-}(0.5,-1.9)(5.4,-1.3)

\psline{-}(0.5,-2.7)(5.4,-1.3)

\psline{-}(10.7,-.0)(5.4,-1.3)

\psline{-}(10.7,-1)(5.4,-1.3)

\psline{-}(10.7,-.8)(5.4,-1.3)

\psline{-}(10.7,-1.8)(5.4,-1.3)

\Rput[ul](-.1,-.3){$(x^i_{_a})_{_{a\in A_{_i}}}$}

\Rput[ul](-.1,-1.3){$(x^j_{_a})_{_{a\in A_{_j}}}$}

\Rput[ul](-.1,-2.2){$(x^k_{_a})_{_{a\in A_{_k}}}$}

\Rput[ul](9.8,-1.35){$(y^j_{_{\beta}})_{_{\beta\in B_{_j}}}$}

\Rput[ul](9.8,-.5){$(y^i_{_{\beta}})_{_{\beta\in B_{_i}}}$}

\qdisk(9.8,-.6){1pt}

\qdisk(9.5,-.7){1pt}

\qdisk(9.2,-.7){1pt}

\qdisk(8.9,-.6){1pt}

\qdisk(8.7,-.8){1pt}

\qdisk(8.5,-.56){1pt}

\qdisk(8.3,-.8){1pt}

\qdisk(8.1,-.78){1pt}

\qdisk(7.9,-.91){1pt}

\qdisk(7.7,-.95){1pt}

\qdisk(7.5,-.9){1pt}

\qdisk(7.3,-1){1pt}

\psline[linestyle=dotted]{-}(7.3,-1)(5.7,-1.3)

\qdisk(9.8,-1.4){1pt}

\qdisk(9.5,-1.5){1pt}

\qdisk(9.2,-1.45){1pt}
\qdisk(8.9,-1.5){1pt}
\qdisk(8.7,-1.2){1pt}
\qdisk(8.5,-1.46){1pt}
\qdisk(8.3,-1.42){1pt}

\qdisk(8.1,-1.38){1pt}

\qdisk(7.9,-1.28){1pt}

\qdisk(7.7,-1.35){1pt}

\qdisk(7.5,-1.28){1pt}

\qdisk(7.3,-1.32){1pt}

\psline[linestyle=dotted]{-}(7.3,-1.32)(5.7,-1.3)

\psline{->}(1.8,-3)(5.3,-3)

\psline{<-}(5.8,-3)(9,-3)

\Rput[ul](3.5,-2.65){$\tau(\mathcal{U})$}

\Rput[ul](6.4,-2.65){$\tau(\mathcal{U}^{-1})$}

\endpspicture

\par\bigskip\bigskip\bigskip\bigskip\bigskip\bigskip\bigskip\bigskip\bigskip\bigskip\bigskip
\begin{center}
Figure 6
\end{center}
\par\bigskip\bigskip

\begin{remark}\label{a5}{\rm (i) There is a  correspondence between a point of a $T_0$-space and a
$\tau$-cut
(ii) If the $\tau_{_p}$-nets of the first class of a $\tau$-cut converge to a point $x$
and the $\tau_{_q}$-conets of the second class do not converge to $x$, then the $\tau$-cut is not the $\phi(x)$.
In this case the $\tau$-cut is a $\tau$-gap.

Consider the following example: in a $\tau$-cut $\xi=(\mathcal{A}_{_\xi},\mathcal{B}_{_\xi})$, the class $\mathcal{A}_{_\xi}$
consists of a $\tau_{_p}$-net $A$ and the second class $\mathcal{B}_{_\xi}$ consists of a
$\tau_{_q}$-conet $B$ and of a singleton $\{\beta\}$ as another $\tau_{_q}$-conet.
Then, the singleton $\{\beta\}$ itself constitutes another $\tau$-cut, the $\phi({\beta})$, different of the $\tau$-cut
$(\mathcal{A}_{_\xi},\mathcal{B}_{_\xi})$.}
\end{remark}

\begin{proposition}\label{a6}{\rm For every $\tau$-cut $({\mathcal{A}},{\mathcal{B}})$, every $U\in
{\mathcal{U}}$ and every $(x_a\!\!\!^i)_{a\in A_i}\in {\mathcal{A}}$,
$(y_{_{\beta}}\!\!^{^j})_{_{\beta\in B_j}}\in {\mathcal{B}}$, there are
fixed indices $a\!_{_{_U}}\!\!\!^i\in A_i$,
${\beta}\!_{_U}\!\!^{^j}\in B_j$ such that
$\tau.(y_{_{\beta}}\!\!^{^j},x_a\!\!\!^i)\in U$ for every $a\geq
a\!_{_{_U}}\!\!\!^i$ and $\beta\geq {\beta}\!_{_U}\!\!^{^j}$ (see fig. 7).}
\end{proposition}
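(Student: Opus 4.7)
The statement is essentially a notational unpacking of condition~(1) of Definition~\ref{a3}, made explicit so that later arguments can quote index thresholds instead of the opaque symbol $\tau.$ applied to a pair of nets. My plan is therefore not to build up new machinery, but to track the meaning of the symbol $\tau.$ back to its definition and read off the required indices.

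Concretely, I would fix the data of the proposition: a $\tau$-cut $(\mathcal{A},\mathcal{B})$, an entourage $U\in\mathcal{U}$, and arbitrary members $(x_a^i)_{a\in A_i}\in\mathcal{A}$ and $(y_\beta^j)_{\beta\in B_j}\in\mathcal{B}$. By the very definition of a $\tau$-cut (Definition~\ref{a3}(1)), we have
\[
\tau.\bigl((y_\beta^j)_\beta,(x_a^i)_a\bigr)\in U .
\]
I would then translate this symbol according to its meaning, introduced in the paragraphs preceding Figure~3: the extension of the single-point ``finally'' notation $\tau.(t,(x_a)_a)\in U$ to a pair of nets means exactly that one can choose a final segment of $(x_a^i)_{a\in A_i}$ and a final segment of $(y_\beta^j)_{\beta\in B_j}$ whose Cartesian product lies in $U$. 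Spelling this out in quantifiers produces indices $a_U^i\in A_i$ and $\beta_U^j\in B_j$ with the property that $(y_\beta^j,x_a^i)\in U$ whenever $a\geq a_U^i$ and $\beta\geq\beta_U^j$, which is precisely the conclusion of the proposition (the superscripts $i,j$ on the thresholds simply record which pair of nets they were extracted from). Equivalently, one can note that condition~(1) of Definition~\ref{a3} says that every element of $\mathcal{B}$ is a conet, in the sense of Doitchinov, of every element of $\mathcal{A}$, and read the conclusion off the Doitchinov conet definition applied to this single $U$.

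I do not expect a genuine obstacle here, since nothing beyond the $\tau$-cut axiom is being used; in particular the $\tau_p$- and $\tau_q$-properties of the nets play no role in the argument. The only point that deserves a line of care is the passage from the two-net ``finally'' symbol to an explicit pair of thresholds, which is exactly the natural extension of the single-variable definition of $\tau.$ and is made visually clear by Figure~3. The value of the proposition lies not in difficulty but in providing, once and for all, the fixed indices $a_U^i$ and $\beta_U^j$ that later proofs in Sections~3 and~4 can cite when manipulating elements of a $\tau$-cut.
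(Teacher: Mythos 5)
Your reading makes the proposition vacuous, and that is the sign that something has been missed. The content of Proposition \ref{a6} is not that thresholds exist at all --- that much is indeed just Definition \ref{a3}(1) unpacked --- but that they can be chosen \emph{uniformly}: the index $a_{_U}\!\!^i$ depends only on the net $(x_a\!\!^i)_{a\in A_i}$ and on $U$, and ${\beta}_{_U}\!\!^j$ only on $(y_{_\beta}\!\!^j)_{_{\beta\in B_j}}$ and on $U$, so that the \emph{same} thresholds work simultaneously for every pairing of an element of $\mathcal{A}$ with an element of $\mathcal{B}$ (the paper's proof ends with ``for each $i\in I$, $j\in J$'' inside the conclusion for exactly this reason). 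This is what the word ``fixed'' and the superscript $i$ rather than $(i,j)$ are recording, and it is the whole point of Remark \ref{a7}, which compares this uniformity to the $\mathcal{Q}$-property of quiet spaces. Unpacking $\tau.((y_{_\beta}\!\!^j)_\beta,(x_a\!\!^i)_a)\in U$ as you do yields thresholds that a priori depend on the \emph{pair} of nets, which is strictly weaker and would not support the later uses of the proposition.

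The step you explicitly dismiss --- ``the $\tau_{_p}$- and $\tau_{_q}$-properties of the nets play no role in the argument'' --- is precisely where the paper's proof lives. One chooses $V$ with $V\circ V\circ V\subseteq U$ and takes $a_{_V}\!\!^i$, ${\beta}_{_V}\!\!^j$ to be the extreme indices of $(x_a\!\!^i)$ as a $\tau_{_p}$-net and of $(y_{_\beta}\!\!^j)$ as a $\tau_{_q}$-net for $V$; these depend only on the individual nets. Given $a\ge a_{_V}\!\!^i$ and $\beta\ge{\beta}_{_V}\!\!^j$, pick $a'$ and $\beta'$ beyond both these indices and the (pair-dependent) thresholds furnished by $\tau.((y_{_\beta}\!\!^j)_\beta,(x_a\!\!^i)_a)\in V$; then $(y_{_\beta}\!\!^j,y_{_{\beta'}}\!\!^j)\in V$, $(y_{_{\beta'}}\!\!^j,x_{a'}\!\!^i)\in V$ and $(x_{a'}\!\!^i,x_a\!\!^i)\in V$ compose to give $(y_{_\beta}\!\!^j,x_a\!\!^i)\in U$. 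The pair-dependent thresholds are thereby absorbed into the quantifier over $a'$, $\beta'$, and only the net-dependent extreme indices survive as the advertised $a_{_U}\!\!^i$, ${\beta}_{_U}\!\!^j$. Your proposal omits this absorption argument entirely, so it does not establish the statement in the sense the paper needs it.
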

\begin{proof}Let $V,U\in {\mathcal{U}}$ such that $V\circ V\circ
V\subseteq U$. Let also $(x_a\!\!\!^i)_{a\in A_i}\in {\mathcal{A}}$,
$(y_{_{\beta}}\!\!^{^j})_{_{\beta\in B_j}}\in {\mathcal{B}}$ and
$a\!_{_{_V}}\!\!\!\!^i$, ${\beta}\!_{_V}\!\!\!^{^j}$ two indices
such that
 $(x_a^i,x_{a^{\prime}}^i)\in V$,
$(y_{_\beta},y_{_{\beta^{\prime}}}^j)\in V$ for each
$a,a^{\prime}\geq a\!_{_{_V}}\!\!\!\!^i$ and
$\beta,\beta^{\prime}\geq {\beta}\!_{_V}\!\!\!^{^j}$. Then from
$\tau.((y_{_{\beta}}\!\!^{^j})_{_{\beta}},(x_a\!\!\!^i)_a)\in V$ we
conclude that $(y_{_{\beta}}\!\!^{^j},x_a\!\!\!^i)\in U$ for each
$i\in I,j\in J, a\geq a\!_{_{_V}}\!\!\!\!^i$ and $\beta\geq
{\beta}\!_{_V}\!\!\!^{^j}$.
\end{proof}

\pspicture(0,0)(0,0)

\psline{-}(0.3,-.3)(5.1,-1.3)

\psline{-}(0.3,-1.8)(5.1,-1.3)

\Rput[ul](.9,-1.29){$x^i_{_{a_{_{_U}}}}$}

\qdisk(1,-1.4){1pt}

\qdisk(2.2,-1.5){1pt}

\Rput[ul](1.9,-1.2){$x^i_{{a^{^\prime}}}$}

\qdisk(1,-1.4){1pt}

\Rput[ul](9.9,-1.25){$y^j_{_{\beta_{_U}}}$}

\qdisk(9.93,-1.42){1pt}

\qdisk(8.7,-1.45){1pt}

\Rput[ul](8.5,-1.1){$y^i_{_{\beta^{\prime}}}$}

\qdisk(8.1,-1.45){1pt}

\Rput[ul](7.7,-1.1){$y^i_{_\beta}$}

\qdisk(2.9,-1.4){1pt}

\Rput[ul](2.8,-1.2){$x^i_{{a}}$}

\psline{-}(10.7,-.3)(5.7,-1.3)

\psline{-}(10.7,-1.8)(5.7,-1.3)

\psline{<-}(1.1,-2.4)(5.3,-2.4)

\Rput[ul](5.25,-2.4){${\rm U}$}

\psline{->}(5.8,-2.4)(10,-2.4)

\endpspicture
\par\bigskip\bigskip\bigskip\bigskip\bigskip\bigskip\bigskip\bigskip\bigskip\bigskip
\begin{center}
Figure 7
\end{center}
\par\bigskip\smallskip\par

\par\noindent
\begin{remark}\label{a7}{\rm The existence in the Proposition \ref{a6} fixed extreme
points for a given entourage $U$ and for all pairs of the
$\tau_{_p}$-nets and their $\tau_{_q}$-conets is a crucial property of the so
called {\it quiet spaces}. It is a result coming from the $\mathcal{Q}$-property
(the property of the quiet spaces)
and the proposition 12 of \cite{doi2} and it is the
basic reason of why these spaces may be completed in such a simple
way by the D-completion.}
\end{remark}

The following proposition is evident.

\begin{proposition}\label{a800}{\rm A $\tau$-cut is not a $\tau$-gap if and only if all the members of the two classes
converge to the same point.}
\end{proposition}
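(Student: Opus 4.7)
The plan is to observe that this proposition unpacks directly into the definitions of \emph{end point} and \emph{$\tau$-gap} given between Definition \ref{a3} and Notation \ref{a4}, so the proof is essentially a one-to-one translation of terminology in each direction. The only nontrivial content lurks in the converse implication, where we need to verify that having a common limit point really puts the singleton $\{x\}$ into both classes so that it qualifies as an end point in the sense defined.

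For the forward direction I would argue: if $(\mathcal{A}_\xi,\mathcal{B}_\xi)$ is not a $\tau$-gap, then by the very definition of $\tau$-gap it must have an end point $x\in X$. But the definition of end point says precisely that all $\tau_{_p}$-nets of $\mathcal{A}_\xi$ converge to $x$ in $\tau(\mathcal{U})$ and all $\tau_{_q}$-conets of $\mathcal{B}_\xi$ converge to $x$ in $\tau(\mathcal{U}^{-1})$, which is the desired conclusion.

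For the converse I would assume that every element of $\mathcal{A}_\xi$ converges to some $x$ with respect to $\tau(\mathcal{U})$ and every element of $\mathcal{B}_\xi$ converges to $x$ with respect to $\tau(\mathcal{U}^{-1})$, and show that $x$ is an end point. The constant net $\{x\}$ is trivially both a $\tau_{_p}$-net and a $\tau_{_q}$-net. For any $(x_a^i)_{a\in A_i}\in\mathcal{A}_\xi$ and any $U\in\mathcal{U}$, the convergence $x_a^i\to x$ in $\tau(\mathcal{U})$ means $(x,x_a^i)\in U$ eventually, so $\{x\}$ is a $\tau_{_q}$-conet of $(x_a^i)_a$; since this holds for every member of $\mathcal{A}_\xi$, condition (2) of Definition \ref{a3} (maximality of the second class) forces $\{x\}\in\mathcal{B}_\xi$. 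The symmetric argument using $\tau(\mathcal{U}^{-1})$-convergence of the members of $\mathcal{B}_\xi$ gives $\{x\}\in\mathcal{A}_\xi$. Hence $x$ is an end point and $(\mathcal{A}_\xi,\mathcal{B}_\xi)$ is not a $\tau$-gap.

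The only place where one has to do anything beyond reading off definitions is this closure step in the converse direction: one must see that the maximality condition (2) of Definition \ref{a3} is exactly what guarantees that the common limit, if it exists, automatically lies in both classes. This is the sole (minor) obstacle, and it explains why the author classifies the statement as evident.
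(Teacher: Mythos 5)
Your proof is correct and follows exactly the route the paper intends: the paper offers no proof (it declares the proposition evident), and your argument is the straightforward definitional unpacking, with the one substantive step — that the common limit's singleton $\{x\}$ lands in both classes via condition (2) of Definition \ref{a3} — being precisely the assertion the paper itself makes without proof in the paragraph following that definition. Nothing further is needed.
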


\begin{definition}\label{a8}{\rm We call $\tau$-{\it Cauchy net}
every $\tau_{_p}$-net or $\tau_{_q}$-conet of a $\tau$-cut.}
\end{definition}

\begin{definition}\label{a9}{\rm A quasi-uniform space is called $\tau$-{\it complete} if
all the $\tau$-Cauchy nets of a $\tau$-cut converge to the same point.}
\end{definition}

After the Proposition \ref{a800} an equivalent definition of the $\tau$-completeness holds:

\begin{definition}\label{a900}{\rm A quasi-uniform space is called $\tau$-{\it complete} if
its $\tau$-cuts are not $\tau$-gaps.}
\end{definition}

\par
\begin{definition}\label{a10}{\rm Let $(X,{\mathcal{U}})$ be a quasi-uniform space and let
$(x_{_a})_{_{a\in A}}$ and $(x_{_\beta})_{_{\beta\in B}}$ be two nets in $X$.
Given a $W\in {\mathcal{U}}$ we say that
$(x_{_a})_{_{a\in A}}$ and $(x_{_\beta})_{_{\beta\in B}}$
are
{\it left cofinal for $W$} if and only if
there are $\beta_{_W}\in B, a_{_W}\in A$ satisfying the
following property: for every $a\geq a_{_W}$ there exists
$\beta_{_a}>\beta_{_W}$ such that for every $\beta>\beta_{_a}$ there holds
$(x_{_\beta},x_{_a})\in W$ and for
every $\beta\geq \beta_{_W}$ there exists $a_{_\beta}>a_{_W}$ such that for
every $a>a_{_\beta}$ there holds $(x_{_a},x_{_\beta})\in W$.

We say that $(x_{_a})_{_{n\in A}}$ and $(x_{_\beta})_{_{\beta\in
B}}$ are {\it left cofinal}, if there exists $W_{_0}\in \mathcal{U}$ such that for each $W\in \mathcal{U}$ and $W\subseteq W_{_0}$,
$(x_{_a})_{_{a\in A}}$ and $(x_{_\beta})_{_{\beta\in B}}$
are
left cofinal
for $W$ (see fig 8).

Dually we define the {\it right cofinality} of
$(x_{_a})_{_{a\in A}}$ and $(x_{_\beta})_{_{\beta\in B}}$.}
\end{definition}

\begin{proposition}\label{a11}{\rm Let $(x_a)_{_{a\in A}}$ be a $\tau_{_p}$-net (resp.$(y_{_\beta})_{_{\beta\in B}}$ is
a $\tau_{_q}$-net) in a quasi-uniform space $(X,{\mathcal{U}})$ and
$(x_{_{a_i}})_{_{i\in I}}$ a $\tau_{_p}$-subnet (resp.
$(y_{_{\beta_j}})_{_{j\in J}}$ a $\tau_{_q}$-subnet) of it. Then
$(x_a)_{_{a\in A}}$ and $(x_{_{a_i}})_{_{i\in I}}$ (resp.
$(y_{_\beta})_{_{\beta\in B}}$ and $(y_{_{\beta_j}})_{_{j\in J}}$)
are left (resp. right) cofinal.}
\end{proposition}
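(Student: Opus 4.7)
I would reduce to the $\tau_{_p}$-case, since the $\tau_{_q}$-case is obtained by applying the same argument to $\mathcal{U}^{-1}$. Let $\phi:I\to A$ denote the cofinal indexing map of the subnet, so that $x_{_{a_{_i}}}=x_{_{\phi(i)}}$ and, for every $a\in A$, there exists $i_{_0}\in I$ with $\phi(i)\geq a$ for all $i\geq i_{_0}$. Fix any $W_{_0}\in\mathcal{U}$; the notion of left cofinality in Definition \ref{a10} demands the two-sided condition only for entourages $W\subseteq W_{_0}$, and the argument below will be uniform in $W$, so $W_{_0}$ may be chosen arbitrarily.

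For each such $W$, I would take $a_{_W}$ to be an extreme index of the ambient $\tau_{_p}$-net for $W$, giving $(x_{_{a'}},x_{_a})\in W$ whenever $a'\geq a\geq a_{_W}$, and I would use cofinality of $\phi$ to pick $i_{_W}\in I$ with $\phi(i)\geq a_{_W}$ for all $i\geq i_{_W}$. These two indices will serve as the thresholds $a_{_W}$ and $\beta_{_W}$ required by Definition \ref{a10}.

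With these thresholds in hand, verifying the two defining inequalities is a short index chase. For the first, given $a\geq a_{_W}$, cofinality of $\phi$ produces some $i_{_0}\in I$ with $\phi(i)\geq a$ for $i\geq i_{_0}$; using directedness of $I$, I pick $i_{_a}>i_{_W}$ with $i_{_a}\geq i_{_0}$, and then for every $i>i_{_a}$ the $\tau_{_p}$-property applied to $\phi(i)\geq a\geq a_{_W}$ yields $(x_{_{\phi(i)}},x_{_a})\in W$. For the second, given $i\geq i_{_W}$ (which already forces $\phi(i)\geq a_{_W}$), directedness of $A$ furnishes $a_{_i}>a_{_W}$ with $a_{_i}\geq\phi(i)$; for $a>a_{_i}$, the chain $a\geq a_{_i}\geq\phi(i)\geq a_{_W}$ together with the $\tau_{_p}$-property gives $(x_{_a},x_{_{\phi(i)}})\in W$. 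Since this argument is uniform in $W\subseteq W_{_0}$, the two nets are left cofinal.

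The main obstacle is quite mild: it is only the pedantic point that Definition \ref{a10} demands \emph{strict} inequalities $i_{_a}>i_{_W}$ and $a_{_i}>a_{_W}$, so one must slip in a small directedness step when choosing these thresholds rather than simply reading them off from cofinality. Beyond this bit of bookkeeping, the proof is a direct unwinding of the $\tau_{_p}$-condition of the ambient net together with the cofinality intrinsic to the subnet construction.
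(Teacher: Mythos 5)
Your argument is correct, and in fact the paper states Proposition \ref{a11} with no proof at all (the proof environment that follows Figure 8 belongs to Proposition \ref{a12}), so your direct unwinding of Definitions \ref{a1} and \ref{a10} via the cofinal indexing map of the subnet is precisely the routine verification the authors leave implicit. Two minor remarks: your proof never actually uses that the subnet is itself a $\tau_{_p}$-net (only cofinality of the indexing map and the $\tau_{_p}$-property of the ambient net are needed), and the strict-inequality issue you flag is indeed harmless bookkeeping.
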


\par\bigskip\par

\pspicture(0,0)(-1,-1)

\Rput[ul](-1.2,-.6){$(x_{_a})_{_{a\in A}}$}

\Rput[ul](.7,-.6){$x_{_{a_{_W}}}$}

\Rput[ul](2,-.6){$x_{_a}$}

\Rput[ul](-1.2,-1.5){$(x_{_\beta})_{_{\beta\in B}}$}

\Rput[ul](.7,-1.5){$x_{_{\beta_{_W}}}$}

\Rput[ul](2.3,-1.5){$x_{_{\beta_{_a}}}$}

\psline[linestyle=dotted]{-}(-1,-.9)(1.2,-.9)
\qdisk(1,-.9){1pt}

\qdisk(2.3,-.9){2pt}

\qdisk(1,-1.8){1pt}

\qdisk(2.55,-1.8){1pt}

\psline{<-}(2.5,-2)(3,-2)

\Rput[ul](2.9,-2.1){$x_{_\beta}$}

\psline{->}(3.4,-2)(4,-2)

\psline[linestyle=dotted]{->}(1.2,-.9)(4,-.9)

\psline[linestyle=dotted]{-}(-1,-1.8)(1.2,-1.8)

\psline[linestyle=dotted]{->}(1.2,-1.8)(4,-1.8)

\psline{<-}(1.02,-2.4)(2.2,-2.4)

\Rput[ul](2.15,-2.4){${\rm W}$}

\psline{->}(2.8,-2.4)(4,-2.4)

\psline[linestyle=dotted]{-}(-1,-.9)(1.2,-.9)
\qdisk(1,-.9){1pt}

\psline{<-}(8.2,-2)(8.9,-2)

\Rput[ul](8.9,-2.1){$x_{_a}$}

\psline{->}(9.5,-2)(10.1,-2)

\Rput[ul](4.9,-1.5){$(x_{_a})_{_{a\in A}}$}

\Rput[ul](6.8,-1.5){$x_{_{a_{_W}}}$}

\qdisk(7.1,-1.8){1pt}

\Rput[ul](8.1,-1.5){$x_{_{a_{_\beta}}}$}

\qdisk(8.25,-1.8){1pt}

\qdisk(8.1,-.9){2pt}

\Rput[ul](7.8,-.6){$x_{_\beta}$}

\qdisk(7,-.9){1pt}
\Rput[ul](6.5,-.6){$x_{_{\beta_{_W}}}$}

\Rput[ul](4.8,-.6){$(x_{_\beta})_{_{\beta\in B}}$}

\psline[linestyle=dotted]{-}(5,-.9)(6.2,-.9)

\psline[linestyle=dotted]{->}(6.2,-.9)(10.1,-.9)

\psline[linestyle=dotted]{-}(5,-1.8)(7.2,-1.8)

\psline[linestyle=dotted]{->}(7.2,-1.8)(10.1,-1.8)

\psline{<-}(7.02,-2.4)(8.2,-2.4)

\Rput[ul](8.15,-2.4){${\rm W}$}

\psline{->}(8.8,-2.4)(10.1,-2.4)

\Rput[ul](.4,-3.1){$(x_{_\beta},x_{_a})\in W$}

\Rput[ul](6.6,-3.1){$(x_{_a},x_{_\beta})\in W$}

\endpspicture
\par\bigskip\bigskip\bigskip\bigskip\bigskip\bigskip\bigskip
\begin{center}
Figure 8
\end{center}
\par\

\begin{proposition}\label{a12}{\rm In a quasi-uniform space two left cofinal $\tau_{_p}$-nets (resp. cofinal $\tau_{_q}$-conets)
have the same $\tau_{_q}$-conets (resp. $\tau_{_p}$-nets).}
\end{proposition}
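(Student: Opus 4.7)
The plan is to exploit the composition property $V\circ V\subseteq U$ for entourages, using left cofinality as a bridge between the two $\tau_{_p}$-nets. Since the definition of left cofinality is manifestly symmetric in the two nets, it suffices to prove one inclusion: if $(y_{_\gamma})_{_{\gamma\in C}}$ is a $\tau_{_q}$-conet of $(x_{_a})_{_{a\in A}}$, then it is also a $\tau_{_q}$-conet of $(x_{_\beta})_{_{\beta\in B}}$. The dual statement for cofinal $\tau_{_q}$-conets will follow by reversing entourages.

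Fix an arbitrary $U\in\mathcal{U}$. Choose $V\in\mathcal{U}$ with $V\circ V\subseteq U$, and shrink $V$ if necessary so that $V\subseteq W_{_0}$, where $W_{_0}$ is the entourage from Definition \ref{a10} witnessing left cofinality of $(x_{_a})$ and $(x_{_\beta})$. By the conet property applied to $(x_{_a})$, there exist $a_{_V}\in A$ and $\gamma_{_V}\in C$ with $(y_{_\gamma},x_{_a})\in V$ whenever $a\geq a_{_V}$ and $\gamma\geq\gamma_{_V}$. By left cofinality for $V$, there exist $a'_{_V}\in A$ and $\beta_{_V}\in B$ such that for every $\beta\geq\beta_{_V}$ one can find $a_{_\beta}>a'_{_V}$ with $(x_{_a},x_{_\beta})\in V$ for all $a>a_{_\beta}$.

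Now for any $\beta\geq\beta_{_V}$ and $\gamma\geq\gamma_{_V}$, use directedness of $A$ to pick some $a\in A$ with $a>a_{_\beta}$ and $a\geq a_{_V}$. Then simultaneously $(y_{_\gamma},x_{_a})\in V$ and $(x_{_a},x_{_\beta})\in V$, so $(y_{_\gamma},x_{_\beta})\in V\circ V\subseteq U$. Hence $(y_{_\gamma})$ is a $\tau_{_q}$-conet of $(x_{_\beta})$, as desired.

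The only delicate point is the order of quantifiers in the definition of left cofinality — the index $a_{_\beta}$ depends on $\beta$ — but this causes no trouble because, after fixing $\beta$, we only need to produce one suitable $a$ to bridge $y_{_\gamma}$ and $x_{_\beta}$ via two applications of $V$; we never need a uniform choice of $a$ over all $\beta$. The rest is routine, and the symmetric direction together with the dual statement for right cofinal $\tau_{_q}$-conets is obtained by interchanging the roles of the two nets and passing to $\mathcal{U}^{-1}$ respectively.
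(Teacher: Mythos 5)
Your proof is correct and follows essentially the same route as the paper's: fix $U$, take $V$ with $V\circ V\subseteq U$, use the conet relation to place $(y_{_\gamma},x_{_a})\in V$, use the second half of the left-cofinality condition to place $(x_{_a},x_{_\beta})\in V$ for a suitably large common index $a$, and compose. Your explicit remark on the quantifier order and on the symmetry of Definition \ref{a10} only makes explicit what the paper leaves implicit, so there is nothing to add.
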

\begin{proof} Let $(x_{_a})_{_{a\in A}}$, $(t_{_k})_{_{k\in K}}$ be two left cofinal $\tau_{_p}$-nets and
$(y_{_\beta})_{_{\beta\in B}}$ is a $\tau_{_q}$-conet of
$(x_{_a})_{_{a\in A}}$. Let also $U\in {\mathcal{U}}$ and a $W\in {\mathcal{U}}$ with $W\circ W\subseteq U$. Then there exist $a_{_W}\in A$ and
$\beta_{_W} \in B$ such that $(y_{_\beta},x_a)\in W$ for $a\geq
a_{_W}$ and $\beta\geq \beta_{_W}$. On the other hand, because of
left cofinality of $(x_{_a})_{_{a\in A}}$ and $(t_{_k})_{_{k\in
K}}$, there are $a^{\prime}_{_W}\in A$ and $k_{_W}\in K$ with the
property: for each $k>k_{_W}$ there exists $a_k>a^{\prime}_{_W}$
such that for each $a>a_k$ we have $(x_a,t_k)\in W$. Hence,
$(y_{_\beta}, t_{_k})\in W\circ W\subseteq U$,
 whenever
$\beta\geq \beta_{_W}$ and $k\geq k_{_W}$.

Likewise we prove the result for the right cofinal $\tau_{_q}$-nets.
\end{proof}

\par
\begin{proposition}\label{a13}{\rm In a quasi-uniform space $(X,{\mathcal{U}})$ two left cofinal $\tau_{_p}$-nets (resp. right cofinal $\tau_{_q}$-conets)
have the same limit points.}
\end{proposition}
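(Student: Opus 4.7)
The plan is to argue directly from the definition of left cofinality plus a single triangle composition via some $W$ with $W\circ W\subseteq U$. The goal is to show that if $(x_a)_{a\in A}$ and $(t_k)_{k\in K}$ are left cofinal $\tau_{_p}$-nets and $(x_a)_a\to p$ in $\tau(\mathcal{U})$, then also $(t_k)_k\to p$ in $\tau(\mathcal{U})$ (recall the convention after Definition \ref{a3} that convergence of a $\tau_{_p}$-net is always with respect to $\tau(\mathcal{U})$). By symmetry the converse then follows, giving equality of the limit sets.

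First I would fix an arbitrary $U\in\mathcal{U}$ and choose $W\in\mathcal{U}$ with $W\circ W\subseteq U$ and $W\subseteq W_{_0}$, where $W_{_0}$ is the entourage provided by left cofinality. Then I would pull out the relevant data:
\begin{itemize}
\item by convergence $(x_a)\to p$, an index $a_{_1}\in A$ with $(p,x_a)\in W$ for all $a\geq a_{_1}$;
\item by left cofinality of $(x_a)$ and $(t_k)$ for $W$, indices $a_{_W}\in A$, $k_{_W}\in K$ such that for every $k\geq k_{_W}$ there exists $a_{_k}>a_{_W}$ with $(x_a,t_k)\in W$ whenever $a>a_{_k}$.
\end{itemize}
Fixing an arbitrary $k\geq k_{_W}$ and using directedness of $A$ to pick $a$ above both $a_{_1}$ and $a_{_k}$, the composition $(p,x_a)\in W$ and $(x_a,t_k)\in W$ gives $(p,t_k)\in W\circ W\subseteq U$. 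Since $U$ was arbitrary, $(t_k)\to p$ in $\tau(\mathcal{U})$.

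The reverse implication uses the \emph{other} clause in the definition of left cofinality (for each $a\geq a_{_W}$ one has $(t_k,x_a)\in W$ for $k$ large), composed with $(p,t_k)\in W$ from $(t_k)\to p$, to conclude $(p,x_a)\in U$. The dual statement for right cofinal $\tau_{_q}$-conets is obtained by replacing $\mathcal{U}$ with $\mathcal{U}^{-1}$ and reading the inclusions in reverse order. I do not expect a genuine obstacle here: the only subtlety is bookkeeping, namely matching the correct clause of Definition \ref{a10} with the correct side of the composition needed to get $(p,t_k)\in U$ rather than $(t_k,p)\in U$; once the asymmetry is respected, the argument is a one-step $W\circ W\subseteq U$ computation as in Proposition \ref{a12}.
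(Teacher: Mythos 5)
Your argument is correct: the paper actually states Proposition \ref{a13} without proof, and your one-step $W\circ W\subseteq U$ composition, using the appropriate clause of Definition \ref{a10} on the appropriate side of the chain, is exactly the argument that the proof of Proposition \ref{a12} models and that the authors evidently intend here. The only point worth making explicit in a write-up is the choice of $W\subseteq W_{_0}$ with $W\circ W\subseteq U$ (take $W=V\cap W_{_0}$ for any $V$ with $V\circ V\subseteq U$), which you have already handled.
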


\par

\bigskip
\par\noindent
\begin{remark}\label{a14}
\par\noindent
\begin{enumerate}
\item
{\rm Without loss of generality, we may suppose that for $U\subseteq
V$, it is $a_{_U}\!\!\!\!^i\geq a_{_V}\!\!\!\!^i$ and
${\beta}_{_U}\!\!\!^{^j}\geq {\beta}_{_V}\!\!\!\!^{^i}$\ for the
corresponded extreme points of $(x_a\!\!\!^i)_{a\in A_i}$ and
$(y_{_{\beta}}\!\!^{^j})_{_{\beta\in B_j}}$ respectively.
\item
Given one or more $\tau_{_p}$-nets of a $\tau$-cut we can construct the
$\tau$-cut taking all the $\tau_{_q}$-conets of the given and after that,
all the $\tau_{_p}$-nets of these $\tau_{_q}$-conets. The procedure may be
reversed considering firstly the $\tau_{_q}$-conets. In the present text
we preserve the former procedure.}
\end{enumerate}
\end{remark}

\section{The $\tau$-completion procedure}

\par
The pair $(X,{\mathcal{U}})$ always presents a $T_{_0}$ quasi-uniform
space, $\overline{X}$ is the set of all $\tau$-cuts in $X$ and
$\phi$ the set-theoretical embedding of $X$ in $\overline{X}$
described above in the notation \ref{a4} and the remark \ref{a5}, the {\it canonical embedding of}
$X$ {\it into} $\overline{X}$ as we have called it. We shall define
a quasi-uniformity on $\overline{X}$.

\begin{definition}\label{a15}{\rm For any $\tau$-cut $\xi^{^\prime}=({{\mathcal{A}}} _{\xi
^{^\prime}},{{\mathcal{B}}}_{\xi^{^\prime}})\in \overline{X}$ and  any
$W\in {\mathcal{U}}$ we define $\overline{W}(\xi^{^\prime})$ as the set
of all $\xi^{^{\prime\prime}}=({{\mathcal{A}}}_{\xi ^{^{\prime\prime}}},
{{\mathcal{B}}}_{\xi^{^{\prime\prime}}})\in \overline{X}$ which fulfil the
following}:
\begin{enumerate}
\item
{\rm exclusively} ${\mathcal{A}} _{\xi ^{^{\prime\prime}}}\subseteq{\mathcal{A}} _{\xi
^{^{\prime}}}$ {\rm or}
\item
{\rm there is a $\tau_{_p}$-net $(x_{_\beta})_{_{\beta\in B}}\in  A _{\xi
^{^\prime}}$ such that
$\tau.((x_{_\beta})_{_\beta},(x_{_a})_{_a})\in W$ for every
$(x_{_a})_{_{a\in A}}\in {\mathcal{A}} _{\xi ^{^{\prime\prime}}}$ (see fig. 9).}
\end{enumerate}
\end{definition}

\par
We put $\overline{\mathcal{U}}=\{ \overline{W}: W \in {\mathcal{U}} \}$.

\par\smallskip\par

Let ${\mathcal{U}}_{_0}$ be again the base of the quasi-uniformity the
referred of the beginning of the paragraph 2. In the sequel, to every
entourage in $(X,{\mathcal{U}})$, say $U\in {\mathcal{U}}$, the Definition
\ref{a15}
corresponds a subset of $\overline{X}\times \overline{X}$, that is
an \textquotedblleft entourage\textquotedblright\ in
$(\overline{X},\overline{{\mathcal{U}}})$; we symbolize this subset by
$\overline{U}$, that is by the same letter as in $X$ putting a bar.

\par\bigskip\bigskip\bigskip\bigskip\par

\pspicture(0,0)(0,0)

\Rput[ul](3,-.6){${{\mathcal{A}}}_{\xi ^{^{\prime}}}$}

\Rput[ul](8,-.6){${{\mathcal{B}}}_{\xi ^{^{\prime}}}$}

\Rput[ul](.8,-.7){${{\mathcal{A}}}_{\xi ^{^{\prime\prime}}}$}

\Rput[ul](5,-.6){${{\mathcal{B}}}_{\xi ^{^{\prime\prime}}}$}

\psline{-}(0.3,-.1)(5.1,-1.3)

\psline{-}(0.3,-2)(5.1,-1.3)

\psline{-}(0.3,-.8)(3.1,-1.3)

\psline{-}(0.3,-1.4)(3.1,-1.3)

\psline{-}(10.7,0.2)(3.5,-1.3)

\psline{-}(10.7,-2.3)(3.5,-1.3)

\psline{->}(3.35,-1.7)(3.35,-1)

\psline{->}(5.5,-1.7)(5.5,-1)

\psline{-}(10.7,-.5)(5.8,-1.3)

\psline{-}(10.7,-1.5)(5.8,-1.3)

\Rput[ul](2.8,-1.05)
{${\xi^{\prime\prime}}$}

\Rput[ul](5.55,-1.05)
{${\xi^{\prime}}$}

\Rput[ul](4.5,-3){${{\mathcal{A}}}_{\xi ^{^{\prime\prime}}}\subseteq {{\mathcal{A}}}_{\xi ^{^{\prime}}}$}

\endpspicture
\par
\bigskip\bigskip\bigskip\bigskip\bigskip\bigskip\bigskip\bigskip\bigskip\bigskip
\par\noindent
or
\par\bigskip\bigskip\par\noindent

\par\noindent

\pspicture(0,0)(0,0)

\Rput[ul](2.5,-.1){${\mathcal{A}}_{_{\xi^{^{\prime\prime}}}}$}

\psline{-}(0.5,-.0)(5.1,-.7)

\psline{-}(0.5,-1)(5.1,-.7)

\qdisk(1,-.6){1pt}

\qdisk(1,-1.3){1pt}

\qdisk(1,-2){1pt}

\qdisk(1.7,-.25){1pt}

\qdisk(1.9,-.3){1pt}

\qdisk(2.1,-.5){1pt}

\qdisk(2.3,-.7){1pt}

\qdisk(2.5,-.7){1pt}

\qdisk(2.7,-.5){1pt}

\qdisk(2.9,-.5){1pt}

\qdisk(3.1,-.6){1pt}

\qdisk(3.3,-.7){1pt}

\qdisk(3.5,-.6){1pt}

\qdisk(3.7,-.7){1pt}

\psline[linestyle=dotted]{-}(3.9,-.7)(5.1,-.7)

\psline[linestyle=dotted]{-}(3.9,-1.34)(5.1,-1.3)

\psline[linestyle=dotted]{-}(3.9,-1.87)(5.1,-1.8)

\qdisk(1.7,-1.25){1pt}

\qdisk(1.9,-1.3){1pt}

\qdisk(2.1,-1.2){1pt}

\qdisk(2.3,-1.1){1pt}

\qdisk(2.5,-1.2){1pt}

\qdisk(2.7,-1.3){1pt}

\qdisk(2.9,-1.4){1pt}

\qdisk(3.1,-1.35){1pt}

\qdisk(3.3,-1.37){1pt}

\qdisk(3.5,-1.31){1pt}

\qdisk(3.7,-1.34){1pt}

\qdisk(1.6,-1.9){1pt}

\qdisk(1.9,-2.1){1pt}

\qdisk(2.1,-1.8){1pt}

\qdisk(2.3,-1.9){1pt}

\qdisk(2.5,-2){1pt}

\qdisk(2.7,-1.78){1pt}

\qdisk(2.9,-1.8){1pt}

\qdisk(3.1,-1.85){1pt}

\qdisk(3.3,-1.89){1pt}

\qdisk(3.5,-1.82){1pt}

\qdisk(3.7,-1.87){1pt}

\psline[linestyle=dotted]{-}(3.9,-1.34)(5.1,-1.3)

\psline[linestyle=dotted]{-}(3.9,-1.87)(5.1,-1.8)

\psline{-}(0.5,-.7)(5.1,-1.3)

\psline{-}(0.5,-1.7)(5.1,-1.3)

\Rput[ul](.9,-.5){$x^i_{_{a}}$}

\Rput[ul](.9,-1.29){$x^j_{_{a}}$}

\Rput[ul](.9,-1.95){$x^k_{_{a}}$}

\psline{-}(0.5,-1.6)(5.1,-1.8)

\psline{-}(0.5,-2.4)(5.1,-1.8)

\psline{<-}(1.1,-2.6)(5.3,-2.6)

\Rput[ul](5.25,-2.6){${\rm W}$}

\psline{->}(5.8,-2.6)(10,-2.6)

\Rput[ul](6.7,-.8){$(x_{_\beta})_{_{\beta\in B}}\in {\mathcal{A}}_{_{\xi^{^{\prime}}}}$}

\qdisk(8.7,-1.2){1pt}
\qdisk(8.5,-1.36){1pt}
\qdisk(8.3,-1.42){1pt}

\qdisk(8.1,-1.38){1pt}

\qdisk(7.9,-1.28){1pt}

\qdisk(7.7,-1.35){1pt}

\qdisk(7.5,-1.28){1pt}

\qdisk(7.3,-1.26){1pt}

\qdisk(7.1,-1.4){1pt}

\qdisk(6.9,-1.5){1pt}

\qdisk(6.7,-1.45){1pt}
\qdisk(6.5,-1.5){1pt}

\psline[linestyle=dotted]{->}(8.7,-1.22)(9.9,-1.22)

\endpspicture

\par\bigskip\bigskip\bigskip\bigskip\bigskip\bigskip\bigskip\bigskip\bigskip
\begin{center}
Figure 9
\end{center}
\par\bigskip\smallskip\par

\par
The following theorem ensures the existence of a topology in $\overline{X}$.

\begin{theorem}\label{a16}{\rm The family $\overline{{\mathcal{U}}}=\{\overline{W}\vert W\in {\mathcal{U}}_{_0} \}$
is a base
for a quasi-uniformity on $\overline{X}$. We thus define a new
quasi-uniform space $(\overline{X},\overline{\mathcal{U}})$.}
\end{theorem}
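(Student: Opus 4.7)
The plan is to verify the three defining axioms of a quasi-uniformity base on $\overline{X}$. The first axiom, that each $\overline{W}$ contains the diagonal $\Delta(\overline{X})$, follows at once from alternative (1) of Definition \ref{a15}: the trivial inclusion $\mathcal{A}_\xi \subseteq \mathcal{A}_\xi$ places $(\xi,\xi)$ in $\overline{W}$ for every $\xi \in \overline{X}$ and every $W \in \mathcal{U}_0$. The second axiom, closure under refinement, reduces to the fact that $\mathcal{U}_0$ is itself a base for $\mathcal{U}$: given $\overline{W_1},\overline{W_2}\in\overline{\mathcal{U}}$, choose $W\in\mathcal{U}_0$ with $W\subseteq W_1\cap W_2$; then alternative (1) is independent of the entourage and alternative (2) is monotone in it, so $\overline{W}\subseteq \overline{W_1}\cap \overline{W_2}$.

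The central task is the composition axiom: for each $W\in\mathcal{U}_0$ exhibit $\overline{V}\in\overline{\mathcal{U}}$ with $\overline{V}\circ\overline{V}\subseteq\overline{W}$. I would fix $V\in\mathcal{U}_0$ with $V\circ V\subseteq W$ and argue by a four-way case analysis according to which of alternatives (1), (2) witnesses each of $(\xi,\xi')$ and $(\xi',\xi'')$ in $\overline{V}$. When at least one pair uses alternative (1), the corresponding class inclusion ($\mathcal{A}_{\xi'}\subseteq\mathcal{A}_\xi$ or $\mathcal{A}_{\xi''}\subseteq\mathcal{A}_{\xi'}$) transports the other alternative verbatim to the composite pair $(\xi,\xi'')$ with entourage $V\subseteq W$; the cases (1)+(1), (1)+(2), (2)+(1) are thus essentially bookkeeping.

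The decisive case is (2)+(2). Here I would take the $\tau_p$-net $(x_\beta)_{\beta\in B}\in\mathcal{A}_\xi$ supplied by $(\xi,\xi')\in\overline{V}$, which satisfies $\tau.((x_\beta)_\beta,(z_\gamma)_\gamma)\in V$ for every $(z_\gamma)\in\mathcal{A}_{\xi'}$, together with the $\tau_p$-net $(z_\gamma)_{\gamma\in\Gamma}\in\mathcal{A}_{\xi'}$ supplied by $(\xi',\xi'')\in\overline{V}$, which satisfies $\tau.((z_\gamma)_\gamma,(u_a)_a)\in V$ for every $(u_a)\in\mathcal{A}_{\xi''}$. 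Applying the first witness to the specific choice $(z_\gamma)\in\mathcal{A}_{\xi'}$ yields extreme indices $\beta_V,\gamma_V$ beyond which $(x_\beta,z_\gamma)\in V$; the second witness yields, for each $(u_a)\in\mathcal{A}_{\xi''}$, extreme indices $\gamma_V',a_V$ beyond which $(z_\gamma,u_a)\in V$. Choosing a single $\gamma_0\in\Gamma$ with $\gamma_0\geq\gamma_V$ and $\gamma_0\geq\gamma_V'$ (possible since $\Gamma$ is directed) and composing gives $(x_\beta,u_a)\in V\circ V\subseteq W$ for all $\beta\geq\beta_V$ and $a\geq a_V$; hence $\tau.((x_\beta)_\beta,(u_a)_a)\in W$, which verifies alternative (2) for $(\xi,\xi'')$ with entourage $W$.

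The main obstacle is exactly this case (2)+(2): the two witnesses are a priori unrelated $\tau_p$-nets living in different classes, and one must recognize that the intermediate net $(z_\gamma)$, being itself a member of $\mathcal{A}_{\xi'}$, simultaneously plays the role of a legitimate ``target'' for the first $\overline{V}$-witness and of the ``source'' for the second, so that a single intermediate index $\gamma_0$ suffices to execute the $V\circ V$-triangulation uniformly in $(u_a)\in\mathcal{A}_{\xi''}$.
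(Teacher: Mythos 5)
Your proposal is correct and follows essentially the same route as the paper: the diagonal and filter-base axioms are handled by the same trivial observations, and for the composition axiom the paper likewise isolates your case (2)+(2) as ``the only interesting case'' and resolves it by exactly your triangulation --- the witness net for the first pair targets \emph{every} net of the intermediate class, in particular the witness net for the second pair, giving $\tau.((x_\beta)_\beta,(u_a)_a)\in V\circ V\subseteq W$. Your version merely makes explicit the index bookkeeping (the common $\gamma_0$) and the three mixed cases that the paper dismisses without comment.
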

\begin{proof} If  $U, V$ in  ${\mathcal{U}}_{_0}$ and $U\subseteq V$, then
$\overline{U}\subseteq \overline{V}$. In fact; if
$(\xi^{\prime},\xi^{\prime\prime})\in \overline{U}$ and
${{\mathcal{A}}}_{_{\xi^{\prime\prime}}}\subseteq {{\mathcal{A}}}_{_{\xi^{\prime}}}$ the
relation is evident. Otherwise, if
$(\xi^{\prime},\xi^{\prime\prime})\in \overline{U}$, there is a
$\tau_{_p}$-net $(x_{_\beta})_{_{\beta\in B}}$ of $\xi^{\prime}$ such
that $\tau.((x_{_\beta})_{_{\beta}},(x_{_a})_{_a})\in U$ for every
$(x_{_a})_{_{a\in A}}\in {{\mathcal{A}}}_{_{\xi^{\prime\prime}}}$, hence
$\tau.((x_{_\beta})_{_{\beta}},(x_{_a})_{_a})\in V$ and
 $(\xi^{\prime},\xi^{\prime\prime})\in \overline{V}$. From this result, we conclude
 that $\overline{U\cap V}\subseteq \overline{U}\cap \overline{V}$ and the
 considered family is a filter. We also have - by definition- that
 for every $\xi\in \overline{X}$ and every $\overline{W}$,
 $(\xi,\xi)\in \overline{W}$.
\par

Let $U, W$ be in ${\mathcal{U}}_{_0}$ such that $W\circ W\subseteq U$,
$(\xi^{\prime},\xi)\in \overline{W}$ and
$(\xi,\xi^{\prime\prime})\in \overline{W}$. We will prove that
$(\xi^{\prime},\xi^{\prime\prime})\in \overline{U}$. The only
interesting case is if ${{\mathcal{A}}}_{\xi}\nsubseteq {{\mathcal{A}}}_{\xi^\prime}$ and ${{\mathcal{A}}}_{\xi^{\prime\prime}}\nsubseteq {{\mathcal{A}}}_{\xi}$. Then, there is a $\tau_{_p}$-net $(x_a)_{a\in A}\in {{\mathcal{A}}}_{\xi^\prime},$ whose the elements of a final segment fulfil
$\tau.((x_a)_a, (z_\gamma)_\gamma)\in W$ for every
$(z_\gamma)_{\gamma\in\Gamma}\in {{\mathcal{A}}}_{\xi}$. In a similar
process, there is a final segment of a $\tau_{_p}$-net
$(z_{\gamma})_{{\gamma}\in \Gamma}\in {{\mathcal{A}}}_{\xi}$ such that
$\tau.((z_{\gamma})_{\gamma},(y _\beta)_{\beta})\in W,$  for every
$(x_{_\beta})_{_{\beta\in B}} \in {{\mathcal{A}}}_{\xi^{\prime\prime}}.$ In
conclusion
 $\tau.((x_a)_a,(x_{_{\beta}})_{_{\beta}})\in W\circ
W\subseteq U$ and this completes the proof.
\end{proof}
\bigskip
\par\noindent

\begin{theorem}\label{a17}{\rm For each $U\in {\mathcal{U}}_{_0}$,
$(\phi(a),\phi(b))\in \overline{U}$ if and only if $(a,b)\in U$.}
\end{theorem}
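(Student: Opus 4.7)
The proof splits into two implications, both leveraging that $U \in \mathcal{U}_0$ is $\tau(\mathcal{U}^{-1}) \times \tau(\mathcal{U})$-open.

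For sufficiency (assume $(a, b) \in U$), I would verify clause (2) of Definition \ref{a15} by exhibiting the constant $\tau_p$-net $\{a\}_{\beta \in B}$, which lies in $\mathcal{A}_{\phi(a)}$ since its only limit is $a$. For an arbitrary $(x_c)_{c \in A} \in \mathcal{A}_{\phi(b)}$, the requirement $\tau.((a)_\beta, (x_c)_c) \in U$ reduces to $(a, x_c) \in U$ eventually. The section $U(a) = \{y : (a, y) \in U\}$ is $\tau(\mathcal{U})$-open (by the openness of $U$) and contains $b$, so $\tau(\mathcal{U})$-convergence $x_c \to b$ gives $x_c \in U(a)$ eventually, as required.

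For necessity (assume $(\phi(a), \phi(b)) \in \overline{U}$), I would split into the two clauses of Definition \ref{a15}. In clause (1), $\mathcal{A}_{\phi(b)} \subseteq \mathcal{A}_{\phi(a)}$; since $\{b\} \in \mathcal{A}_{\phi(b)}$ trivially, it lies in $\mathcal{A}_{\phi(a)}$ and so converges to $a$ in $\tau(\mathcal{U})$, forcing $b \in V(a)$ for every $V \in \mathcal{U}$, in particular $(a, b) \in U$. In clause (2), some $(x_\beta) \in \mathcal{A}_{\phi(a)}$ satisfies $\tau.((x_\beta)_\beta, (x_c)_c) \in U$ for every $(x_c) \in \mathcal{A}_{\phi(b)}$; applied to $(x_c) = \{b\}$ this yields $\beta_0^{*}$ with $(x_\beta, b) \in U$ for $\beta \geq \beta_0^{*}$. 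To conclude $(a, b) \in U$, I combine three ingredients: the openness of $U$ at $(x_{\beta_0}, b)$ supplies $W_1 \in \mathcal{U}_0$ with $W_1^{-1}(x_{\beta_0}) \times \{b\} \subseteq U$; the $\tau_p$-property of $(x_\beta)$ provides $\beta_W$ with $(x_{\beta'}, x_\beta) \in W$ whenever $\beta' \geq \beta \geq \beta_W$; and $\tau(\mathcal{U})$-convergence $(x_\beta) \to a$ provides $\beta_W'$ with $(a, x_\beta) \in W$ for $\beta \geq \beta_W'$. Choosing $W \in \mathcal{U}_0$ with $W \circ W \subseteq W_1$ and taking a common index $\beta' \geq \max(\beta_0, \beta_W, \beta_W')$ gives $(a, x_{\beta'}) \in W$ and $(x_{\beta'}, x_{\beta_0}) \in W$, hence $(a, x_{\beta_0}) \in W \circ W \subseteq W_1$, and the defining inclusion yields $(a, b) \in U$.

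The main technical obstacle is the consistent ordering of choices in clause (2): the entourage $W_1$ depends on the choice of $\beta_0$, but the $\tau_p$-extreme index $\beta_W$ depends on $W$, which in turn depends on $W_1$. This circular dependence is resolved by a cofinal selection that first exploits the refinement base structure of $\mathcal{U}_0$ and the $\tau_p$-extreme indices for a descending family of entourages, and only then picks $\beta_0$ large enough to accommodate the resulting $W_1 = W_1(\beta_0)$. An alternative (and arguably cleaner) route is to additionally invoke clause (2) against non-constant $(x_c) \in \mathcal{A}_{\phi(b)}$, which tightens the conclusion: testing only with $\{b\}$ would give $(a, b)$ in the $\tau$-closure of $U$, whereas the full quantification over all $(x_c) \to b$ pins $(a, b)$ strictly inside $U$, as concrete examples such as the Sorgenfrey line already illustrate.
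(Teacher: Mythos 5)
Your sufficiency argument and your handling of clause (1) of Definition \ref{a15} are correct and essentially identical to the paper's (the paper actually skips the clause-(1) case, which you dispose of properly). The problem is the clause-(2) half of the necessity direction. The implication your main chain actually draws on is: if $(x_\beta)$ is a $\tau_{_p}$-net $\tau(\mathcal{U})$-converging to $a$ with $(x_\beta,b)\in U$ eventually, then $(a,b)\in U$. That implication is false, so the circular dependence you flag is not a bookkeeping nuisance that ``a cofinal selection'' can untangle --- it is unresolvable from those premises. Concretely, take the Sorgenfrey quasi-uniformity on $\mathbb{R}$ with basic entourages $U_\epsilon=\{(x,y)\,:\,0\le y-x<\epsilon\}$ (these are $\tau(\mathcal{U}^{-1})\times\tau(\mathcal{U})$-open), $a=0$, $b=\epsilon$, $x_n=1/n$: then $(x_n,b)\in U_\epsilon$ for all large $n$ and $(x_n)$ is a $\tau_{_p}$-net converging to $0$, yet $(0,\epsilon)\notin U_\epsilon$. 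In your notation the two demands are jointly unsatisfiable whenever $(a,b)\notin U$: from $(a,x_{\beta_0})\in W_1$ you get $a\in W_1^{-1}(x_{\beta_0})$, and $b\in W_1(b)$ always, so $W_1^{-1}(x_{\beta_0})\times W_1(b)\subseteq U$ would already force $(a,b)\in U$; hence no choice of $\beta_0$, $W$, $W_1$ can work in the example above, no matter how the selections are ordered.

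The only viable route is the one you mention in your closing sentence and do not carry out: you must use the full quantification in clause (2) over \emph{every} member of ${\mathcal{A}}_{_{\phi(b)}}$, not just the constant net $\{b\}$. (In the Sorgenfrey example it is precisely the net $\epsilon+1/m$ that witnesses $(\phi(0),\phi(\epsilon))\notin\overline{U_\epsilon}$, so the theorem survives --- but only because of those extra test nets.) You should also be aware that the paper's own proof of this direction has the same shape as your primary argument: it tests only against $b$, asserts a single $W$ with $W^{-1}(x_a)\times W(b)\subseteq U$ for a whole final segment, and then invokes $a\in W^{-1}(x_a)$ for a $W$ that depends on $x_a$. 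So you have correctly isolated a real difficulty rather than missed a trick in the paper; but as written neither your primary route nor your unexecuted alternative constitutes a complete proof, and the proposal needs to be finished along the ``all conets of $b$'' line.
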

\begin{proof}Let $U\in {\mathcal{U}}_{_0}$ and $(a,b)\in U$. Then, there is a
$W\in {\mathcal{U}}_{_0}$ such that $W^{-1}(a)\times W(b)\subseteq U$.
Thus, for every $\tau_{_p}$-net $(x_{_\beta})_{_{\beta\in B}}$ converging
to $b$, we have $\tau.(a,(x_{_\beta})_{_\beta})\in U$ and since $a$
is a $\tau_{_p}$-net of $\phi(a)$, we conclude that $(\phi(a),\phi(b))\in
\overline{U}$.
\par
Conversely: let $(\phi(a),\phi(b))\in \overline{U}$. Then there is a
$\tau_{_p}$-net, say $(x_a)_{a\in A}\in {{\mathcal{A}}}_{_{\phi(a)}}$, such that for every
$(y_{_{\beta}}\!\!^{^j})_{_{\beta\in B_j}}\in {{\mathcal{A}}}_{_{\phi(b)}}$
there holds $\tau.((x_{a})_a,(y_{_{\beta}}\!\!^{^j})_{_\beta})\in U$.
Since $b$ is a $\tau_{_p}$-net of $\phi(b)$ we have
$\tau.((x_{_a})_{_a},b)\in U$. Since $U\in {\mathcal{U}}_{_0}$ there is a
$W\in {\mathcal{U}}_{_0}$ such that $W^{-1}(x_{a})\times W(b)\subseteq
U$, for all the elements $x_{a}$ of a final segment of
$(x_{a})_{{a\in A}}$. And since $(x_{a})_{{a\in A}}$ converges
to $a$, $a\in W^{-1}(x_a)$ and finally $(a,b)\in U$.
\end{proof}

\par
\bigskip
\par\noindent
We also have the following:
\par\noindent
\begin{proposition}\label{a18}{\rm  If $(x_a)_{a\in A}$ is a $\tau_{_p}$-net of a $\xi\in
\overline{X}$, then $\lim\limits_a \phi(x_a)=\xi$. Dually, if
$(y_{_\beta})_{_{\beta\in B}}$ is a $\tau_{_q}$-conet of a $\xi\in
\overline{X}$, then $\lim\limits_\beta
(\phi(y_{_\beta}),\xi)=0$.}
\end{proposition}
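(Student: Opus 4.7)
The plan is to unfold the definition of convergence: to show $\lim_a\phi(x_a)=\xi$ in $\tau(\overline{\mathcal{U}})$ it suffices to show, for every basic entourage $\overline{W}$ with $W\in\mathcal{U}_0$, that $(\xi,\phi(x_a))\in\overline{W}$ eventually in $a$. Dually, $\lim_\beta(\phi(y_\beta),\xi)=0$ unfolds to: for every such $W$ there exists $\beta_W$ with $(\phi(y_\beta),\xi)\in\overline{W}$ whenever $\beta\geq\beta_W$. In both cases we are trying to produce the witness required by clause (2) of Definition \ref{a15}.

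For the first assertion, I would fix $W\in\mathcal{U}_0$, choose $V\in\mathcal{U}_0$ with $V\circ V\subseteq W$, and use the hypothesis that $(x_a)_{a\in A}$ is a $\tau_{_p}$-net to produce $a_V\in A$ with $(x_{a'},x_a)\in V$ for all $a'\geq a\geq a_V$. For any $a\geq a_V$ the witnessing $\tau_{_p}$-net in $\mathcal{A}_\xi$ will be $(x_a)_{a\in A}$ itself. Given an arbitrary $(z_\gamma)_{\gamma\in\Gamma}\in\mathcal{A}_{\phi(x_a)}$ this net $\tau(\mathcal{U})$-converges to $x_a$; since $V\in\mathcal{U}_0$, the slice $V(x_a)$ is $\tau(\mathcal{U})$-open, so there is $\gamma^*$ with $(x_a,z_\gamma)\in V$ for $\gamma\geq\gamma^*$. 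Composing with $(x_{a'},x_a)\in V$ for $a'\geq a$ yields $(x_{a'},z_\gamma)\in V\circ V\subseteq W$, i.e.\ $\tau.((x_a)_a,(z_\gamma)_\gamma)\in W$. Thus $(\xi,\phi(x_a))\in\overline{W}$ for every $a\geq a_V$.

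For the dual statement the argument is different: rather than producing compositions, I would invoke Proposition \ref{a6} applied to the fixed $\tau_{_q}$-conet $(y_\beta)_{\beta\in B}$. This gives a \emph{single} index $\beta_W\in B$, depending only on $W$ and on $(y_\beta)$ but not on any particular $\tau_{_p}$-net of $\mathcal{A}_\xi$, such that for every $(x^i_a)\in\mathcal{A}_\xi$ there exists $a^i_W$ with $(y_\beta,x^i_a)\in W$ whenever $\beta\geq\beta_W$ and $a\geq a^i_W$. Fixing any $\beta\geq\beta_W$, take the constant (singleton) net $\{y_\beta\}$, which manifestly lies in $\mathcal{A}_{\phi(y_\beta)}$; the previous line reads exactly $\tau.(y_\beta,(x^i_a)_a)\in W$ for every $(x^i_a)\in\mathcal{A}_\xi$, which is clause (2) of Definition \ref{a15} and so gives $(\phi(y_\beta),\xi)\in\overline{W}$.

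The main subtlety is the dual half. Definition \ref{a15} demands a single witness $\tau_{_p}$-net whose property holds uniformly against \emph{all} nets of $\mathcal{A}_\xi$, whereas the $\tau$-cut axiom by itself only guarantees, for each pair (net, conet), a pair of thresholds that a priori might both depend on the net. It is precisely the content of Proposition \ref{a6} — that $\beta^j_U$ can be chosen independently of $i$ — that supplies the uniformity, and this is why the dual half must be routed through that proposition rather than argued by a direct composition trick as in the first half.
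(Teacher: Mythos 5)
Your proof is correct and, for the first assertion, coincides with the paper's own argument: fix $V\circ V\subseteq W$, use the $\tau_{_p}$-net property to get $(x_{a'},x_a)\in V$ for $a'\geq a\geq a_{_V}$, note that any net of ${\mathcal{A}}_{_{\phi(x_a)}}$ is eventually in $V(x_a)$, and compose to place the tail of $(x_a)_{a\in A}$ itself as the witness required by Definition \ref{a15}. For the dual half the paper only says the proof is ``similar''; your route through Proposition \ref{a6}, taking the singleton $\{y_{_\beta}\}\in{\mathcal{A}}_{_{\phi(y_{_\beta})}}$ as the witnessing $\tau_{_p}$-net, is a valid way to supply the uniformity in $\beta$ across all nets of ${\mathcal{A}}_{_\xi}$, and it amounts to the same composition trick, since Proposition \ref{a6} is itself proved by composing through the extreme indices of the two nets.
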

\begin{proof}Let $V, U$\ in ${\mathcal{U}}_{_0}$ such that $V\circ V\subseteq
U$. We have that $(x_{_\gamma},x_{_a})\in V$, for each $a,\gamma$ in
$A$ with $\gamma\geq a\geq a_{_V}$ ($a_{_V}$ the extreme index of
$(x_a)_{a\in A}$ for $V$). Fix an $a\geq a_{_V}$ and pick a
$\tau_{_p}$-net $(x_{_\delta}^\kappa)_{_{\delta\in \Delta_\kappa}}$ of
$\phi(x_a)$. Then $x_{_\delta}^\kappa\longrightarrow x_a$ and so
$(x_a,x_{_\delta}^\kappa)\in V$, whenever $\delta\geq
\delta_{_0}(\kappa$) for some $\delta_{_0}(\kappa)\in
\Delta_\kappa$. Hence, $(x_{_\gamma},x_{_\delta}^\kappa)\in U$ for
$\gamma\geq a$ and $\delta\geq \delta_{_0}(\kappa)$. Hence
$(\xi,\phi(x_a))\in \overline{U}$, whenever $a\geq a_{_V}$.
\par
The proof of the dual is similar.
\end{proof}

\begin{corollary}\label{a19}{\rm The set $\phi(X)$ is dense in $(\overline{X},\overline{\mathcal{U}})$
and $(\overline{X},(\overline{\mathcal{U}})^{-1})$.}
\end{corollary}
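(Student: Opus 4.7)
The plan is to deduce this corollary as an immediate consequence of Proposition \ref{a18}, since that proposition already packages the two required convergence statements, one for each of the two topologies.

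First I would take an arbitrary $\xi=(\mathcal{A}_\xi,\mathcal{B}_\xi)\in\overline{X}$. By Definition \ref{a3}, the class $\mathcal{A}_\xi$ is a non-empty family of $\tau_{_p}$-nets, so I can pick some $(x_a)_{a\in A}\in\mathcal{A}_\xi$. Each $\phi(x_a)$ lies in $\phi(X)$, and Proposition \ref{a18} says that $\lim_a\phi(x_a)=\xi$ with respect to $\tau(\overline{\mathcal{U}})$. This means every $\overline{\mathcal{U}}$-neighbourhood $\overline{U}(\xi)$ of $\xi$ eventually contains $\phi(x_a)$, hence meets $\phi(X)$. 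So $\xi\in\overline{\phi(X)}$ in the topology $\tau(\overline{\mathcal{U}})$.

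For the second topology, I would argue dually: since $\mathcal{B}_\xi$ is non-empty, pick a $\tau_{_q}$-conet $(y_\beta)_{\beta\in B}\in\mathcal{B}_\xi$. The second part of Proposition \ref{a18} gives $\lim_\beta(\phi(y_\beta),\xi)=0$, that is, for every $\overline{U}\in\overline{\mathcal{U}}$ there is $\beta_{_U}\in B$ with $(\phi(y_\beta),\xi)\in\overline{U}$ for $\beta\geq\beta_{_U}$, equivalently $\phi(y_\beta)\in\overline{U}^{-1}(\xi)$ eventually. Thus every basic $\tau(\overline{\mathcal{U}}^{-1})$-neighbourhood of $\xi$ meets $\phi(X)$, giving density in $(\overline{X},(\overline{\mathcal{U}})^{-1})$.

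There is essentially no obstacle here beyond verifying that the two classes of a $\tau$-cut are indeed non-empty (which is built into Definition \ref{a3}) and interpreting the symbol $\lim_\beta(\phi(y_\beta),\xi)=0$ correctly as convergence in the conjugate quasi-uniformity. The whole corollary is a one-line application of the previous proposition, so the proof should fit in a few sentences without any further technical work.
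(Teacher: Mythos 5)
Your proposal is correct and follows essentially the same route as the paper: both deduce the corollary directly from Proposition \ref{a18} by picking a $\tau_{_p}$-net from $\mathcal{A}_{_\xi}$ for the $\tau(\overline{\mathcal{U}})$-density and (dually) a $\tau_{_q}$-conet from $\mathcal{B}_{_\xi}$ for the $\tau((\overline{\mathcal{U}})^{-1})$-density. The only difference is that you spell out the dual half explicitly, whereas the paper leaves it implicit.
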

\begin{proof}The result comes directly from the above
proposition, since, if $(x_{_a})_{_{a\in A}}\in {\mathcal{A}}_{_{\xi}}$ for a $\xi\in \overline{X}\setminus \phi(X)$,
then $\displaystyle\lim_a\phi(x_{_a})=\xi$; thus, for any
$U\in {\mathcal{U}}_{_0}$, there are some $\phi(x_{_a})$ which belong to
$\overline{U}(\xi)$.
\end{proof}

Thus, the map $\phi: X\rightarrow \overline{X}$, apart of being a
set theoretical embedding, is a topological embedding as well.

\begin{definition}\label{a20} {\rm We call the structure
$(\overline{X},\overline{{\mathcal{U}}})$, {\it the}
$\tau$-{\it complement of} $(X,{\mathcal{U}})$ and the whole process of
this construction {\it the} $\tau$-{\it completion of} $(X,{\mathcal{U}})$.}
\end{definition}

\par
\smallskip
\par

The more hard point of the $\tau$-completion theory is the proof
that the structure $(\overline{X},\overline{{\mathcal{U}}})$ is
$\tau$-complete. We follow the usual procedure: we consider a
$\tau_{_p}$-net $({\xi}_{a^\star})_{a^\star\in {A}^\star}$ (and
one of its $\tau_{_q}$-conets $({\eta}_{\beta^\star})_{\beta^\star\in {B}^\star}$) in
$(\overline{X},\overline{{\mathcal{U}}})$ and we suppose that the
$\tau_{_p}$-nets do not converge, which means that they have not any {\it
end point}. So we define a suitable $\tau$-cut in $(X,{\mathcal{U}})$,
firstly a $\tau$-cut depended upon an entourage (Lemma \ref{a21}). Next we prove
(Lemma \ref{a23}) that, independently of this entourage a point of $\overline{X}$,
say $\xi$, is
fixed and that $({\xi}_{a^\star})_{a^\star\in {A}^\star}$
converges to $\xi$. In the Lemmas \ref{a22} and \ref{a24} we refer to
some $\tau_{_q}$-conets and, lastly, in theTheorem \ref{a26} we prove the main result:
there is a $\tau$-completion for any $T_{_0}$ quasi-uniform space.

\par
\smallskip

\section{The basic lemmas for the construction of a $\tau$-completion.}

\par
\smallskip
\par\noindent
The quasi-uniformity ${\mathcal{U}}_{_0}$ always is the referred in the
beginning of the \S 2. For brevity and simplicity we make use of
some phrases and some notation.

\par\smallskip\par
(1) If $({\xi}_{a^\star})_{a^\star\in
{A}^\star}$ in $\overline{X}$
is a $\tau_{_p}$-net in $\overline{X}$, then
\begin{center} $(x_{\rho(\kappa_{a^{\star}})})_
{\rho(\kappa_{a^{\star}}) \in P_{(\kappa_{a^{\star}})}} $
\end{center}
will denote a $\tau_{_p}$-net in $\mathcal{A}_{_{{\xi}_{a^\star}}}$ for a concrete $a^{\star}\in A^{\star}$.
More precisely, if $a^{\star}$ is a fixed
index of $A^{\star}$, then
$\kappa_{a^{\star}}$ denote the different $\tau_{_p}$-nets of
$\xi_{a^{\star}}$ and $\rho(\kappa_{a^{\star}})$ denote the index set of each
$\kappa_{a^{\star}}$ $\tau_{_p}$-net. Finally,
$\rho_{_V}(\kappa_{a^{\star}})$ denotes the extreme index of $(x_{\rho(\kappa_{a^{\star}})})_
{\rho(\kappa_{a^{\star}}) \in P_{(\kappa_{a^{\star}})}} $
for $V$ (see fig. 10).

\bigskip\bigskip\bigskip
\par

\pspicture(0,0)(0,0)

\Rput[ul](3,-2.15){${\mathcal{A}}_{_{\xi_{_{a^{\star}}}}}$}

\Rput[ul](7.3,-2.15){${\mathcal{B}}_{_{\xi_{_{a^{\star}}}}}$}

\pscircle[linewidth=2pt](5.43,-1.30){0.2}

\psline[linestyle=dotted]{-}(3.9,-1.34)(5.3,-1.3)

\qdisk(.9,-1.1){1pt}

\qdisk(1.1,-1.1){1pt}

\qdisk(1.3,-1.1){1pt}

\qdisk(1.5,-1.05){1pt}

\qdisk(1.7,-1.05){1pt}

\Rput[ul](1.65,-1.1){$\star$}

\psline{->}(1.3,-1.7)(1.82,-1.15)

\Rput[ul](0.65,-1.9)
{$x_{\rho_{_V}(\kappa_{a^{\star}})}$}

\qdisk(2.1,-1.2){1pt}

\qdisk(2.3,-1.1){1pt}

\qdisk(2.5,-1.2){1pt}

\qdisk(2.7,-1.3){1pt}

\qdisk(2.9,-1.4){1pt}

\qdisk(3.1,-1.35){1pt}

\qdisk(3.3,-1.37){1pt}

\qdisk(3.5,-1.31){1pt}

\qdisk(3.7,-1.34){1pt}

\psline{->}(5.4,-1.17)(5.4,-.2)

\Rput[ul](5.5,-.3){$\xi_{_{a^{\star}}}$}

\qdisk(9.8,-.6){1pt}

\qdisk(9.5,-.7){1pt}

\qdisk(9.2,-.7){1pt}

\qdisk(8.9,-.6){1pt}

\qdisk(8.7,-.8){1pt}

\qdisk(8.5,-.56){1pt}

\qdisk(8.3,-.8){1pt}

\qdisk(8.1,-.78){1pt}

\qdisk(7.9,-.91){1pt}

\qdisk(7.7,-.95){1pt}

\qdisk(7.5,-.9){1pt}

\qdisk(7.3,-1){1pt}

\psline[linestyle=dotted]{-}(7.3,-1)(5.7,-1.3)

\qdisk(9.8,-1.4){1pt}

\qdisk(9.5,-1.5){1pt}

\qdisk(9.2,-1.45){1pt}
\qdisk(8.9,-1.5){1pt}
\qdisk(8.7,-1.2){1pt}
\qdisk(8.5,-1.46){1pt}
\qdisk(8.3,-1.42){1pt}

\qdisk(8.1,-1.38){1pt}

\qdisk(7.9,-1.28){1pt}

\qdisk(7.7,-1.35){1pt}

\qdisk(7.5,-1.28){1pt}

\qdisk(7.3,-1.32){1pt}

\psline[linestyle=dotted]{-}(7.3,-1.32)(5.7,-1.3)

\psline{<-}(1.7,-3.1)(5.3,-3.1)

\Rput[ul](5.25,-3.1){${\rm V}$}

\psline{->}(5.8,-3.1)(9.4,-3.1)

\psline{<-}(.1,-.1)(.1,-1)

\Rput[ul](-.15,-1.2){$\kappa_{a^{\star}}$}

\psline{->}(.1,-1.4)(.1,-2.2)

\Rput[ul](.5,-.5){$(x_{\rho(\kappa_{a^{\star}})})_
{\rho(\kappa_{a^{\star}}) \in P_{(\kappa_{a^{\star}})}}$}

\endpspicture

\par\bigskip\bigskip\bigskip\bigskip\bigskip\bigskip\bigskip\bigskip\bigskip\bigskip\bigskip\bigskip\bigskip
\begin{center}
Figure 10
\end{center}
\par\bigskip\smallskip\par

\par\bigskip\smallskip\par

(2) Let $I_{_{a_{_0}}}$ (resp. $I_{_{\beta_{_0}}}$) denote the final segment of $(x_{_a})_{_{a\in A}}$
(resp. $(y_{_\beta})_{_{\beta\in B}}$) with initial element $x_{_{a_{_0}}}$ (resp. $y_{_{\beta_{_0}}}$).
We say that the pair $(I_{_{\beta_{_0}}},I_{_{a_{_0}}})$ is {\it $W$-close}
if $(y_{_\beta},x_{_a})\in W$ whenever $\beta\geq \beta_{_0}$ and $a\geq a_{_0}$.
By analogy we
face the same problems for $\tau_{_q}$-conets.

We begin with a $\tau_{_p}$-net $({\xi}_{a^\star})_{a^\star\in
{A}^\star}$ in $\overline{X}$ and we advance in the construction of
the demanded $\tau_{_p}$-{\it net for} $W$ by transfinite induction on a
well ordered set which, without loss of generality, may be
considered as a subnet of $A^{\star}$.

\begin{lemma}\label{a21}{\rm Let $({\xi}_ {a^\star})_{a^\star\in
{A}^\star}$ be a $\tau_{_p}$-net in $(\overline{X},\overline{\mathcal{U}})$
without end point such that for each $\gamma^{\star}>a^{\star}$,
${{\mathcal{A}}}_{_{\xi_{_{a^{\star}}}}}\nsubseteq {{\mathcal{A}}}_{_{\xi_{_{\gamma^{\star}}}}}$. Let also $W\in {\mathcal{U}}$ be an
entourage in ${\mathcal{U}}_{_0}$. Then, there is a $\tau_{_p}$-net
$(t_{_\lambda})_{_{\lambda\in \Lambda}}$
such that the $\tau_{_p}$-nets $({\xi}_{a^\star})_{a^\star\in
{A}^\star}$ and $(\phi(t_{_\lambda})){_{_{\lambda\in \Lambda}}}$ are
left cofinal for $\overline{W}$.}
\end{lemma}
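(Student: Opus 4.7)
The plan is to build $(t_{_\lambda})_{_{\lambda\in\Lambda}}$ by a diagonal transfinite selection, drawing each point from a $\tau_{_p}$-net of some class $\mathcal{A}_{_{\xi_{_{a^{\star}}}}}$. The starting observation is that since $(\xi_{_{a^\star}})$ is a $\tau_{_p}$-net for $\overline{W}$ with extreme index $a_{_W}^{\star}$, and the hypothesis $\mathcal{A}_{_{\xi_{_{a^\star}}}}\nsubseteq\mathcal{A}_{_{\xi_{_{\gamma^\star}}}}$ forbids clause (1) of Definition \ref{a15}, every membership $(\xi_{_{\gamma^\star}},\xi_{_{a^\star}})\in\overline{W}$ with $\gamma^\star>a^\star\geq a_{_W}^\star$ must be witnessed by clause (2): there is a $\tau_{_p}$-net in $\mathcal{A}_{_{\xi_{_{\gamma^\star}}}}$ that is $W$-close to every $\tau_{_p}$-net of $\mathcal{A}_{_{\xi_{_{a^\star}}}}$. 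The same applies to any $V\subseteq W$ in $\mathcal{U}_{_0}$ for $\gamma^\star\geq a^\star\geq a_{_V}^\star$, producing a $V$-close net in $\mathcal{A}_{_{\xi_{_{\gamma^\star}}}}$.

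Next, I take $\Lambda=\{(a^\star,V)\in A^\star\times\mathcal{U}_{_0}:a^\star\geq a_{_V}^\star\}$ directed by $(a^{\star\prime},V')\succeq(a^\star,V)$ iff $a^{\star\prime}\geq a^\star$ and $V'\subseteq V$, and I well-order the first coordinate. At stage $(a^\star,V)$ I pick a $\tau_{_p}$-net $\kappa_{_{(a^\star,V)}}=(x_{_\rho}^{a^\star,V})\in\mathcal{A}_{_{\xi_{_{a^\star}}}}$ that is $V$-close to every net of $\mathcal{A}_{_{\xi_{_{a_{_V}^\star}}}}$, and set $t_{_{(a^\star,V)}}=x_{_{\rho(a^\star,V)}}^{a^\star,V}$, where $\rho(a^\star,V)\geq\rho_{_V}(\kappa_{_{(a^\star,V)}})$ is chosen deep enough to force $(t_{_{(a^\star,V)}},t_{_{(b^\star,V'')}})\in V$ for every previously processed $(b^\star,V'')\prec(a^\star,V)$. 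Such a choice exists because $t_{_{(b^\star,V'')}}$ is a member of $(x_{_\rho}^{b^\star,V''})\subseteq\mathcal{A}_{_{\xi_{_{b^\star}}}}$ and $(x_{_\rho}^{a^\star,V})$ is, by construction, $V$-close to every net of $\mathcal{A}_{_{\xi_{_{b^\star}}}}$ (once $b^\star\geq a_{_V}^\star$, which holds in $\Lambda$).

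Third, I verify that $(t_{_\lambda})_{_{\lambda\in\Lambda}}$ is a $\tau_{_p}$-net in $(X,\mathcal{U})$ (its extreme index for $V_{_0}\in\mathcal{U}_{_0}$ is $(a_{_{V_{_0}}}^\star,V_{_0})$, straight from the depth condition imposed in step two), and left cofinality for $\overline{W}$ of $(\xi_{_{a^\star}})$ and $(\phi(t_{_\lambda}))$. For the first half of left cofinality: given $a^\star\geq a_{_W}^\star$, for every $(b^\star,V)\succeq(a^\star,W)$ the constant net $\{t_{_{(b^\star,V)}}\}\in\mathcal{A}_{_{\phi(t_{_{(b^\star,V)}})}}$ combined with the $W$-closeness of $(x_{_\rho}^{b^\star,V})$ to every net of $\mathcal{A}_{_{\xi_{_{a^\star}}}}$ yields $(\phi(t_{_{(b^\star,V)}}),\xi_{_{a^\star}})\in\overline{W}$ via clause (2) of Definition \ref{a15}. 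For the second half: given $(b^\star,V)\in\Lambda$, for $a^\star$ beyond a threshold the net $(x_{_\rho}^{a^\star,W})\in\mathcal{A}_{_{\xi_{_{a^\star}}}}$ is $W$-close to the particular net containing $t_{_{(b^\star,V)}}$, giving $\tau.((x_{_\rho}^{a^\star,W})_{_\rho},t_{_{(b^\star,V)}})\in W$ and hence $(\xi_{_{a^\star}},\phi(t_{_{(b^\star,V)}}))\in\overline{W}$.

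The main technical obstacle I anticipate is the diagonal selection of $\rho(a^\star,V)$ in step two: a single natural number (or index) must simultaneously dominate the $V$-extreme index of $\kappa_{_{(a^\star,V)}}$ and all the prior inter-stage constraints $(t_{_{(a^\star,V)}},t_{_{(b^\star,V'')}})\in V$. This bookkeeping is what forces the refinement of $\Lambda$ from a plain copy of $A^\star_{\geq a_{_W}^\star}$ to the product with $\mathcal{U}_{_0}$, and it rests on the well-ordering of the first coordinate together with the directedness of $\mathcal{U}_{_0}$. Once this diagonal selection is executed cleanly, the final verifications in step three reduce to unwinding Definitions \ref{a10} and \ref{a15}.
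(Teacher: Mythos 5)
Your overall architecture (transfinite/diagonal selection of one point from one $\tau_{_p}$-net of each class $\mathcal{A}_{_{\xi_{_{a^{\star}}}}}$, then unwinding Definitions \ref{a10} and \ref{a15}) is the right shape, but there is a genuine gap at the single step that carries all the weight. At stage $(a^{\star},V)$ you choose $\kappa_{_{(a^{\star},V)}}\in\mathcal{A}_{_{\xi_{_{a^{\star}}}}}$ as the clause-(2) witness for the \emph{one} pair $(\xi_{_{a^{\star}}},\xi_{_{a_{_V}^{\star}}})\in\overline{V}$, and then assert that this same net ``is, by construction, $V$-close to every net of $\mathcal{A}_{_{\xi_{_{b^{\star}}}}}$ (once $b^{\star}\geq a_{_V}^{\star}$)''. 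That does not follow. Definition \ref{a15}(2) produces, for each pair of indices $\gamma^{\star}>b^{\star}$ separately, \emph{some} witnessing net in $\mathcal{A}_{_{\xi_{_{\gamma^{\star}}}}}$, and these witnesses may be different nets for different $b^{\star}$; nothing forces the witness for the pair $(a^{\star},a_{_V}^{\star})$ to also serve the intermediate pairs $(a^{\star},b^{\star})$. Nor is there any monotonicity of the classes to fall back on --- the standing hypothesis $\mathcal{A}_{_{\xi_{_{a^{\star}}}}}\nsubseteq\mathcal{A}_{_{\xi_{_{\gamma^{\star}}}}}$ is exactly the statement that the classes vary. Since your depth condition $(t_{_{(a^{\star},V)}},t_{_{(b^{\star},V'')}})\in V$, the extreme-index claim for $(t_{_\lambda})$, and both halves of the left-cofinality verification all rest on this uniform closeness to \emph{all} intermediate classes, the proof does not go through as written.

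The paper's proof exists precisely to close this gap, by an interpolation-and-composition device: because $(\xi)$ has no last element, one can place a point $\xi_{_{\gamma^{\star}}}$ strictly between all previously processed indices $a^{\star}_{_j}$ and the new index $a^{\star}_{_{i}}$. The single clause-(2) witness for $(\xi_{_{a^{\star}_{_{i}}}},\xi_{_{\gamma^{\star}}})\in\overline{V}$ is $V$-close to \emph{every} net of $\mathcal{A}_{_{\xi_{_{\gamma^{\star}}}}}$, in particular to each of the (possibly different) witnesses for the pairs $(\xi_{_{\gamma^{\star}}},\xi_{_{a^{\star}_{_j}}})\in\overline{V}$; composing, one fixed net in $\mathcal{A}_{_{\xi_{_{a^{\star}_{_{i}}}}}}$ becomes $V\circ V$-close to every net of every earlier class, and $V\circ V\circ V$-close to the earlier chosen final segments --- whence the budget $V\circ V\circ V\subseteq W$ and the careful tracking of final segments $I$ and $\hat{I}$ through the transfinite induction. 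If you insert this interpolation step into your stage-$(a^{\star},V)$ selection (choosing the witness against an interpolated $\gamma^{\star}$ rather than against $\xi_{_{a_{_V}^{\star}}}$, and relaxing $V$-closeness to $V\circ V$-closeness throughout), your diagonal bookkeeping can be made to work; without it, the construction stalls at the second stage already.
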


\begin{proof} Let $(\xi)=({\xi}_ {a^\star})_{a^\star\in {A}^\star}$ be as above, $W, V$ be entourages of
${\mathcal{U}}_{_0}$ such that $V\circ V\circ V\subseteq W$ and
$\xi_{a^{\star}_{_{\overline{V}}}}$ be the extreme point of
$(\xi)$ for $\overline{V}$. We symbolize by

\begin{center} $(x_{\rho(\kappa_{a^{\star}})})_
{\rho(\kappa_{a^{\star}}) \in P_{_{\!\kappa_{a^{\star}}}}} $
\end{center}
\par\noindent
any $\tau_{_p}$-net of $\xi_{a^{\star}}$,
with the evident meaning of the notation and by
$\rho_{_V}(\kappa_{a^{\star}})$
the extreme index of
$(x_{\rho(\kappa_{a^{\star}})})_
{\rho(\kappa_{a^{\star}}) \in P_{_{\!\kappa_{a^{\star}}}}}$ for $V$. The entourage $V$ is fixed during all the lemma's proof.
\par
We advance to the construction of the demanded ${\tau_{_p}}$-{\it net
for} $W$ by transfinite induction on a well ordered set
subnet of
$A^{\star}$.
\bigskip
\par

(1) {\it The first step.}
\par
Let $\xi_{{a_{_0}^{\star}}}$ in $(\xi)$, $a^{\star}_0>
a^{\star}_{_{\overline{V}}}$ and $(x_{\rho({\kappa}\!^{^{_V}}\
\!\!\!\!\!\!\!_{a\!^{\star}_{_0}})})_ {\rho({\kappa}\!^{^{_V}}\
\!\!\!\!\!\!\!_{a\!^{\star}_{_0}}) \in P_{_{{\!\kappa}\!^{^{_V}}\
\!\!\!\!\!\!\!_{a\!^{\star}_{_0}}}}}$ be an arbitrary $\tau_{_p}$-net of
${{\mathcal{A}}}_{{_\xi}_{_{a^{\star}_0}}}$.

Put ${I}_{{{\kappa}\!^{^{_V}}\
\!\!\!\!\!\!\!_{a\!^{\star}_{_0}}}}=\{x_{\rho({\kappa}\!^{^{_V}}\
\!\!\!\!\!\!\!_{a\!^{\star}_{_0}})}\vert
 \rho({\kappa}\!^{^{_V}}\  \!\!\!\!\!_{a\!^{\star}_{_0}})\geq
 \rho_{_V}({\kappa}\!^{^{_V}}\  \!\!\!\!\!_{a\!^{\star}_{_0}}) \}$ and
$G_{_{ a^{\star}_{_{0}}}}={I}_{{{\kappa}\!^{^{_V}}\
\!\!\!\!\!\!\!_{a\!^{\star}_{_0}}}}$.

\par
Since $(\xi)$ has not a last element, there are
$a^{\star}_1>\gamma^{\star}>a^\star_0$ ($a^{\star}_{_1}$ assigns to
$\xi_{a^\star_{_1}}$). Since
$(\xi_{_{a^{\star}_{_1}}},\xi_{_{\gamma^{\star}}})\in \overline{V}$
and $(\xi_{_{\gamma^{\star}}},\xi_{_{a^{\star}_0}})\in \overline{V}$
we have the following:
\par\noindent
(A) There is a final segment for $V$
\begin{center}${I}_{{{\kappa}\!^{^{_V}}\  \!\!\!\!\!\!\!_{a\!^{\star}_{_1}}}}=\{x_{\rho({\kappa}\!^{^{_V}}\
\!\!\!\!\!\!\!_{a\!^{\star}_{_1}})}\vert
 \rho({\kappa}\!^{^{_V}}\  \!\!\!\!\!_{a\!^{\star}_{_1}})\geq
 \widetilde{\rho}({\kappa}\!^{^{_V}}\  \!\!\!\!\!_{a\!^{\star}_{_1}}) \}$
with $\widetilde{\rho}({\kappa}\!^{^{_V}}\
\!\!\!\!\!_{a\!^{\star}_{_1}})\geq \rho_{_V}({\kappa}\!^{^{_V}}\
\!\!\!\!\!_{a\!^{\star}_{_1}})$
 \end{center}
\par\noindent
of a $\tau_{_p}$-net $(x_{\rho({\kappa}\!^{^{_V}}\
\!\!\!\!\!\!\!_{a\!^{\star}_{_1}})})_{\rho({\kappa}\!^{^{_V}}\
\!\!\!\!\!\!\!_{a\!^{\star}_{_1}}) \in P_{({\kappa}\!^{^{_V}}\
\!\!\!\!\!\!\!_{a\!^{\star}_{_1}})}} \in {{\mathcal{A}}}_{{_\xi}_{_{a^{\star}_1}}}$ for which there
are final segments $I_{_{k_{_{\gamma^{\star}}}}}$ of all $\tau_{_p}$-nets
of $\xi^\star_{\gamma^{\star}}$ such that the pair
$({I}_{{{\kappa}\!^{^{_V}}\
\!\!\!\!\!\!\!_{a\!^{\star}_{_1}}}},I_{_{k_{_{\gamma^{\star}}}}})$
is $V$-close.\footnote{In the following, the
entourage $V$ in the form ${\kappa}\!^{^{_V}}\
\!\!\!\!\!_{a\!^{\star}_{_i}}$ will indicate the concrete $\tau_{_p}$-net
which have been chosen in the point
$\xi^{\star}_{_{a^{\star}_{_i}}}$ satisfying (A).}
\par\noindent
(B) There is a final segment for $V$
\begin{center} $I_{_{k^{^0}_{_{\gamma^{\star}}}}}=
\{x_{\rho(k^{^0}_{_{\gamma^{\star}}})}\vert
\rho(k^{^0}_{_{\gamma^{\star}}})\geq
\widetilde{\rho}(k^{^0}_{_{\gamma^{\star}}})\}$
\end{center}
\par\noindent
of a $\tau_{_p}$-net $(x_{\rho(k^{^0}_{_{\gamma^{\star}}})})_
{\rho(k^{^0}_{_{\gamma^{\star}}}) \in
P_{(k^{^0}_{_{\gamma^{\star}}})}} \in {{\mathcal{A}}}_{{_\xi}_{_{\gamma^{\star}}}}$ for which there are final segments
$\hat{I}_{_{k_{_{a_{_0}^{\star}}}}}$ of all $\tau_{_p}$-nets of
$\xi_{_{a^{\star}_0}}$ such that the pair
$(I_{_{k^{^0}_{_{\gamma^{\star}}}}},\hat{I}_{_{k_{_{a_{_0}^{\star}}}}})$
is $V$-close.
\par\noindent
(C) Since $(x_{\rho({\kappa}\!^{^{_V}}\
\!\!\!\!\!\!\!_{a\!^{\star}_{_0}})})_ {\rho({\kappa}\!^{^{_V}}\
\!\!\!\!\!\!\!_{a\!^{\star}_{_0}}) \in P_{({\kappa}\!^{^{_V}}\
\!\!\!\!\!\!\!_{a\!^{\star}_{_0}})}}$ is $\tau_{_p}$-net we have that
\begin{center}
$\hat{I}_{{{\kappa}\!^{^{_V}}\
\!\!\!\!\!\!\!_{a\!^{\star}_{_0}}}}\subseteq
{I}_{{{\kappa}\!^{^{_V}}\ \!\!\!\!\!\!\!_{a\!^{\star}_{_0}}}}$ \ or
${I}_{{{\kappa}\!^{^{_V}}\
\!\!\!\!\!\!\!_{a\!^{\star}_{_0}}}}\subseteq
\hat{I}_{{{\kappa}\!^{^{_V}}\  \!\!\!\!\!\!\!_{a\!^{\star}_{_0}}}}$.
\end{center}
\par
Finally, from (A) and (B) for
$k_{_{\gamma^{\star}}}=k^{^0}_{_{\gamma^{\star}}}$
we conclude that:
\par\noindent
(D) a) The pair $({I}_{{{\kappa}\!^{^{_V}}\
\!\!\!\!\!\!\!_{a\!^{\star}_{_1}}}},\hat{I}_{_{k_{_{a_{_0}^{\star}}}}})$
is $V\circ V$-close for all $k_{_{
a^{\!\star}_{_0}}}$, which jointly with (C) for
$k_{_{a_{_0}^{\star}}}={\kappa}\!^{^{_V}}\
\!\!\!\!\!_{a\!^{\star}_{_0}}$ we have that:
\par
\bigskip
\par
\ b)\ The pair $({I}_{{{\kappa}\!^{^{_V}}\
\!\!\!\!\!\!\!_{a\!^{\star}_{_1}}}},{I}_{{{\kappa}\!^{^{_V}}\
\!\!\!\!\!\!\!_{a\!^{\star}_{_0}}}})$ is $V\circ V\circ V$-close (see fig.11).
\par

We put $G_{_{ a^{\star}_{_{1}}}}=G_{_{ a^{\star}_{_{0}}}}\cup
{I}_{{{\kappa}\!^{^{_V}}\ \!\!\!\!\!\!\!_{a\!^{\star}_{_1}}}}$.
Since $(\xi)$ has not a last element, there are $a^{\star}_2, \gamma^{\star\star}\in A^{\star}$ such that
$a^{\star}_2>\gamma^{\star\star}>a^\star_1>a^{\star}_0$.

\par

\bigskip\bigskip\bigskip
\par

\pspicture(0,0)(0,0)

\pscircle[linewidth=2pt](1.33,-1.30){0.2}

\pscircle[linewidth=2pt](3.33,-1.30){0.2}

\pscircle[linewidth=2pt](5.33,-1.30){0.2}

\pscircle[linewidth=2pt](8.33,-1.30){0.2}

\multiput(1.94,-1.52)(1.46,-1.30){1}{*}

\multiput(2.24,-1.52)(1.46,-1.30){1}{*}

\multiput(2.54,-1.52)(1.46,-1.30){1}{*}

\multiput(4.64,-1.52)(1.46,-1.30){1}{*}

\multiput(4.34,-1.52)(1.46,-1.30){1}{*}

\multiput(4.04,-1.52)(1.46,-1.30){1}{*}

\multiput(6.94,-1.52)(1.46,-1.30){1}{*}

\multiput(6.64,-1.52)(1.46,-1.30){1}{*}

\multiput(6.34,-1.52)(1.46,-1.30){1}{*}

\multiput(9.64,-1.52)(1.46,-1.30){1}{*}

\multiput(9.34,-1.52)(1.46,-1.30){1}{*}

\multiput(9.1,-1.52)(1.46,-1.30){1}{*}

\psline{->}(9.9,-1.30)(10.3,-1.30)

\multiput(.2,-1.52)(1.46,-1.30){1}{*}

\multiput(.5,-1.52)(1.46,-1.30){1}{*}

\multiput(.8,-1.52)(1.46,-1.30){1}{*}

\Rput[ul](.1,-1){$(\xi)$}

\Rput[ul](1,-.6){$\xi_{_{a^{\star}_{_V}}}$}

\Rput[ul](5,-.6){$\xi_{_{\gamma^{\star}}}$}

\Rput[ul](3,-.6){$\xi_{_{a^{\star}_{_0}}}$}

\Rput[ul](8,-.6){$\xi_{_{a^{\star}_{_1}}}$}

\psline[linestyle=dotted]{-}(3.1,-2.44)(3.3,-1.5)

\qdisk(3.1,-2.56){1pt}

\qdisk(3.1,-2.76){1pt}

\qdisk(3,-2.86){1pt}

\qdisk(3.15,-2.97){1pt}

\qdisk(3.1,-3.08){1pt}

\qdisk(3.07,-3.18){1pt}

\Rput[ul](7.7,-3.35){$\star$}

\Rput[ul](5.95,-3.36){$x_{\widetilde{\rho}({\kappa}\!^{^{_V}}\
\!\!\!\!\!\!\!_{a\!^{\star}_{_1}})}$}

\Rput[ul](2.7,-3.35){$\star$}

\qdisk(3.1,-3.38){1pt}

\qdisk(3.07,-3.58){1pt}

\qdisk(2.95,-3.75){1pt}

\Rput[ul](2.8,-3.95){$\star$}

\psline{->}(2.2,-4.7)(2.9,-4.15)

\Rput[ul](.8,-4.36){$x_{\rho_{_V}({\kappa}\!^{^{_V}}\
\!\!\!\!\!\!\!_{a\!^{\star}_{_0}})}$}

\psline{->}(7.2,-3.5)(7.9,-3.4)

\psline{->}(2.2,-3.5)(2.9,-3.4)

\Rput[ul](.8,-3.36){$x_{\widetilde{\rho}({\kappa}\!^{^{_V}}\
\!\!\!\!\!\!\!_{a\!^{\star}_{_0}})}$}

\Rput[ul](-.3,-2.46){$
(x_{\rho({\kappa}\!^{^{_V}}\
\!\!\!\!\!\!\!_{a\!^{\star}_{_0}})})_ {\rho({\kappa}\!^{^{_V}}\
\!\!\!\!\!\!\!_{a\!^{\star}_{_0}}) \in P_{({\kappa}\!^{^{_V}}\
\!\!\!\!\!\!\!_{a\!^{\star}_{_0}})}}$}

\psline{<-}(3.8,-1.30)(3.63,-2.4)

\Rput[ul](3.35,-2.8){${I}_{{{\kappa}\!^{^{_V}}\
\!\!\!\!\!\!\!_{a\!^{\star}_{_0}}}}$}

\psline{->}(3.5,-3)(3.3,-4.1)

\psline[linestyle=dotted]{-}(8.1,-2.44)(8.3,-1.5)

\qdisk(8.1,-2.56){1pt}

\qdisk(8.1,-2.76){1pt}

\qdisk(8,-2.86){1pt}

\qdisk(8.15,-2.97){1pt}

\qdisk(8.1,-3.08){1pt}

\qdisk(8.07,-3.18){1pt}

\qdisk(8.1,-3.38){1pt}

\qdisk(8.07,-3.58){1pt}

\qdisk(7.95,-3.75){1pt}

\Rput[ul](7.8,-3.95){$\star$}

\psline{->}(7.2,-4.7)(7.9,-4.15)

\Rput[ul](6,-4.36){$
x_{\widetilde{\rho}({\kappa}\!^{^{_V}}\
\!\!\!\!\!\!\!_{a\!^{\star}_{_1}})}$}

\Rput[ul](4.7,-2.46){$
(x_{\rho({\kappa}\!^{^{_V}}\
\!\!\!\!\!\!\!_{a\!^{\star}_{_1}})})_ {\rho({\kappa}\!^{^{_V}}\
\!\!\!\!\!\!\!_{a\!^{\star}_{_1}}) \in P_{({\kappa}\!^{^{_V}}\
\!\!\!\!\!\!\!_{a\!^{\star}_{_1}})}}$}

\psline{<-}(8.8,-1.30)(8.63,-2.4)

\Rput[ul](8.35,-2.8){${I}_{{{\kappa}\!^{^{_V}}\
\!\!\!\!\!\!\!_{a\!^{\star}_{_1}}}}$}

\psline{->}(8.5,-3)(8.3,-4.1)

\endpspicture

\par\bigskip\bigskip\bigskip
\bigskip\bigskip\bigskip\bigskip\bigskip\bigskip\bigskip\bigskip\bigskip\bigskip\bigskip\bigskip
\begin{center}
Figure 11
\end{center}

\par\noindent
(E) From $a^{\star}_2>\gamma^{\star\star}>a^\star_1$ and
$a^{\star}_2>\gamma^{\star\star}>a^{\star}_0$ according to the above process consisting of four steps
((A)$\rightarrow$ (D)) we conclude that:

\par
\smallskip
\par\noindent
There is a final segment

\begin{center}${I}_{{{\kappa}\!^{^{_V}}\  \!\!\!\!\!\!\!_{a\!^{\star}_{_2}}}}=\{x_{\rho({\kappa}\!^{^{_V}}\
\!\!\!\!\!\!\!_{a\!^{\star}_{_2}})}\vert
\rho({\kappa}\!^{^{_V}}\  \!\!\!\!\!_{a\!^{\star}_{_2}})\geq
\widetilde{\rho}({\kappa}\!^{^{_V}}\  \!\!\!\!\!_{a\!^{\star}_{_2}}) \}$
with $\widetilde{\rho}({\kappa}\!^{^{_V}}\
\!\!\!\!\!_{a\!^{\star}_{_2}})\geq \rho_{_V}({\kappa}\!^{^{_V}}\
\!\!\!\!\!_{a\!^{\star}_{_2}})$
\end{center}
\par\noindent
of a $\tau_{_p}$-net $(x_{\rho({\kappa}\!^{^{_V}}\
\!\!\!\!\!\!\!_{a\!^{\star}_{_2}})})_{\rho({\kappa}\!^{^{_V}}\
\!\!\!\!\!\!\!_{a\!^{\star}_{_2}}) \in P_{({\kappa}\!^{^{_V}}\
\!\!\!\!\!\!\!_{a\!^{\star}_{_2}})}} \in {{\mathcal{A}}}_{{_\xi}_{_{a^{\star}_2}}}$, such that:
\par
\smallskip
\par\noindent
a) The pair $({I}_{{{\kappa}\!^{^{_V}}\
\!\!\!\!\!\!\!_{a\!^{\star}_{_{i}}}}},\hat{I}_{_{k_{_{a_{_j}^{\star}}}}})$
is $V\circ V$-close for all
$k_{_{a_{_j}^{\star}}}$, $i,j\in \{0,1,2\}$, $i>j$ (for
each $j\in \{0,1\}$, $\hat{I}_{_{k_{_{a_{_j}^{\star}}}}}$ are the
final segments of the $k_{_{a_{_j}^{\star}}}$ $\tau_{_p}$-net of the
point $\xi_{_{a_{_j}^{\star}}}$ which we take by applying the step
(B) in the relations
$(\xi_{_{\gamma^{\star}}},\xi_{_{a^{\star}_1}})\in \overline{V}$ and
$(\xi_{_{\gamma^{\star}}},\xi_{_{a^{\star}_0}})\in \overline{V}$).

\par
\bigskip
\par\noindent
b)\ The pair $({I}_{{{\kappa}\!^{^{_V}}\
\!\!\!\!\!\!\!_{a\!^{\star}_{_i}}}},{I}_{{{\kappa}\!^{^{_V}}\
\!\!\!\!\!\!\!_{a\!^{\star}_{_j}}}})$ is $V\circ V\circ V$-close for
each $i,j\in \{0,1,2\}$, $i>j$.

\par
\bigskip
\par

\par
We put
\par
\smallskip
\par\noindent
(F) \ \ $G_{_{ a^{\star}_{_{2}}}}=G_{_{ a^{\star}_{_{1}}}}\cup
{I}_{{{\kappa}\!^{^{_V}}\ \!\!\!\!\!\!\!_{a\!^{\star}_{_2}}}}$ (see fig. 12).

\pspicture(0,0)(0,0)

\pscircle[linewidth=2pt](1.43,-1.30){0.2}

\pscircle[linewidth=2pt](3.33,-1.30){0.2}

\pscircle[linewidth=2pt](5.23,-1.30){0.2}

\pscircle[linewidth=2pt](7.13,-1.30){0.2}

\pscircle[linewidth=2pt](9.03,-1.30){0.2}

\multiput(1.94,-1.52)(1.46,-1.30){1}{*}

\multiput(2.24,-1.52)(1.46,-1.30){1}{*}

\multiput(2.54,-1.52)(1.46,-1.30){1}{*}

\multiput(4.64,-1.52)(1.46,-1.30){1}{*}

\multiput(4.34,-1.52)(1.46,-1.30){1}{*}

\multiput(4.04,-1.52)(1.46,-1.30){1}{*}

\multiput(6.36,-1.52)(1.46,-1.30){1}{*}

\multiput(6.08,-1.52)(1.46,-1.30){1}{*}

\multiput(5.80,-1.52)(1.46,-1.30){1}{*}

\multiput(8.25,-1.52)(1.46,-1.30){1}{*}

\multiput(7.94,-1.52)(1.46,-1.30){1}{*}

\multiput(7.65,-1.52)(1.46,-1.30){1}{*}

\multiput(9.42,-1.52)(1.46,-1.30){1}{*}

\multiput(9.67,-1.52)(1.46,-1.30){1}{*}

\multiput(9.95,-1.52)(1.46,-1.30){1}{*}

\psline{->}(10.2,-1.30)(10.5,-1.30)

\multiput(.2,-1.52)(1.46,-1.30){1}{*}

\multiput(.5,-1.52)(1.46,-1.30){1}{*}

\multiput(.8,-1.52)(1.46,-1.30){1}{*}

\Rput[ul](.1,-1){$(\xi)$}

\Rput[ul](1,-.6){$\xi_{_{a^{\star}_{_0}}}$}

\Rput[ul](3,-.6){$\xi_{_{\gamma^{\star}}}$}

\Rput[ul](5,-.6){$\xi_{_{a^{\star}_{_1}}}$}

\Rput[ul](6.85,-.6){$\xi_{_{\gamma^{\star\star}}}$}

\Rput[ul](8.7,-.6){$\xi_{_{a^{\star}_{_2}}}$}

\psline{-}(1,-2.8)(1.37,-1.40)

\Rput[ul](.74,-2.8){$\star$}

\Rput[ul](-.7,-2){$G_{_{ a^{\star}_{_{0}}}}={I}_{{{\kappa}\!^{^{_V}}\
\!\!\!\!\!\!\!_{a\!^{\star}_{_0}}}}$}

\psline{-}(4.8,-2.8)(5.17,-1.40)

\Rput[ul](4.54,-2.8){$\star$}

\Rput[ul](4.19,-2){${I}_{{{\kappa}\!^{^{_V}}\
\!\!\!\!\!\!\!_{a\!^{\star}_{_1}}}}$}

\psline{-}(8.6,-2.8)(8.97,-1.40)

\Rput[ul](8.34,-2.8){$\star$}

\Rput[ul](8,-2){${I}_{{{\kappa}\!^{^{_V}}\
\!\!\!\!\!\!\!_{a\!^{\star}_{_2}}}}$}

\psline{<-}(.96,-3.3)(2.4,-3.3)

\Rput[ul](2.5,-3.3){$
G_{_{ a^{\star}_{_{1}}}}$}

\psline{->}(3.5,-3.3)(4.8,-3.3)

\psline{<-}(.96,-3.7)(4.3,-3.7)

\Rput[ul](4.28,-3.76){$
G_{_{ a^{\star}_{_{2}}}}$}

\psline{->}(5.1,-3.7)(8.5,-3.7)

\endpspicture

\par\bigskip\bigskip\bigskip\bigskip\bigskip\bigskip\bigskip\bigskip
\bigskip\bigskip\bigskip\bigskip\bigskip
\begin{center}
Figure 12
\end{center}
\par\bigskip\smallskip\par

\par
(2) {\it From $\beta$ to $\beta+1$.}
\par\noindent
We intend to pick up, by induction, a subnet
$(\xi_{_{a^{\star}_{_{i}}}})_{_{i\in I_{_W}}}$ of $(\xi)$
with the properties that have been generated in Step 1.
We assume that $\beta$ is a
regular ordinal
and that, for every $\xi_{a^{\star}_{i}}$, $i\leq\beta$,
we have already chosen:
(i) The final segments
$\hat{I}_{_{k_{_{a_{_{i}}^{\star}}}}}$ of all the $\tau_{_p}$-nets of each $\xi_{a^{\star}_{i}}$ and (ii) the concrete final
segment ${I}_{{{\kappa}\!^{^{_V}}\
\!\!\!\!\!\!\!_{a\!^{\star}_{_{i}}}}}$ which has fixed extreme
point, the point $x_{\widetilde{\rho}({\kappa}\!^{^{_V}}\
\!\!\!\!\!\!\!_{a\!^{\star}_{_{i}}})}$
($\widetilde{\rho}({\kappa}\!^{^{_V}}\
\!\!\!\!\!_{a\!^{\star}_{_{i}}})\geq \rho_{_V}({\kappa}\!^{^{_V}}\
\!\!\!\!\!_{a\!^{\star}_{_{i}}})$). We then have:
\par
\smallskip\noindent
($A_1$) a) For each $j<i\leq \beta$, the pair
$({I}_{{{\kappa}\!^{^{_V}}\
\!\!\!\!\!\!\!_{a\!^{\star}_{_{i}}}}},{\hat{I}}_{_{k_{_{a_{_{j}}^{\star}}}}})$
is $V\circ V$-close for all $k_{_{a_{_{j}}^{\star}}}$.
\par
\bigskip
\par
\ \ \ b)\  For each $j<i\leq \beta$, the pair
$({I}_{{{\kappa}\!^{^{_V}}\
\!\!\!\!\!\!\!_{a\!^{\star}_{_{i}}}}}$,${I}_{{{\kappa}\!^{^{_V}}\
\!\!\!\!\!\!\!_{a\!^{\star}_{_{j}}}}})$ is $V\circ V\circ V$-close.
\par\noindent
($B_1$) Everyone of these points $\xi_{a^{\star}_{i}}$
corresponds to another set
\begin{center}
$G_{_{a^{\star}_{_{i}}}}=(\displaystyle\bigcup_{j<i}G_{_{
a^{\star}_{_{j}}}})\cup {I}_{{{\kappa}\!^{^{_V}}\
\!\!\!\!\!\!\!_{a\!^{\star}_{_{i}}}}}$
\end{center}
\par
\smallskip
\par\noindent

\par
\bigskip
\par\noindent
Since $\xi_{a^{\star}_{_\beta}}$ is not last element of
$(\xi)$, there are elements $\gamma^{\star}$ and $\epsilon^{\star}$
of $A^{\star}$such that $a^{\star}\!\!\!_{_{\beta}}<
\gamma^{\star}<\epsilon^{\star}$. Hence, for each $i\leq \beta$ there holds $a^{\star}_{i}<
\gamma^{\star}<\epsilon^{\star}$. But then,
as in the above case (E), there is a concrete final segment
\begin{center} $I_{_{k\!\!^{^V}_{\epsilon^{\star}}}}=
\{x_{\rho(k\!^{^V}_{\epsilon^{\star}})}\vert \rho(k\!^{^V}_{\epsilon^{\star}})\geq
\widetilde{\rho}(k\!^{^V}_{\epsilon^{\star}})\}$ with
$\widetilde{\rho}(k\!^{^V}_{\epsilon^{\star}})\geq
\rho_{_V}(k\!^{^V}_{\epsilon^{\star}})$
\end{center}
\par\noindent
of a $\tau_{_p}$-net
$(x_{\rho(k\!^{^V}_{\epsilon^{\star}})})_
{\rho(k\!^{^V}_{\epsilon^{\star}}) \in
P_{(k\!^{^V}_{\epsilon^{\star}})}} \in {{\mathcal{A}}}_{{_\xi}_{_{\epsilon^{\star}}}}$ such that:
\par
\smallskip
\par\noindent
a)\ The pair
$(I_{_{k\!\!^{^V}_{\epsilon^{\star}}}},{\hat{I}}_{_{k_{_{a_{_{i}}^{\star}}}}})$
is $V\circ V$-close for each $i\leq \beta$ and all $k_{_{a_{_{i}}^{\star}}}$.
\par
\bigskip
\par\noindent
b)\ The pair
$(I_{_{k\!\!^{^V}_{\epsilon^{\star}}}},{I}_{{{\kappa}\!^{^{_V}}\
\!\!\!\!\!\!\!_{a\!^{\star}_{_{i}}}}})$ is $V\circ V\circ V$-close
for each $i\leq \beta$.
\par
\smallskip
\par
We put  ${I}_{{{\kappa}\!^{^{_V}}\
\!\!\!\!\!\!\!_{a\!^{\star}_{_{\beta+1}}}}}=I_{_{k\!^{^V}_{\epsilon^{\star}}}}$
and $G_{_{
a^{\star}_{_{{\beta+1}}}}}=(\displaystyle\bigcup_{i\leq\beta}G_{_{a^{\star}_{_{i}}}})\cup
{I}_{{{\kappa}\!^{^{_V}}\
\!\!\!\!\!\!\!_{a\!^{\star}_{_{\beta+1}}}}}$.
\par
\smallskip
\par
It is evident that the above properties ($A_1$) to ($B_1$) are
extended for each $i\leq \beta+1$ (see fig. 13).

\pspicture(0,0)(0,0)

\pscircle[linewidth=2pt](1.43,-1.30){0.2}

\pscircle[linewidth=2pt](3.33,-1.30){0.2}

\pscircle[linewidth=2pt](5.23,-1.30){0.2}

\pscircle[linewidth=2pt](7.13,-1.30){0.2}

\pscircle[linewidth=2pt](9.03,-1.30){0.2}

\multiput(1.94,-1.52)(1.46,-1.30){1}{*}

\multiput(2.24,-1.52)(1.46,-1.30){1}{*}

\multiput(2.54,-1.52)(1.46,-1.30){1}{*}

\multiput(4.64,-1.52)(1.46,-1.30){1}{*}

\multiput(4.34,-1.52)(1.46,-1.30){1}{*}

\multiput(4.04,-1.52)(1.46,-1.30){1}{*}

\multiput(6.36,-1.52)(1.46,-1.30){1}{*}

\multiput(6.08,-1.52)(1.46,-1.30){1}{*}

\multiput(5.80,-1.52)(1.46,-1.30){1}{*}

\multiput(8.25,-1.52)(1.46,-1.30){1}{*}

\multiput(7.94,-1.52)(1.46,-1.30){1}{*}

\multiput(7.65,-1.52)(1.46,-1.30){1}{*}

\multiput(9.42,-1.52)(1.46,-1.30){1}{*}

\multiput(9.67,-1.52)(1.46,-1.30){1}{*}

\multiput(9.95,-1.52)(1.46,-1.30){1}{*}

\psline{->}(10.2,-1.30)(10.5,-1.30)

\multiput(.2,-1.52)(1.46,-1.30){1}{*}

\multiput(.5,-1.52)(1.46,-1.30){1}{*}

\multiput(.8,-1.52)(1.46,-1.30){1}{*}

\Rput[ul](.1,-1){$(\xi)$}

\Rput[ul](1,-.6){$\xi_{_{a^{\star}_{_j}}}$}

\Rput[ul](3,-.6){$\xi_{_{a^{\star}_{_i}}}$}

\Rput[ul](5,-.6){$\xi_{_{a^{\star}_{_\beta}}}$}

\Rput[ul](6.85,-.6){$\xi_{_{a^{\star}_{_\gamma}}}$}

\Rput[ul](8.7,-.6){$\xi_{_{a^{\star}_{_\epsilon}}}$}

\psline{-}(1,-2.8)(1.37,-1.40)

\Rput[ul](.74,-2.8){$\star$}

\Rput[ul](.21,-2){${I}_{{{\kappa}\!^{^{_V}}\
\!\!\!\!\!\!\!_{a\!^{\star}_{_j}}}}$}

\psline{-}(3,-2.8)(3.37,-1.40)

\Rput[ul](2.74,-2.8){$\star$}

\Rput[ul](2.31,-2){${I}_{{{\kappa}\!^{^{_V}}\
\!\!\!\!\!\!\!_{a\!^{\star}_{_i}}}}$}

\psline{-}(4.8,-2.8)(5.17,-1.40)

\Rput[ul](4.54,-2.8){$\star$}

\Rput[ul](4.19,-2){${I}_{{{\kappa}\!^{^{_V}}\
\!\!\!\!\!\!\!_{a\!^{\star}_{_\beta}}}}$}

\psline{-}(8.6,-2.8)(8.97,-1.40)

\Rput[ul](8.34,-2.8){$\star$}

\Rput[ul](9,-2){$I_{_{k\!^{^V}_{\epsilon^{\star}}}}=
{I}_{{{\kappa}\!^{^{_V}}\
\!\!\!\!\!\!\!_{a\!^{\star}_{_{\beta+1}}}}}$}

\psline{<-}(.26,-3.3)(2.4,-3.3)

\Rput[ul](2.5,-3.3){$
G_{_{ a^{\star}_{_{\beta}}}}$}

\psline{->}(3.5,-3.3)(4.8,-3.3)

\psline{<-}(.26,-3.7)(4.3,-3.7)

\Rput[ul](4.28,-3.76){$
G_{_{ a^{\star}_{_{\beta+1}}}}$}

\psline{->}(5.1,-3.7)(8.5,-3.7)

\endpspicture

\par\bigskip\bigskip\bigskip\bigskip\bigskip\bigskip\bigskip\bigskip
\bigskip\bigskip\bigskip\bigskip\bigskip
\begin{center}
Figure 13
\end{center}
\par\bigskip\smallskip\par

\par

(3){\it The case of being $\beta$ a limit point.}
\par
\bigskip
\par\noindent
Let $\beta$ be a limit ordinal and $(\xi)$ be as above. We suppose
that we have constructed a subnet
$(\xi_{_{a_{_{i}}^{\star}}})_{i<\beta}$ of $(\xi)$ whose the
elements have indexes larger than $a^{\star}_{_{\overline{V}}}$ and
they constitute a linear subnet of $A^{\star}$. Moreover, for every
one of these points, say $\xi_{_{a_{_{i}}^{\star}}}$, it corresponds
the final segments ${\hat{I}}_{_{k_{_{a_{_{i}}^{\star}}}}}$ of all
the $k_{_{a_{_{i}}^{\star}}}$ $\tau_{_p}$-nets of
$\xi_{_{a_{_{i}}^{\star}}}$ as well as the concrete final segment
${I}_{{{\kappa}\!^{^{_V}}\ \!\!\!\!\!\!\!_{a\!^{\star}_{_{i}}}}}$
which has been chosen in the above process and it exclusively depends on $V$.
That final segments have
the following properties:
\par
\bigskip
\par\noindent
a$^{\prime}$) The pair $({I}_{{{\kappa}\!^{^{_V}}\
\!\!\!\!\!\!\!_{a\!^{\star}_{_i}}}},{\hat{I}}_{_{k_{_{a_{_j}^{\star}}}}})$
is $V\circ V$-close for each $j<i<\beta$ and all $k_{_{a_{_j}^{\star}}}$.
\par
\bigskip
\par\noindent
b$^{\prime}$) The pair $({I}_{{{\kappa}\!^{^{_V}}\
\!\!\!\!\!\!\!_{a\!^{\star}_{_i}}}},{I}_{{{\kappa}\!^{^{_V}}\
\!\!\!\!\!\!\!_{a\!^{\star}_{_j}}}})$ is $V\circ V\circ V$-close for
each $j<i<\beta$.

\par
\smallskip
\par\noindent
If a cofinal linear subset of $A^{\star}$ has ordinal number the
limit ordinal $\beta$, then the process is over. If it is not the
case, there are elements $\gamma^{\star}$ and $\delta^{\star}$ of
$A^{\star}$such that $a^{\star}\!\!\!_{_{\beta}}<
\gamma^{\star}<\delta^{\star}$. Hence $a^{\star}_i<
\gamma^{\star}<\delta^{\star}$ for each $i<\beta$. But then, as in
the above case (E), there is a final segment

\begin{center} $I_{_{k\!\!^{^V}_{\delta^{\star}}}}=
\{x_{\rho(k\!^{^V}_{\delta^{\star}})}\vert \rho(k\!^{^V}_{\delta^{\star}})\geq
\widetilde{\rho}(k\!^{^V}_{\delta^{\star}})\}$ with
$\widetilde{\rho}(k\!^{^V}_{\delta^{\star}})\geq
\rho_{_V}(k\!^{^V}_{\delta^{\star}})$
 \end{center}
\par\noindent
of a $\tau_{_p}$-net
$(x_{\rho(k\!^{^V}_{\delta^{\star}})})_
{\rho(k\!^{^V}_{\delta^{\star}}) \in
P_{(k\!^{^V}_{\delta^{\star}})}} \in {{\mathcal{A}}}_{{_\xi}_{_{\delta^{\star}}}}$ such that:
\par
\smallskip
\par\noindent
a$^{\prime\prime}$)\ The pair
$(I_{_{k\!\!^{^V}_{\delta^{\star}}}},{\hat{I}}_{_{k_{_{a_{_i}^{\star}}}}})$
is $V\circ V$-close for each $i<\beta$ and all $k_{_{a_{_i}^{\star}}}$.
\par
\bigskip
\par\noindent
b$^{\prime\prime}$)\ The pair
$(I_{_{k\!\!^{^V}_{\delta^{\star}}}},{I}_{{{\kappa}\!^{^{_V}}\
\!\!\!\!\!\!\!_{a\!^{\star}_{_i}}}})$ is $V\circ V\circ V$-close for
each $i<\beta$.
\par
\smallskip
\par
We put ${I}_{{{\kappa}\!^{^{_V}}\
\!\!\!\!\!\!\!_{a\!^{\star}_{_{\beta}}}}}=I_{_{k\!\!^{^V}_{\delta^{\star}}}}$
and $G^{^W}_{_{
a^{\star}_{_{\beta}}}}=(\displaystyle\bigcup_{i<\beta}G^{^W}_{_{
a^{\star}_{_i}}})\cup {I}_{{{\kappa}\!^{^{_V}}\
\!\!\!\!\!\!\!_{a\!^{\star}_{_{\beta}}}}}$.
\par

\par
\smallskip
\par
It is evident that the above properties ($A_1$) to ($B_1$) are
extended for each $i<\beta$.
\par
\smallskip
\par
(4) {\it The continuation of the process}.
\par\smallskip\par\noindent
We continue the process until the end of $(\xi)$, that is until the
\textquotedblleft exhausting\textquotedblright of the elements of
$(\xi)$ and we form the set $G^{^W}=\cup \{G^{^W}_{_\beta}\vert
\beta$ an ordinal$\}$. Thus, we have extracted from $(\xi)$ a subnet
$(\xi_{{a_{_{i}}}})_{i\in I_{_W}}=(\xi^{^W})$.

We consider the set
\par

\begin{center}
$\widetilde{A}^{^V}=\{\widetilde{a}^{^V}\vert\ \widetilde{a}^{^V}=
\rho({\kappa}\!^{^{_V}}\
\!\!\!\!\!_{a\!^{\star}_{_{i}}})
\in P({\kappa}\!^{^{_V}}\
\!\!\!\!\!_{a\!^{\star}_{_{i}}}), \  i\in I_{_W}\}$
\end{center}
to which we give the following order:
\par \ \
$\widetilde{a}^{^V}=\rho({\kappa}\!^{^{_V}}\
\!\!\!\!\!_{a\!^{\star}_{_{j}}})\leq
\widetilde{a^{\prime}}^{^V}=\rho({\kappa}\!^{^{_V}}\
\!\!\!\!\!_{a\!^{\star}_{_{i}}})$
\par\noindent
if and only if

\par
\ \ \ \ \ \ \ \ \ (i) \ \ $a^{\star}_{j}<a^{\star}_{i}$\ \ \ \ or
\par
\ \ \ \ \ \ \ \ (ii) \ ${\kappa}\!^{^{_V}}\
\!\!\!\!\!_{a\!^{\star}_{_j}}={\kappa}\!^{^{_V}}\
\!\!\!\!\!_{a\!^{\star}_{_i}}$ and $\rho({\kappa}\!^{^{_V}}\
\!\!\!\!\!_{a\!^{\star}_{_i}}) =\rho^{\prime}({\kappa}\!^{^{_V}}\
\!\!\!\!\!_{a\!^{\star}_{_j}})\geq \rho({\kappa}\!^{^{_V}}\
\!\!\!\!\!_{a\!^{\star}_{_j}})\geq
\widetilde{\rho}({\kappa}\!^{^{_V}}\
\!\!\!\!\!_{a\!^{\star}_{_j}})=\widetilde{\rho}({\kappa}\!^{^{_V}}\
\!\!\!\!\!_{a\!^{\star}_{_i}})$ (see fig. 14).

\par

\par

\pspicture(0,0)(0,0)

\pscircle[linewidth=2pt](1.43,-1.30){0.2}

\multiput(.2,-1.52)(1.46,-1.30){1}{*}

\multiput(.5,-1.52)(1.46,-1.30){1}{*}

\multiput(.8,-1.52)(1.46,-1.30){1}{*}

\Rput[ul](.1,-1){$(\xi)$}

\multiput(2.54,-1.52)(1.46,-1.30){1}{*}

\multiput(2.94,-1.52)(1.46,-1.30){1}{*}

\multiput(3.24,-1.52)(1.46,-1.30){1}{*}

\multiput(5.24,-1.52)(1.46,-1.30){1}{*}

\multiput(4.94,-1.52)(1.46,-1.30){1}{*}

\multiput(5.6,-1.52)(1.46,-1.30){1}{*}

\psline{->}(6,-1.30)(6.4,-1.30)

\Rput[ul](1,-.6){$\xi_{_{a^{\star}_{_j}}}$}

\Rput[ul](3.8,-.6){$\xi_{_{a^{\star}_{_i}}}$}

\psline[linestyle=dotted]{<->}(1.93,-4.4)(2.47,-1.51)

\Rput[ul](1.35,-4.9){$\rho({\kappa}\!^{^{_V}}\
\!\!\!\!\!_{a\!^{\star}_{_j}})$}

\psline{-}(.8,-4.2)(1.37,-1.40)

\Rput[ul](.51,-4.3){$\star$}

\Rput[ul](.29,-4.9){${I}_{{{\kappa}\!^{^{_V}}\
\!\!\!\!\!\!\!_{a\!^{\star}_{_j}}}}$}

\psline{-}(3.67,-4.25)(4.26,-1.44)

\Rput[ul](3.39,-4.35){$\star$}

\Rput[ul](3.21,-4.9){${I}_{{{\kappa}\!^{^{_V}}\
\!\!\!\!\!\!\!_{a\!^{\star}_{_i}}}}$}

\qdisk(2.03,-3.8){2pt}

\Rput[ul](2.19,-3.7){$
\widetilde{a}^{^V}$}

\multiput(1.8,-3.88)(1.1,-3.7){1}{-}

\multiput(1.6,-3.88)(1.1,-3.7){1}{-}

\multiput(1.4,-3.88)(1.1,-3.7){1}{-}

\psline[linestyle=dotted]{<-}(1.1,-3.8)(1.14,-3.8)

\Rput[ul](.6,-3.8){$\star$}

\Rput[ul](-.4,-3.5){$
x_{_{{\rho}({\kappa}\!^{^{_V}}\
\!\!\!\!\!\!\!_{a\!^{\star}_{_j}})}}$}

\Rput[ul](2.8,-2){$
x_{_{{\rho}({\kappa}\!^{^{_V}}\
\!\!\!\!\!\!\!_{a\!^{\star}_{_i}})}}$}

\Rput[ul](7.3,-2){$
x_{_{\rho}({\kappa}\!^{^{_V}}\
\!\!\!\!\!\!\!_{a\!^{\star}_{_i}})}$}

\qdisk(5.22,-1.9){2pt}

\Rput[ul](5.43,-1.9){$
\widetilde{a^{\prime}}^{^V}$}

\multiput(5.02,-2)(1.1,-3.7){1}{-}

\multiput(4.82,-2)(1.1,-3.7){1}{-}

\multiput(4.62,-2)(1.1,-3.7){1}{-}

\psline[linestyle=dotted]{<-}(4.36,-1.91)(4.6,-1.91)

\Rput[ul](3.86,-1.91){$\star$}

\psline[linestyle=dotted]{<->}(4.7,-4.4)(5.3,-1.5)

\Rput[ul](4.27,-4.8){$\rho({\kappa}\!^{^{_V}}\
\!\!\!\!\!_{a\!^{\star}_{_i}})$}

\pscircle[linewidth=2pt](4.23,-1.30){0.2}

\pscircle[linewidth=2pt](8.93,-1.30){0.2}

\multiput(7.6,-1.52)(1.46,-1.30){1}{*}

\multiput(7.9,-1.52)(1.46,-1.30){1}{*}

\multiput(8.2,-1.52)(1.46,-1.30){1}{*}

\Rput[ul](7.6,-1){$(\xi)$}

\Rput[ul](8.5,-.6){$\xi_{_{a^{\star}_{_j}}}=\xi_{_{a^{\star}_{_i}}}$}

\multiput(10.1,-1.52)(1.46,-1.30){1}{*}

\multiput(9.8,-1.52)(1.46,-1.30){1}{*}

\multiput(9.5,-1.52)(1.46,-1.30){1}{*}

\psline{->}(10.46,-1.30)(10.9,-1.30)

\psline{-}(8.3,-4.2)(8.87,-1.40)

\Rput[ul](8.01,-4.3){$\star$}

\Rput[ul](6.79,-4.9){${I}_{{{\kappa}\!^{^{_V}}\
\!\!\!\!\!\!\!_{a\!^{\star}_{_j}}}}=
{I}_{{{\kappa}\!^{^{_V}}\
\!\!\!\!\!\!\!_{a\!^{\star}_{_i}}}}
$}

\Rput[ul](8.79,-4.9){$
\rho({\kappa}\!^{^{_V}}\
\!\!\!\!\!_{a\!^{\star}_{_j}})=\rho({\kappa}\!^{^{_V}}\
\!\!\!\!\!_{a\!^{\star}_{_i}})$}

\Rput[ul](6.95,-3.6){$
x_{_{\rho}({\kappa}\!^{^{_V}}\
\!\!\!\!\!\!\!_{a\!^{\star}_{_j}})}$}

\Rput[ul](8.51,-1.91){$\star$}

\Rput[ul](8.1,-3.8){$\star$}

\psline[linestyle=dotted]{<->}(9.2,-4.4)(9.8,-1.5)

\Rput[ul](4.27,-4.8){$\rho({\kappa}\!^{^{_V}}\
\!\!\!\!\!_{a\!^{\star}_{_i}})$}

\qdisk(9.72,-1.9){2pt}

\Rput[ul](9.83,-1.9){$
\widetilde{a^{\prime}}^{^V}$}

\multiput(9.47,-2)(1.1,-3.7){1}{-}

\multiput(9.27,-2)(1.1,-3.7){1}{-}

\psline[linestyle=dotted]{<-}(8.9,-1.91)(9.16,-1.91)

\qdisk(9.34,-3.8){2pt}

\Rput[ul](9.43,-3.8){$
\widetilde{a}^{^V}$}

\multiput(9.1,-3.9)(1.1,-3.7){1}{-}

\multiput(8.9,-3.9)(1.1,-3.7){1}{-}

\psline[linestyle=dotted]{<-}(8.6,-3.8)(8.7,-3.8)

\endpspicture

\par\bigskip\bigskip\bigskip\bigskip\bigskip\bigskip\bigskip\bigskip\bigskip\bigskip\bigskip\bigskip\bigskip
\bigskip\bigskip\bigskip\bigskip\bigskip
\begin{center}
Figure 14
\end{center}
\par\smallskip\par

\par
\smallskip
Thus, we have constructed the net $G^{^W}=(x_{{\rho}({\kappa}\!^{^{_V}}\
\!\!\!\!\!\!\!_{a\!^{\star}_{_i}})})_{_{ \rho({\kappa}\!^{^{_V}}\
\!\!\!\!\!\!\!_{a\!^{\star}_{_i}})\in \widetilde{A}^{^V}}}$. It is easy
to show that the validity of the Property $(A_1)$ for each ordinal $\beta$ imply that
$(x_{{\rho}({\kappa}\!^{^{_V}}\
\!\!\!\!\!\!\!_{a\!^{\star}_{_i}})})_{_{ \rho({\kappa}\!^{^{_V}}\
\!\!\!\!\!\!\!_{a\!^{\star}_{_i}})\in \widetilde{A}}}$ is a
$\tau_{_p}$-net for $W$
as well as
$(\phi(x_{{\rho}({\kappa}\!^{^{_V}}\
\!\!\!\!\!\!\!_{a\!^{\star}_{_i}})}))_{_{ \rho({\kappa}\!^{^{_V}}\
\!\!\!\!\!\!\!_{a\!^{\star}_{_i}})\in \widetilde{A}^{^V}}}$ and
$({\xi}_{a^\star})_{a^\star\in {A}^\star}$ are left cofinal
for $\overline{W}$ (see fig. 15).
\end{proof}

\par

\bigskip\bigskip\bigskip
\par

\par

\pspicture(0,0)(0,0)

\pscircle[linewidth=2pt](1.43,-1.30){0.2}

\pscircle[linewidth=2pt](3.33,-1.30){0.2}

\pscircle[linewidth=2pt](5.23,-1.30){0.2}

\multiput(1.94,-1.52)(1.46,-1.30){1}{*}

\multiput(2.24,-1.52)(1.46,-1.30){1}{*}

\multiput(2.54,-1.52)(1.46,-1.30){1}{*}

\multiput(4.64,-1.52)(1.46,-1.30){1}{*}

\multiput(4.34,-1.52)(1.46,-1.30){1}{*}

\multiput(4.04,-1.52)(1.46,-1.30){1}{*}

\multiput(6.36,-1.52)(1.46,-1.30){1}{*}

\multiput(6.08,-1.52)(1.46,-1.30){1}{*}

\multiput(5.80,-1.52)(1.46,-1.30){1}{*}

\psline{->}(6.8,-1.30)(7.4,-1.30)

\multiput(.2,-1.52)(1.46,-1.30){1}{*}

\multiput(.5,-1.52)(1.46,-1.30){1}{*}

\multiput(.8,-1.52)(1.46,-1.30){1}{*}

\Rput[ul](.1,-1){$(\xi^{^W})$}

\Rput[ul](1,-.6){$\xi_{_{a^{\star}_{_i}}}$}

\Rput[ul](3,-.6){$\xi_{_{a^{\star}_{_{i^{\prime}}}}}$}

\Rput[ul](5,-.6){$\xi_{_{a^{\star}_{_{i^{\prime\prime}}}}}$}

\Rput[ul](7,-.6){$(i\in I_{_W})$}

\psline{-}(1,-2.8)(1.37,-1.40)

\Rput[ul](.74,-2.8){$\star$}

\Rput[ul](.71,-3.4){${I}_{{{\kappa}\!^{^{_V}}\
\!\!\!\!\!\!\!_{a\!^{\star}_{_{i}}}}}$}

\psline{-}(3,-2.8)(3.37,-1.40)

\Rput[ul](2.74,-2.8){$\star$}

\Rput[ul](2.7,-3.4){${I}_{{{\kappa}\!^{^{_V}}\
\!\!\!\!\!\!\!_{a\!^{\star}_{_{i^{\prime}}}}}}$}

\psline{-}(4.8,-2.8)(5.17,-1.40)

\Rput[ul](4.54,-2.8){$\star$}

\Rput[ul](4.49,-3.4){${I}_{{{\kappa}\!^{^{_V}}\
\!\!\!\!\!\!\!_{a\!^{\star}_{_{i^{\prime\prime}}}}}}$}

\psline[linestyle=dotted]{->}(.1,-2.8)(9.4,-2.8)

\psline[linestyle=dotted]{->}(.1,-1.5)(9.4,-1.5)

\endpspicture

\par\bigskip\bigskip\bigskip\bigskip\bigskip\bigskip\bigskip\bigskip
\bigskip\bigskip\bigskip\bigskip\bigskip
\begin{center}
Figure 15
\end{center}
\par\bigskip\smallskip\par

\par
\smallskip
A similar demonstration gives the following:
\par

\begin{lemma}\label{a22}{\rm Let $({\eta}_{_{\beta^\star}})_{_{\beta^\star\in {B}^\star}}$ be a $\tau_{_q}$-conet
in $(\overline{X},\overline{\mathcal{U}})$ without end point such that
for each $\delta^{\star}>\beta^{\star}$, ${{\mathcal{A}}}_{_{\eta_{_{\delta^{\star}}}}}\nsubseteq {{\mathcal{A}}}_{_{\eta_{_{\beta^{\star}}}}}$.
Let also $W\in \mathcal{U}$ be an entourage in $\mathcal{U}_{_0}$.
Then, there is a $\tau_{_q}$-conet for
$W$, $(s_{_\mu})_{_{\mu\in M}}$, such that the nets
$({\eta}_ {\beta^\star})_{\beta^\star\in {B}^\star}$ and
$(\phi(s_{_\mu})){_{_{\mu\in M}}}$ are right cofinal for $W$.}
\end{lemma}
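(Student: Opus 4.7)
The plan is to run the construction of Lemma \ref{a21} with the roles of the two classes of a $\tau$-cut interchanged and with the orientation of closeness reversed. I would fix $V, W\in \mathcal{U}_{_0}$ with $V\circ V\circ V\subseteq W$, let $\eta_{_{\beta^{\star}_{_{\overline{V}}}}}$ be the extreme point of $(\eta_{_{\beta^\star}})_{_{\beta^\star\in B^\star}}$ for $\overline{V}$, and for each selected index $\beta^\star_{_i}>\beta^{\star}_{_{\overline{V}}}$ pick a $\tau_{_q}$-conet $(y_{_{\sigma(k\!^{^{_V}}\!\!\!\!\!_{\beta\!^{\star}_{_i}})}})$ of $\mathcal{B}_{_{\eta_{_{\beta^\star_i}}}}$ together with a concrete final segment $I_{k\!^{^V}\!\!\!\!\!_{\beta\!^{\star}_{_i}}}$ past its $V$-extreme index. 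The dual of Definition \ref{a15} for $\overline{W}$ applied to the conjugate direction (equivalently, what it means to say $(\eta_{_{\delta^\star}},\eta_{_{\beta^\star}})\in\overline{V}$ via a $\tau_{_q}$-conet witness in $\mathcal{B}_{_{\eta_{_{\beta^\star}}}}$) provides the \emph{right} analogue of Step~(A)--(B) in Lemma \ref{a21}.

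Then I mimic the transfinite induction. At the successor step from $\beta$ to $\beta+1$, using that $(\eta_{_{\beta^\star}})$ has no end point and $\mathcal{A}_{_{\eta_{_{\delta^{\star}}}}}\nsubseteq \mathcal{A}_{_{\eta_{_{\beta^{\star}}}}}$ whenever $\delta^\star>\beta^\star$ (which, together with the duality, guarantees that the $\tau_{_q}$-conet data of $\eta_{_{\delta^\star}}$ are not merely contained in those of $\eta_{_{\beta^\star}}$), I pick $\epsilon^\star>\gamma^\star>\beta^\star_{_{\beta}}$ and apply the witness property of $\overline{V}$ twice: once to $(\eta_{_{\epsilon^\star}},\eta_{_{\gamma^\star}})\in\overline{V}$ and once to $(\eta_{_{\gamma^\star}},\eta_{_{\beta^\star_{_i}}})\in\overline{V}$ for each $i\leq\beta$. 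Composing the resulting $V$-close pairs yields the dual versions of (D)(a) ($V\circ V$-closeness between $I_{k\!^{^V}\!\!\!\!\!_{\beta\!^{\star}_{_{\beta+1}}}}$ and final segments $\hat I_{k_{\beta^\star_i}}$ of \emph{every} $\tau_{_q}$-conet of $\eta_{_{\beta^\star_i}}$) and (D)(b) ($V\circ V\circ V$-closeness between the chosen $I_{k\!^{^V}\!\!\!\!\!_{\beta\!^{\star}_{_{\beta+1}}}}$ and $I_{k\!^{^V}\!\!\!\!\!_{\beta\!^{\star}_{_i}}}$). At limit ordinals I run the same exhaustion argument; when $(\eta_{_{\beta^\star}})$ is finally exhausted, I assemble the set $\widetilde B^{^V}$ of pairs $(\beta^\star_{_i},\sigma)$ with the dual of the lexicographic order used in Lemma \ref{a21}, producing a net $(s_{_\mu})_{_{\mu\in M}}$.

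By property (b) of the induction, the elements of $(s_{_\mu})$ satisfy $(s_{_{\mu'}},s_{_{\mu}})\in V\circ V\circ V\subseteq W$ for $\mu'\geq\mu$ sufficiently large, so $(s_{_\mu})$ is a $\tau_{_q}$-net for $W$; property (a), combined with Proposition \ref{a18} (dual form), then shows that for every $\beta^\star$ in the chosen cofinal subnet there is some $\mu$ such that $(\phi(s_{_{\mu'}}),\eta_{_{\beta^\star}})\in\overline{W}$ for all $\mu'\geq\mu$, and conversely for every $\mu$ some $\beta^\star$ witnessing the other half of Definition \ref{a10}, i.e.\ right cofinality of $(\eta_{_{\beta^\star}})$ and $(\phi(s_{_\mu}))$ for $\overline{W}$. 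Since the original $(\eta_{_{\beta^\star}})$ and its extracted subnet are right cofinal by Proposition \ref{a11} and the dual of Proposition \ref{a12} transfers right cofinality through subnets, the conclusion holds for the full net. The only real obstacle is the notational one of mirroring the bookkeeping of $I_{\kappa\!^{^V}\!\!\!\!\!_{a\!^\star_{_i}}}$, $\hat I_{k_{a_i^\star}}$, and $G_{_{a^\star_i}}$ in the conjugate direction; once the reading of $W$-closeness is consistently reversed, the transfinite recursion and the cofinality verification proceed exactly as in Lemma \ref{a21}.
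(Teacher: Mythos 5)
Your overall plan --- rerun the transfinite construction of Lemma \ref{a21} in the conjugate direction --- is exactly what the paper intends: it offers no separate argument for Lemma \ref{a22} beyond the remark that ``a similar demonstration'' gives it. But there is a genuine gap in where you propose to get the raw material for the construction. Definition \ref{a15} is \emph{not} self-dual: the entourage $\overline{W}$ is defined purely in terms of the first classes, and its clause (2) only ever supplies a $\tau_{_p}$-net witness lying in ${\mathcal{A}}_{\xi^{\prime}}$. There is no ``dual of Definition \ref{a15}'' anywhere in the paper that characterizes membership in $\overline{V}$ by a $\tau_{_q}$-conet witness in ${\mathcal{B}}_{\eta_{_{\beta^\star}}}$, and $(\overline{V})^{-1}$ is merely the set-theoretic inverse, still described by ${\mathcal{A}}$-classes. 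So the step in which you ``pick a $\tau_{_q}$-conet of ${\mathcal{B}}_{\eta_{_{\beta^\star_i}}}$ together with a concrete final segment'' and then invoke the analogue of (A)--(B) has nothing to feed it: the $\overline{V}$-relations along $({\eta}_{_{\beta^\star}})_{_{\beta^\star\in B^\star}}$ carry no closeness information about the ${\mathcal{B}}$-classes at all. Indeed the hypothesis ${\mathcal{A}}_{_{\eta_{_{\delta^{\star}}}}}\nsubseteq {\mathcal{A}}_{_{\eta_{_{\beta^{\star}}}}}$ is phrased in terms of first classes precisely so as to exclude clause (1) of Definition \ref{a15} for the pair $(\eta_{_{\beta^\star}},\eta_{_{\delta^\star}})$ and force clause (2), which hands you a $\tau_{_p}$-net of ${\mathcal{A}}_{_{\eta_{_{\beta^\star}}}}$; the desired $(s_{_\mu})_{_{\mu\in M}}$ must be assembled from final segments of such $\tau_{_p}$-nets.

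A second, related problem is the orientation of your entourage relations. Since $({\eta}_{_{\beta^\star}})$ is a $\tau_{_q}$-net in $\overline{X}$, what Definition \ref{a1} gives you is $(\eta_{_{\beta^\star}},\eta_{_{\delta^\star}})\in\overline{V}$ for $\delta^\star\geq\beta^\star$ past the extreme index --- earlier index in the first slot --- not $(\eta_{_{\delta^\star}},\eta_{_{\beta^\star}})\in\overline{V}$ as you write; likewise the property you derive at the end, $(s_{_{\mu^{\prime}}},s_{_{\mu}})\in W$ for $\mu^{\prime}\geq\mu$, is the $\tau_{_p}$-net condition, not the $\tau_{_q}$-net condition you need. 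The correct orientation matters structurally: clause (2) now places the witness in the class of the \emph{earlier} point, finally $V$-ahead of \emph{all} $\tau_{_p}$-nets of the later point, which is the reverse of Lemma \ref{a21}, where each newly adjoined point supplies the witness relating it to everything already fixed. Hence at each stage of the induction the new closeness data is attached to the points already processed rather than to the new one, and you must explain how the segments fixed at earlier stages are related to the (possibly different) witnesses that clause (2) produces for each new pair. ``Consistently reversing the reading of $W$-closeness'' does not by itself resolve this, so as written the transfinite recursion does not go through.
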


\begin{lemma}\label{a23}{\rm Let $({\xi}_ {a^\star})_{a^\star\in
{A}^\star}$ be a $\tau_{_p}$-net in $(\overline{X},\overline{\mathcal{U}})$
without end point such that for each $\gamma^{\star}>a^{\star}$,
${{\mathcal{A}}}_{_{\xi_{_{a^{\star}}}}}\nsubseteq {{\mathcal{A}}}_{_{\xi_{_{\gamma^{\star}}}}}$. Then, there is a $\tau_{_p}$-net
$(t_{_\lambda})_{_{\lambda\in \Lambda}}$ of $(X,{\mathcal{U}})$ such that
the nets $({\xi}_{a^\star})_{a^\star\in {A}^\star}$ and
$(\phi(t_{_\lambda})){_{_{\lambda\in \Lambda}}}$ are left cofinal.}
\end{lemma}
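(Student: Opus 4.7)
The strategy is to weave the conclusions of Lemma~\ref{a21} together across all entourages simultaneously, producing a single net that is a $\tau_{_p}$-net in the full sense of Definition~\ref{a1} (not merely for one fixed $W$) and left cofinal with $({\xi}_{a^\star})$ in the unqualified sense of Definition~\ref{a10}. For each entourage, Lemma~\ref{a21} already supplies a candidate net $G^W$ that is a $\tau_{_p}$-net for $W$ and left cofinal with $({\xi}_{a^\star})$ for $\overline{W}$; what is needed is a diagonal upgrade. I will take $W_{_0}=V_{_0}$, where $V_{_0}$ is the entourage launching the induction, so that the left cofinal condition of Definition~\ref{a10} reduces to establishing left cofinality for every $W\subseteq W_{_0}$.

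The plan is to redo the transfinite induction of Lemma~\ref{a21} along a subnet $({\xi}_{_{a^\star_{_i}}})_{_{i\in I}}$ of $({\xi})$, but at each stage $i$ pick once and for all a $\tau_{_p}$-net $(x^{i}_{_\rho})_{_{\rho\in P_i}}$ of ${\xi}_{_{a^\star_{_i}}}$ together with a family of extreme indices $\{\rho^{i}_{_V}\}_{_{V\in\mathcal{U}_{_0}}}$ that is antitone in $V$ under reverse inclusion and satisfies, for every $V\in\mathcal{U}_{_0}$, the $V$-close compatibility conditions $(A_{_1})$ of the proof of Lemma~\ref{a21} relative to the previously chosen tails $\{x^{j}_{_\rho}:\rho\geq\rho^{j}_{_V}\}$ for all $j<i$. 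Setting $\Lambda=\{(i,\rho):i\in I,\rho\in P_{_i}\}$ with the lexicographic order (first on $I$, then on $P_{_i}$) and $t_{_{(i,\rho)}}=x^{i}_{_\rho}$ produces the candidate $\tau_{_p}$-net.

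For the verifications: given $U\in\mathcal{U}$, pick $V\in\mathcal{U}_{_0}$ with $V\circ V\circ V\subseteq U$; past the threshold $(i_{_V},\rho^{i_{_V}}_{_V})$, the $V\circ V\circ V$-close conclusion $(A_{_1})$(b) of Lemma~\ref{a21} ensures $(t_{_{\lambda}},t_{_{\lambda^\prime}})\in V\circ V\circ V\subseteq U$ whenever $\lambda^\prime\geq\lambda$ lie past this threshold, so $(t_{_\lambda})$ is a $\tau_{_p}$-net. For the left cofinality with $({\xi}_{a^\star})$: given $W\subseteq W_{_0}$, pick $V$ with $V\circ V\circ V\subseteq W$; the tail of $(\phi(t_{_\lambda}))$ past the stage-$V$ threshold then replays the argument of Lemma~\ref{a21} and is left cofinal with $({\xi}_{a^\star})$ for $\overline{W}$, and Proposition~\ref{a11} upgrades this to left cofinality of the full net with $({\xi}_{a^\star})$ for $\overline{W}$.

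The main obstacle is the consistency of the inductive choices: at each stage $i$, the single $\tau_{_p}$-net $(x^{i}_{_\rho})$ must simultaneously support closeness conditions with respect to every earlier stage $j<i$ and every $V\in\mathcal{U}_{_0}$. I would handle this by well-ordering a cofinal subfamily of $\mathcal{U}_{_0}$ and iteratively enlarging $\rho^{i}_{_V}$ as $V$ shrinks along that well-order, exploiting the fact that $(x^{i}_{_\rho})_{_\rho}$ is a $\tau_{_p}$-net for every entourage, so arbitrarily deep tails remain available; a secondary care is needed at limit stages of the $I$-induction, where the analogue of step~(3) in the proof of Lemma~\ref{a21} must be run once per $V$ in a compatible fashion.
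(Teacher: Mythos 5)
Your overall strategy --- diagonalize the per-entourage construction of Lemma \ref{a21} into a single net --- is the right instinct and is also what the paper does, but the specific way you set up the diagonal contains a genuine gap. You propose to pick, at each stage $i$, ``once and for all'' a $\tau_{_p}$-net $(x^{i}_{_\rho})_{_{\rho\in P_i}}$ of $\xi_{_{a^{\star}_{_i}}}$ and then only to enlarge the extreme indices $\rho^{i}_{_V}$ as $V$ shrinks. This presupposes that a single element of ${\mathcal{A}}_{_{\xi_{_{a^{\star}_{_i}}}}}$ can witness the closeness conditions $(A_{_1})$ for every entourage $V$ simultaneously. But the only source of those conditions is clause (2) of Definition \ref{a15}: from $(\xi_{_{a^{\star}_{_i}}},\xi_{_{\gamma^{\star}}})\in\overline{V}$ one obtains \emph{some} $\tau_{_p}$-net of $\xi_{_{a^{\star}_{_i}}}$ that is $V$-ahead of all $\tau_{_p}$-nets of $\xi_{_{\gamma^{\star}}}$, and this witness genuinely depends on $V$. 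Two members of the same class ${\mathcal{A}}_{_{\xi_{_{a^{\star}_{_i}}}}}$ need not be mutually close in any entourage, so a tail of your fixed net $(x^{i}_{_\rho})$ --- however deep --- need not inherit the $V\circ V$-closeness to the earlier stages that the $V$-witness enjoys: deep tails of a fixed $\tau_{_p}$-net give only \emph{internal} coherence, not the cross-stage relation to \emph{all} nets of the earlier points. The paper is explicit about this obstruction: the footnote in its proof of Lemma \ref{a23} warns that $a^{\star}_{_{i(W')}}=a^{\star}_{_{i(W)}}$ does not imply that the chosen $\tau_{_p}$-nets $\kappa^{V'}$ and $\kappa^{V}$ at that point coincide. (Compare also Remark \ref{a7}: uniformity of such witnesses across entourages is essentially the quietness property, which is not assumed here.)

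Because of this, the paper takes a different diagonal: it keeps all the $V$-dependent choices, forms $G=\bigcup\{G^{W}\mid W\in{\mathcal{U}}\}$, and the real work of the proof is to define a directed preorder on the union index set $\overline{A}$ (cases (i) and (ii), with the compatibility $W_{\pi}\subseteq W_{\sigma}\Rightarrow V_{\pi}\subseteq V_{\sigma}$) and then to verify, by combining the relations (1)--(3) between final segments chosen for \emph{different} entourages, that $G$ is a $\tau_{_p}$-net. That is precisely the step your proposal assumes away. To repair your argument you would either have to prove that a uniform witness exists at each stage (false in general), or reintroduce one chosen net per stage \emph{and per entourage} --- at which point you are back to the paper's union construction and must still supply the ordering of the amalgamated index set and the mixed-entourage estimates.
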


\begin{proof} Let $(\xi)=({\xi}_{a^\star})_{a^\star\in {A}^\star}$ be as above. By the construction
of Lemma \ref{a21}, for each $W\in {\mathcal{U}}$, we consider: (1) The subnet
$(\xi^{^W})=(\xi_{{a^{\star}_{_{i}}}})_{i\in
I_{_W}}=(\xi_{{a^{\star}_{_{i(W)}}}})_{i(W)\in I_{_W}}$ of
$(\xi)$; (2) The net $G^{^W}=(x_{{\rho}({\kappa}\!^{^{_V}}\
\!\!\!\!\!\!\!_{a\!^{\star}_{_{i(W)}}})})_{_{
\rho({\kappa}\!^{^{_V}}\ \!\!\!\!\!\!\!_{a\!^{\star}_{_{i(W)}}})\in
\widetilde{A}^{^V}}}$ of $X$ which corresponds to a $W\in {\mathcal{U}}_{_0}$
($V\in {\mathcal{U}}_{_0}$, $V\circ V\circ V\subseteq W$);\footnote{We
remind that the index $V$ in the form ${\kappa}\!^{^{_V}}\
\!\!\!\!\!\!\!_{a\!^{\star}_{_{i(W)}}}$ refers to the different
$\tau_{_p}$-nets which have been chosen in the concrete points
$\xi_{_{a^{\star}}}$ in the process of Lemma \ref{a21} with $W$
(and hence $V$) changeable.
If $W^{\prime}\neq W$, then
$a^{\star}_{_{i(W^{\prime})}}
=a^{\star}_{_{i(W)}}=a^{\star}$ does not imply
${\kappa}\!^{^{_{V\!^{^\prime}}}}\
\!\!\!\!\!\!\!_{a\!^{\star}_{_{i(W\!^{^\prime})}}}={\kappa}\!^{^{_{V}}}\
\!\!\!\!\!\!\!_{a\!^{\star}_{_{i(W)}}}$
in general.} (3) The
final segments ${I}_{{{\kappa}\!^{^{_V}}\
\!\!\!\!\!\!\!_{a\!^{\star}_{_{i(W)}}}}}$ as well as their extreme
points $x_{\widetilde{\rho}({\kappa}\!^{^{_V}}\
\!\!\!\!\!\!\!_{a\!^{\star}_{_{i(W)}}})}$
($\widetilde{\rho}({\kappa}\!^{^{_V}}\
\!\!\!\!\!_{a\!^{\star}_{_{i(W)}}})\geq
\rho_{_V}({\kappa}\!^{^{_V}}\ \!\!\!\!\!_{a\!^{\star}_{_{i(W)}}})$).

\par
\bigskip
\par
\par
(1) {\it The construction of the desired $\tau_{_p}$-net $G$.}
\par\noindent

\par
\smallskip
\par\noindent
We put $G=\bigcup \{ G^{^W}\ \vert W \in {\mathcal{U}} \}.$
\par\noindent

\par
We consider the set
\par
\begin{center}
$\overline{A}=\{\overline{a}=\rho({\kappa}\!^{^{_{V}}}\
\!\!\!\!\!_{a\!^{\star}_{_{i(W)}}})\vert
\rho({\kappa}\!^{^{_{V}}}\
\!\!\!\!\!_{a\!^{\star}_{_{i(W)}}})\geq \widetilde{\rho}({\kappa}\!^{^{_{V}}}\
\!\!\!\!\!_{a\!^{\star}_{_{i(W)}}}), W\in \mathcal{U}\}$
\end{center}
\par\noindent
to which we give the following order:
\begin{center}
$\overline{a}=\rho_{_0}({\kappa}\!^{^{_{V_{_\sigma}}}}\
\!\!\!\!\!_{a\!^{\star}_{_{i(W_{_\sigma})}}})\leq \overline{a^{\prime}}=
\rho_{_1}({\kappa}\!^{^{_{V_{_\pi}}}}\
\!\!\!\!\!_{a\!^{\star}_{_{i(W_{_\pi})}}})$
\end{center}
\smallskip
\par\noindent
if and only if
\par\smallskip
\par\noindent
(i) $a^{\star}_{i(W_{_\sigma})}<a^{\star}_{i(W_{_\pi})}$ and there exists a point
$\xi_{_{\gamma^{\star}}}=\xi_{_{a^{\star}_{_{i_{_{\gamma^{\star}}}(W_{_\pi})}}}}\in
(\xi_{_{a^{\star}_{_{i(W_{_\pi})}}}})_{i(W_{_\pi})\in I_{_{W_\pi}}}$
such that $a^{\star}_{i(W_{_\sigma})}\leq
a^{\star}_{i_{_{\gamma^{\star}}}(W_{_\pi})}<a^{\star}_{i(W_{_\pi})}$,
(it is assumed that if $W_\pi\subseteq W_\sigma$, then
$V_\pi\subseteq V_\sigma$ as well) (see fig. 16)

\par

or
\par
\smallskip
\par\noindent
(ii) $a^{\star}_{i(W_{_\pi})}=a^{\star}_{i(W_{_\sigma})}$,
${\kappa}\!^{^{_{V_{_\pi}}}}\ \!\!\!\!\!\!\!_{a\!^{\star}_{_{i(W_{_\pi})}}}=
{\kappa}\!^{^{_{V_{_\sigma}}}}\ \!\!\!\!\!\!\!_{a\!^{\star}_{_{i(W_{_\sigma})}}}$ and
$\overline{a^{\prime}}=
\rho_{_1}({\kappa}\!^{^{_{V_{_\pi}}}}\ \!\!\!\!\!\!\!_{a\!^{\star}_{_{i(W_{_\pi})}}})=
\rho_{_2}({\kappa}\!^{^{_{V_{_\sigma}}}}\ \!\!\!\!\!\!\!\!_{a\!^{\star}_{_{i(W_{_\sigma})}}})\geq
\rho_{_0}({\kappa}\!^{^{_{V_{_\sigma}}}}\
\!\!\!\!\!\!\!\!_{a\!^{\star}_{_{i(W_{_\sigma})}}})$ (see fig. 17).

It is clear that $\overline{A}$ is nonempty. The relation $\leq$ is
a right filtering preorder and hence the index-set $\overline{A}$ is
directed. We only prove the transitivity of $\leq$, the rest are
trivial. Indeed, let
$\overline{c}=\rho^{\ast}({\kappa}\!^{^{_{V_{_\theta}}}}\
\!\!\!\!\!\!\!_{a\!^{\star}_{_{i(W_{_\theta})}}})$ be
such that $\overline{a}\leq \overline{a^{\prime}}$ and
$\overline{a^{\prime}}\leq \overline{c}$. The only interesting case
is if
$a^{\star}_{i(W_{_\sigma})}<a^{\star}_{i(W_{_\pi})}<a^{\star}_{i(W_{_\theta})}$
with $V_\theta\subseteq V_\pi\subseteq V_\sigma$. Then, there are
$\xi^{\star}_{\gamma^{\star}}\in
(\xi_{_{a^{\star}_{_{i(W_{_\pi})}}}})_{i(W_{_\pi})\in I_{_{W_\pi}}}$
 and $\xi^{\star}_{\delta^{\star}}\in
(\xi_{_{a^{\star}_{_{i(W_{_\theta})}}}})_{i(W_{_\theta})\in
I_{_{W_\theta}}}$
 such that
$a^{\star}_{i(W_{_\sigma})}\leq
\gamma^{\star}<a^{\star}_{i(W_{_\pi})}\leq
\delta^{\star}<a^{\star}_{i(W_{_\theta})}$. Because of the existence
of that $\delta^{\star}$, we have $\overline{a^{\prime}}\leq
\overline{c}$.

\par

\vfill\eject

\par

\pspicture(0,0)(0,0)

\pscircle[linewidth=2pt](1.43,-1.30){0.2}

\pscircle[linewidth=2pt](3.33,-1.30){0.2}

\pscircle[linewidth=2pt](5.23,-1.30){0.2}

\multiput(1.94,-1.52)(1.46,-1.30){1}{*}

\multiput(2.24,-1.52)(1.46,-1.30){1}{*}

\multiput(2.54,-1.52)(1.46,-1.30){1}{*}

\multiput(4.64,-1.52)(1.46,-1.30){1}{*}

\multiput(4.34,-1.52)(1.46,-1.30){1}{*}

\multiput(4.04,-1.52)(1.46,-1.30){1}{*}

\multiput(6.36,-1.52)(1.46,-1.30){1}{*}

\multiput(6.08,-1.52)(1.46,-1.30){1}{*}

\multiput(5.80,-1.52)(1.46,-1.30){1}{*}

\psline{->}(6.8,-1.30)(7.4,-1.30)

\multiput(.2,-1.52)(1.46,-1.30){1}{*}

\multiput(.5,-1.52)(1.46,-1.30){1}{*}

\multiput(.8,-1.52)(1.46,-1.30){1}{*}

\Rput[ul](.1,-1){$(\xi^{^{W_\pi}})$}

\Rput[ul](5,-.6){$\xi_{_{a^{\star}_{_{i(W_\pi)}}}}$}

\Rput[ul](3,-.6){$\xi_{_{a^{\star}_{_{i_{_{\gamma^{\star}}}(W_\pi)}}}}$}

\Rput[ul](1,-.6){$\xi_{_{a^{\star}_{_{i^{\prime}(W_\pi)}}}}$}

\Rput[ul](5.8,-2.5){$
\rho({\kappa}\!^{^{_{V_\pi}}}\
\!\!\!\!\!_{a\!^{\star}_{_{i(W_\pi)}}})$}

\psline[linestyle=dotted]{-}(5.1,-1.8)(5.15,-1.5)

\psline[linestyle=dotted]{-}(5.3,-1.9)(5.8,-1.85)

\psline{<-}(5.2,-1.91)(5.3,-1.91)

\Rput[ul](4.8,-1.9){$\star$}

\qdisk(5.1,-2.1){1pt}

\qdisk(5.15,-2.2){1pt}

\qdisk(5,-2.3){1pt}

\qdisk(5.07,-2.4){1pt}

\Rput[ul](4.8,-2.6){$\star$}

\qdisk(5.87,-1.8){2pt}

\Rput[ul](5.9,-1.78){$\overline{a^{\prime}}$}

\multiput(5.7,-3)(1.1,-3.7){1}{-}

\multiput(5.9,-3)(1.1,-3.7){1}{-}

\multiput(6.1,-3)(1.1,-3.7){1}{-}

\psline{->}(6.3,-2.91)(6.5,-2.91)

\multiput(4.3,-3)(1.1,-3.7){1}{-}

\multiput(4.1,-3)(1.1,-3.7){1}{-}

\multiput(3.9,-3)(1.1,-3.7){1}{-}

\psline{<-}(3.55,-2.91)(3.75,-2.91)

\Rput[ul](4.5,-3.1){$
{\kappa}\!^{^{_{V_{_\pi}}}}\
\!\!\!\!\!_{a\!^{\star}_{_{i(W_{_\pi})}}}$}

\Rput[ul](8,-4.6){($i({W_{\sigma}})\in I_{_{W_\sigma}})$}

\Rput[ul](7,-.6){($i({W_{\pi}})\in I_{_{W_\pi}})$}

\pscircle[linewidth=2pt](2.13,-5.30){0.2}

\pscircle[linewidth=2pt](4.83,-5.30){0.2}

\pscircle[linewidth=2pt](6.93,-5.30){0.2}

\Rput[ul](.6,-4.8){$(\xi^{^{W_\sigma}})$}

\Rput[ul](3,-4.78){$\xi_{_{a^{\star}_{_{i_{_{\gamma^{\star}}}(W_\pi)}}}}$}

\multiput(1.44,-5.52)(1.46,-1.30){1}{*}

\multiput(1.1,-5.52)(1.46,-1.30){1}{*}

\multiput(.74,-5.52)(1.46,-1.30){1}{*}

\psline[linestyle=dotted]{-}(2.05,-5.5)(2,-6)

\psline[linestyle=dotted]{-}(3.3,-1.4)(3.3,-4.38)

\multiput(3.64,-5.52)(1.46,-1.30){1}{*}

\multiput(3.34,-5.52)(1.46,-1.30){1}{*}

\multiput(3.04,-5.52)(1.46,-1.30){1}{*}

\multiput(6.06,-5.52)(1.46,-1.30){1}{*}

\multiput(5.78,-5.52)(1.46,-1.30){1}{*}

\multiput(5.50,-5.52)(1.46,-1.30){1}{*}

\psline{->}(6.8,-1.30)(7.4,-1.30)

\multiput(7.4,-5.52)(1.46,-1.30){1}{*}

\multiput(7.7,-5.52)(1.46,-1.30){1}{*}

\multiput(8,-5.52)(1.46,-1.30){1}{*}

\psline{->}(8.45,-5.32)(9.1,-5.32)

\Rput[ul](.1,-1){$(\xi^{^{W_\pi}})$}

\Rput[ul](1.8,-4.7){$\xi_{_{a^{\star}_{_{i(W_\sigma)}}}}$}

\Rput[ul](4.5,-4.7){$\xi_{_{a^{\star}_{_{i^{\prime}(W_\sigma)}}}}$}

\Rput[ul](6.5,-4.7){$\xi_{_{a^{\star}_{_{i^{\prime\prime}(W_\sigma)}}}}$}

\Rput[ul](3,-6.5){$
\rho({\kappa}\!^{^{_{V_\sigma}}}\
\!\!\!\!\!_{a\!^{\star}_{_{i(W_\sigma)}}})$}

\qdisk(2.1,-6.1){1pt}

\qdisk(2.15,-6.2){1pt}

\qdisk(2,-6.3){1pt}

\qdisk(2.07,-6.4){1pt}

\Rput[ul](1.8,-6.6){$\star$}

\psline[linestyle=dotted]{<->}(5.8,-2.7)(5.9,-1.5)

\psline[linestyle=dotted]{<->}(3,-6.7)(3.15,-5.5)

\psline{<-}(3.55,-2.91)(3.75,-2.91)

\Rput[ul](4.5,-3.1){$
{\kappa}\!^{^{_{V_{_\pi}}}}\
\!\!\!\!\!_{a\!^{\star}_{_{i(W_{_\pi})}}}$}

\multiput(5.7,-3)(1.1,-3.7){1}{-}

\multiput(5.9,-3)(1.1,-3.7){1}{-}

\multiput(6.1,-3)(1.1,-3.7){1}{-}

\psline{->}(6.3,-2.91)(6.5,-2.91)

\multiput(4.3,-3)(1.1,-3.7){1}{-}

\multiput(4.1,-3)(1.1,-3.7){1}{-}

\multiput(3.9,-3)(1.1,-3.7){1}{-}

\Rput[ul](4.4,-1.8){$x_{_{\overline{a^{\prime}}}}$}

\Rput[ul](1.3,-6){$x_{_{\overline{a}}}$}

\Rput[ul](1.7,-6){$\star$}

\qdisk(3.1,-6){2pt}

\psline[linestyle=dotted]{<-}(2.1,-5.98)(3,-6)

\Rput[ul](3.1,-5.98){$\overline{a}$}

\multiput(2.7,-7.2)(1.1,-3.7){1}{-}

\multiput(2.9,-7.2)(1.1,-3.7){1}{-}

\multiput(3.1,-7.2)(1.1,-3.7){1}{-}

\psline{->}(3.3,-7.11)(3.5,-7.11)

\multiput(1.3,-7.2)(1.1,-3.7){1}{-}

\multiput(1.1,-7.2)(1.1,-3.7){1}{-}

\multiput(.9,-7.2)(1.1,-3.7){1}{-}

\psline{<-}(.55,-7.11)(.75,-7.11)

\Rput[ul](1.5,-7.3){$
{\kappa}\!^{^{_{V_{_\sigma}}}}\
\!\!\!\!\!_{a\!^{\star}_{_{i(W_{_\sigma})}}}$}

\endpspicture

\par

\bigskip\bigskip\bigskip\bigskip\bigskip\bigskip\bigskip
\bigskip\bigskip\bigskip\bigskip\bigskip\bigskip\bigskip\bigskip\bigskip\bigskip\bigskip\bigskip
\begin{center}
Figure 16
\end{center}

\bigskip
\par
(2){\it The proof that $G$ is a $\tau_{_p}$-net.}
\par\noindent

\par
Let $W\in {\mathcal{U}}_{_0}$. Suppose that $V=W_1\in {\mathcal{U}}_{_0}$ is
the corresponded entourage to $W$ from the lemma \ref{a21}. Similarly we
consider the entourage $V_1$ which corresponds to $W_1$. There holds
$V\circ V\circ V\subseteq W $ and $V_1\circ V_1\circ V_1\subseteq
W_1 $. Let $a^{\star}_{_{\overline{V}_{_1}}}$ be the extreme index
of $(\xi)$ for $\overline{V}_{_1}$. Let $a^{\star}_{_\lambda}\geq
{a^{\star}}_{_{\overline{V}_{_1}}}$ with
$\xi_{_{a^{\star}_{_\lambda}}}=\xi_{_{a^{\star}_{_{i_{_0}(W_{_1})}}}}\in
(\xi_{_{a^{\star}_{_{i(W_{_1})}}}})_{_{i(W_{_1})\in I_{_{W_{_1}}}}}$
and let
$\overline{a^{\prime}}=\rho_{_1}({\kappa}\!^{^{_{V_{_\pi}}}}\
\!\!\!\!\!_{a\!^{\star}_{_{i(W_{_\pi})}}})\geq
\overline{a}=\rho_{_0}({\kappa}\!^{^{_{V_{_\sigma}}}}\
\!\!\!\!\!_{a\!^{\star}_{_{i(W_{_\sigma})}}})\geq \overline{a}_{_0}=
\widetilde{\rho}({\kappa}\!^{^{_{V_{_1}}}}\
\!\!\!\!\!\!\!_{a\!^{\star}_{_{i_{_0}(W_{_1})}}})$ (there
holds $a^{\star}_{_{\overline{V}}}\leq
a^{\star}_{i_{_0}(W_{_1})}\leq a^{\star}_{i(W_{_\sigma})}\leq
a^{\star}_{i(W_{_\pi})}$ and $W_\pi\subseteq W_\sigma \subseteq
W_{_1}\subseteq W$ ). We distinguish two cases: (i)
$a^{\star}_{i(W_{_\pi})}=a^{\star}_{i(W_{_\sigma})}$,
${\kappa}\!^{^{_{V_{_\pi}}}}\ \!\!\!\!\!\!\!_{a\!^{\star}_{_{i(W_{_\pi})}}}=
{\kappa}\!^{^{_{V_{_\sigma}}}}\ \!\!\!\!\!\!\!_{a\!^{\star}_{_{i(W_{_\sigma})}}}$ and
$\rho_{_1}({\kappa}\!^{^{_{V_{_\pi}}}}\ \!\!\!\!\!\!\!_{a\!^{\star}_{_{i(W_{_\pi})}}})=
\rho_{_2}({\kappa}\!^{^{_{V_{_\sigma}}}}\ \!\!\!\!\!\!\!\!_{a\!^{\star}_{_{i(W_{_\sigma})}}})\geq
\rho_{_0}({\kappa}\!^{^{_{V_{_\sigma}}}}\
\!\!\!\!\!\!\!\!_{a\!^{\star}_{_{i(W_{_\sigma})}}})
\geq
\widetilde{\rho}({\kappa}\!^{^{_{V_{_\sigma}}}}\
\!\!\!\!\!\!\!_{a\!^{\star}_{_{i(W_{_\sigma})}}})$. Then

\begin{center}
$(x_{{\!\!\rho_{_1}}({\kappa}\!^{^{_{V_{_\pi}}}}\
\!\!\!\!\!\!\!\!\!_{a\!^{\star}_{_{i(W_{_\pi})}}})},x_{{\!\!\rho_{_0}}({\kappa}\!^{^{_{V_{_\sigma}}}}\
\!\!\!\!\!\!\!\!\!_{a\!^{\star}_{_{i(W_{_\sigma})}}})})\in
V_\sigma\subseteq W$.
\end{center}
\par
\smallskip
\par\noindent
(ii) $a^{\star}_{i(W_{_\sigma})}<a^{\star}_{i(W_{_\pi})}$, \
$V_\pi\subseteq V_\sigma$ and there exists a point
$\xi^{\star}_{\gamma^{\star}}=\xi_{_{a^{\star}_{_{i_{_{\gamma^{\star}}}(W_{_\pi})}}}}\in
(\xi_{_{a^{\star}_{_{i(W_{_\pi})}}}})_{i(W_{_\pi})\in I_{_{W_\pi}}}$
such that $a^{\star}_{i(W_{_\sigma})}\leq
a^{\star}_{i_{_{\gamma^{\star}}}(W_{_\pi})}<a^{\star}_{i(W_{_\pi})}$.
Then, from the extended ($B_1$) a)-property (for each ordinal
$\beta$) in the Lemma \ref{a21}, we conclude that

\begin{center}
$(x_{{\rho}({\kappa}\!^{^{_{V_{_\pi}}}}\
\!\!\!\!\!\!\!\!\!_{a\!^{\star}_{_{i(W_{_\pi})}}})},t)\in V_\pi\circ
V_\pi$\ \ \ \ (1)
\end{center}
\par
\smallskip
\par\noindent
for each $t\in {\hat{I}}_{_{k_{_{a_{_{
i_{_{\gamma^{\star}}}(W_\pi)}}^{\star}}}}}$
(${\hat{I}}_{_{k_{_{a_{_{
i_{_{\gamma^{\star}}}(W_\pi)}}^{\star}}}}}$ are final segments of
the $k_{_{a_{_{ i_{_{\gamma^{\star}}}(W_\pi)}}^{\star}}}$
$\tau_{_p}$-nets of $\xi_{_{a_{_{
i_{_{\gamma^{\star}}}(W_\pi)}}^{\star}}}$).
Since
$(\xi_{_{a^{\star}_{_{i_{_{\gamma^{\star}}}(W_{_\pi})}}}},\xi_{_{a^{\star}_{i(W_{_\sigma})}}})\in
\overline{W}_{_\sigma}$ there is a final segment $I$ of a $\tau_{_p}$-net
of $\xi_{_{a_{_{ i_{_{\gamma^{\star}}}(W_\pi)}}^{\star}}}$,\
$I\subset {\hat{I}}_{_{k_{_{a_{_{
i_{_{\gamma^{\star}}}(W_\pi)}}^{\star}}}}}$ for some $k_{_{a_{_{
i_{_{\gamma^{\star}}}(W_\pi)}}^{\star}}}\in I_{_{W_{_\pi}}}$, for
which there are final segments
$I^{\star}_{_{k_{_{a^{\star}_{i(W_{_\sigma})}}}}}$ of all
$\tau_{_p}$-nets of $\xi_{a^{\star}_{i(W_{_\sigma})}}$ such that
the pair $(I,I^{\star}_{_{k_{_{a^{\star}_{i(W_{_\sigma})}}}}})$ is
finally $V_{_\sigma}$-close. Hence,
\begin{center}
$(I,I^{^\star}_{{{\kappa}\!^{^{{V_{_\sigma}}}}\
\!\!\!\!\!\!\!\!\!\!_{_{a^{\star}_{i(W_{_\sigma})}}}}})$ is finally
$V_{_\sigma}$-close\ \ \ \ (2).
\end{center}
\par
\smallskip
\par\noindent
Finally, we have \par
\bigskip
\par\ \ \ \ \ \ \ \
$I^{^\star}_{{{\kappa}\!^{^{{V_{_\sigma}}}}\
\!\!\!\!\!\!\!\!\!\!_{_{a^{\star}_{i(W_{_\sigma})}}}}}\subseteq
I_{{{\kappa}\!^{^{{V_{_\sigma}}}}\
\!\!\!\!\!\!\!\!\!\!_{_{a^{\star}_{i(W_{_\sigma})}}}}}$ or
$I_{{{\kappa}\!^{^{{V_{_\sigma}}}}\
\!\!\!\!\!\!\!\!\!\!_{_{a^{\star}_{i(W_{_\sigma})}}}}}\subseteq
I^{^\star}_{{{\kappa}\!^{^{{V_{_\sigma}}}}\
\!\!\!\!\!\!\!\!\!\!_{_{a^{\star}_{i(W_{_\sigma})}}}}}$\ \ \ \ \ \
(3).
\par
\bigskip
\par\noindent
From (1), (2) and (3) we conclude that

\begin{center}
$(x_{{\rho}({\kappa}\!^{^{_{V_{_\pi}}}}\
\!\!\!\!\!\!\!\!\!_{a\!^{\star}_{_{i(W_{_\pi})}}})},x_{{\rho}({\kappa}\!^{^{_{V_{_\sigma}}}}\
\!\!\!\!\!\!\!\!\!_{a\!^{\star}_{_{i(W_{_\sigma})}}})})\in
V_\pi\circ V_\pi\circ V_\sigma\circ V_\sigma\subseteq W$.
\end{center}

The index
$\overline{a}_{_0}=\widetilde{\rho}({\kappa}\!^{^{_{V_{_1}}}}\
\!\!\!\!\!\!\!_{a\!^{\star}_{_{i_{_0}(W_{_1})}}})$ is the
extreme index of $G$ for $W$. Thus $G$ is a $\tau_{_p}$-net in $X$.
\end{proof}

\smallskip
\par\noindent

\pspicture(0,0)(0,0)

\pscircle[linewidth=2pt](5.23,-1.30){0.2}

\multiput(4.24,-1.52)(1.46,-1.30){1}{*}
\multiput(3.94,-1.52)(1.46,-1.30){1}{*}
\multiput(3.54,-1.52)(1.46,-1.30){1}{*}

\multiput(6.56,-1.52)(1.46,-1.30){1}{*}
\multiput(6.28,-1.52)(1.46,-1.30){1}{*}
\multiput(6,-1.52)(1.46,-1.30){1}{*}

\psline{->}(7,-1.30)(7.6,-1.30)

\Rput[ul](2.3,-1){$(\xi^{^{W_\pi}})\cap (\xi^{^{W_\sigma}})$}
\Rput[ul](5,-.6){$\xi_{_{a^{\star}_{_{i(W_\pi)}}}}=\xi_{_{a^{\star}_{_{i(W_\sigma)}}}}$}

\psline[linestyle=dotted]{-}(5.2,-1.6)(5.2,-2.2)

\qdisk(5.1,-2.35){1pt}

\Rput[ul](4.8,-2.55){$\star$}

\Rput[ul](4.3,-2.55){$x_{_{\overline{a}}}$}

\qdisk(5.2,-2.70){1pt}

\qdisk(5.05,-2.85){1pt}

\qdisk(5.18,-3){1pt}

\qdisk(5.1,-3.15){1pt}

\Rput[ul](5.02,-3.3){$\star$}

\Rput[ul](4.4,-3.25){$x_{_{\overline{a^{\prime}}}}$}

\qdisk(5.1,-3.45){1pt}

\qdisk(5.15,-3.6){1pt}

\Rput[ul](5.12,-3.75){$\star$}

\psline[linestyle=dotted]{<->}(6.63,-1.6)(6.6,-3.7)

\qdisk(6.62,-2.2){2pt}

\Rput[ul](6.7,-2.15){$\overline{a}$}

\Rput[ul](6.6,-2.75){$\overline{a^{\prime}}$}

\psline[linestyle=dotted]{->}(6.61,-2.2)(5.25,-2.5)

\qdisk(6.62,-2.83){2pt}

\psline[linestyle=dotted]{->}(6.61,-2.83)(5.42,-3.3)

\Rput[ul](6.1,-4.2){$
{\rho}({\kappa}\!^{^{_{V_{_\sigma}}}}\
\!\!\!\!\!\!\!_{a\!^{\star}_{_{i(W_{_\sigma})}}})\equiv{\rho}({\kappa}\!^{^{_{V_{_\pi}}}}\
\!\!\!\!\!\!\!\!_{a\!^{\star}_{_{i(W_{_\pi})}}})
$}

\endpspicture

\bigskip\bigskip\bigskip\bigskip\bigskip\bigskip\bigskip
\bigskip\bigskip\bigskip\bigskip\bigskip\bigskip\bigskip\bigskip\bigskip\bigskip\bigskip\bigskip
\begin{center}
Figure 17
\end{center}
\par\bigskip\smallskip\par
\noindent

\begin{lemma}\label{a24}{\rm Let $({\eta}_{_{\beta^\star}})_{_{\beta^\star\in {B}^\star}}$ be a $\tau_{_q}$-conet
in $(\overline{X},\overline{\mathcal{U}})$ without end point such that
for each $\delta^{\star}>\beta^{\star}$, ${{\mathcal{A}}}_{_{\eta_{_{\delta^{\star}}}}}\nsubseteq {{\mathcal{A}}}_{_{\eta_{_{\beta^{\star}}}}}$.
Then, there is a $\tau_{_q}$-conet
$(s_{_\mu})_{_{\mu\in M}}$ of $(X,{\mathcal{U}})$ such that
the nets $({\eta}_{_{\beta^\star}})_{_{\beta^\star\in {B}^\star}}$ and
$(\phi(s_{_\mu})){_{_{\mu\in M}}}$ are right cofinal.}
\end{lemma}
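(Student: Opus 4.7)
My plan is to dualize the proof of Lemma~\ref{a23} line-for-line, replacing each appeal to Lemma~\ref{a21} by the corresponding appeal to Lemma~\ref{a22} and exchanging the roles of $\tau_{_p}$-nets/first classes/left cofinality with those of $\tau_{_q}$-conets/second classes/right cofinality. Concretely, for each $W\in\mathcal{U}_{_0}$ I would fix an auxiliary $V\in\mathcal{U}_{_0}$ with $V\circ V\circ V\subseteq W$ and invoke Lemma~\ref{a22} to extract a subnet $(\eta_{\beta^\star_{j(W)}})_{j(W)\in J_{_W}}$ of $(\eta_{\beta^\star})_{\beta^\star\in B^\star}$, together with, at each $\eta_{\beta^\star_{j(W)}}$, a concrete $\tau_{_q}$-conet $(y_{\mu(k^{^{_V}}_{\beta^\star_{j(W)}})})$ of $\mathcal{B}_{_{\eta_{\beta^\star_{j(W)}}}}$ and an initial segment satisfying the duals of the closeness properties $(A_{_1})$ and $(B_{_1})$ from Lemma~\ref{a21}. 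Taking the union of these initial segments produces a $\tau_{_q}$-conet $H^{^W}$ in $X$ that is right cofinal with $(\eta_{\beta^\star})$ for $\overline{W}$.

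Next, I would assemble $H=\bigcup\{H^{^W}\mid W\in\mathcal{U}_{_0}\}$ and index it by the analog $\overline{B}$ of the set $\overline{A}$ from Lemma~\ref{a23}, carrying the order $\overline{\beta}\leq\overline{\beta'}$ iff either (i) $\beta^\star_{j(W_{_\sigma})}<\beta^\star_{j(W_{_\pi})}$ in $B^\star$ with an interpolating $\delta^\star$ coming from the $W_{_\pi}$-subnet (with $W_{_\pi}\subseteq W_{_\sigma}$ forcing $V_{_\pi}\subseteq V_{_\sigma}$), or (ii) the base indices and the chosen $\tau_{_q}$-conets coincide and $\overline{\beta'}\geq\overline{\beta}$ in the intrinsic order of the conet. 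Directedness of $(\overline{B},\leq)$ is verified by the same transitivity-through-interpolation argument used for $\overline{A}$ in Lemma~\ref{a23}.

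Finally, to show that $H$ is a $\tau_{_q}$-conet and that $(\eta_{\beta^\star})$ and $(\phi(s_{_\mu}))$ are right cofinal, I fix $W\in\mathcal{U}_{_0}$, set $W_{_1}=V$, and choose $V_{_1}$ with $V_{_1}\circ V_{_1}\circ V_{_1}\subseteq W_{_1}$. For indices $\overline{\beta}\leq\overline{\beta'}$ beyond the extreme index $\widetilde{\mu}(k^{^{_{V_{_1}}}}_{\beta^\star_{j_{_0}(W_{_1})}})$, the two cases (i)--(ii) of the order reduce, exactly as in the dual of the inclusions $(1)$--$(3)$ in the proof of Lemma~\ref{a23}, to
\[
(y_{\overline{\beta}},y_{\overline{\beta'}})\in V_{_\sigma}\circ V_{_\sigma}\circ V_{_\pi}\circ V_{_\pi}\subseteq W,
\]
with the $V_{_\pi}\circ V_{_\pi}$-step coming from the dual of the extended $(B_{_1})(a)$ and the $V_{_\sigma}$-step coming from $(\eta_{\beta^\star_{j(W_{_\sigma})}},\eta_{\delta^\star})\in\overline{W}_{_\sigma}$ via Definition~\ref{a15}. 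Right cofinality is then automatic, since each $H^{^W}$ is right cofinal with $(\eta_{\beta^\star})$ for $\overline{W}$ and the $\overline{B}$-order refines that of $B^\star$. I expect the main obstacle to be notational rather than conceptual: keeping the four layers of indexing $W$, $V$, $j(W)$, and $\mu$ straight across both classes, and making sure that Definition~\ref{a15}---which is stated purely in terms of the first classes $\mathcal{A}$---still yields the desired $W$-closeness in the $\tau_{_q}$-direction. The latter subtlety has, however, already been absorbed in the statement and proof of Lemma~\ref{a22}, so no genuinely new ingredient is needed here.
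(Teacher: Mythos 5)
Your proposal matches the paper's approach exactly: the paper gives no separate proof of Lemma~\ref{a24}, presenting it as the evident dual of Lemma~\ref{a23} obtained by substituting Lemma~\ref{a22} for Lemma~\ref{a21} and exchanging $\tau_{_p}$-nets, first classes and left cofinality with $\tau_{_q}$-conets, second classes and right cofinality --- which is precisely the dualization you carry out. The one genuine subtlety you flag, namely that Definition~\ref{a15} is phrased entirely through the first classes $\mathcal{A}$, is likewise deferred by the paper to the unproved \textquotedblleft similar demonstration\textquotedblright\ of Lemma~\ref{a22}, so your treatment is at the same level of detail as the source.
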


\par
\smallskip
\par\noindent
We state now the general case.
\par\noindent

\begin{theorem} \label{a25}{\rm For every $\tau$-cut $\xi^{\star}$ in $(\overline{X},\overline{\mathcal{U}})$, there is a
$\tau$-cut $\xi$ in $(X,{\mathcal{U}})$ such that each $\tau_{_p}$-net
of ${{\mathcal{A}}}_{_{\xi^{\star}}}$ $\tau((\overline{\mathcal{U}}))$-converges to $\xi$ and each
$\tau_{_q}$-conet of ${{\mathcal{B}}}_{_{\xi^{\star}}}$ $\tau(\overline{{\mathcal{U}}^{-1}})$-converges to $\xi$.}
\end{theorem}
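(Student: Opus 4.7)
The plan is to lift every $\tau_{_p}$-net in $\mathcal{A}_{\xi^{\star}}$ and every $\tau_{_q}$-conet in $\mathcal{B}_{\xi^{\star}}$ down to left/right-cofinal counterparts inside $X$, assemble these into a candidate $\tau$-cut $\xi=(\mathcal{A}_{\xi},\mathcal{B}_{\xi})$ of $X$, and then invoke Proposition \ref{a18} together with the cofinality lemmas to push the convergence back up to $(\overline{X},\overline{\mathcal{U}})$.

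First, for each $\tau_{_p}$-net $\alpha=(\xi_{a^{\star}})_{a^{\star}\in A^{\star}}$ of $\mathcal{A}_{\xi^{\star}}$ I would split into two cases. If $\alpha$ already has an end point $\zeta\in\overline{X}$, then a $\tau_{_p}$-net of $\mathcal{A}_{\zeta}$ can be absorbed directly into the forthcoming $\mathcal{A}_{\xi}$ (using the density Corollary \ref{a19} when $\zeta\in\Lambda(X)$ to reduce to the previous case). If $\alpha$ has no end point, then passing to a cofinal subnet we may assume the non-inclusion hypothesis $\mathcal{A}_{\xi_{a^{\star}}}\nsubseteq\mathcal{A}_{\xi_{\gamma^{\star}}}$ required by Lemma \ref{a23}, which then yields a $\tau_{_p}$-net $t(\alpha)=(t^{\alpha}_{\lambda})_{\lambda\in\Lambda^{\alpha}}$ in $X$ with $(\phi(t^{\alpha}_{\lambda}))_{\lambda}$ left cofinal to $\alpha$ in $\overline{X}$. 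Dually, Lemma \ref{a24} gives for each $\tau_{_q}$-conet $\beta=(\eta_{\beta^{\star}})_{\beta^{\star}\in B^{\star}}$ in $\mathcal{B}_{\xi^{\star}}$ a $\tau_{_q}$-conet $s(\beta)=(s^{\beta}_{\mu})_{\mu\in M^{\beta}}$ of $X$ with $(\phi(s^{\beta}_{\mu}))_{\mu}$ right cofinal to $\beta$.

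Second, I set $\mathcal{A}_{0}=\{t(\alpha):\alpha\in\mathcal{A}_{\xi^{\star}}\}$ and $\mathcal{B}_{0}=\{s(\beta):\beta\in\mathcal{B}_{\xi^{\star}}\}$ and close them as required by Definition \ref{a3}(2): let $\mathcal{B}_{\xi}$ consist of all $\tau_{_q}$-conets of all members of $\mathcal{A}_{0}$, and let $\mathcal{A}_{\xi}$ consist of all $\tau_{_p}$-nets of $X$ whose $\tau_{_q}$-conets exhaust $\mathcal{B}_{\xi}$. The verification of Definition \ref{a3}(1) is the crux. Given $U\in\mathcal{U}$ and a seed pair $(t(\alpha),s(\beta))\in\mathcal{A}_{0}\times\mathcal{B}_{0}$, I pick $V\in\mathcal{U}_{_{0}}$ with $V\circ V\circ V\subseteq U$ and apply Proposition \ref{a6} inside $(\overline{X},\overline{\mathcal{U}})$ to the $\tau$-cut $\xi^{\star}$, obtaining $\tau.((\phi(s^{\beta}_{\mu}))_{\mu},(\phi(t^{\alpha}_{\lambda}))_{\lambda})\in\overline{V}$; Theorem \ref{a17} then pulls this back to $\tau.((s^{\beta}_{\mu})_{\mu},(t^{\alpha}_{\lambda})_{\lambda})\in V\subseteq U$. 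Finally Propositions \ref{a12} and \ref{a13} propagate the property from $\mathcal{A}_{0}\times\mathcal{B}_{0}$ to all of $\mathcal{A}_{\xi}\times\mathcal{B}_{\xi}$, since any element of the closure is left/right cofinal with some seed and hence shares the same $\tau_{_p}$-net--$\tau_{_q}$-conet pairings.

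Third, viewing $\xi=(\mathcal{A}_{\xi},\mathcal{B}_{\xi})$ as a point of $\overline{X}$, I use Proposition \ref{a18} to conclude $\lim_{\lambda}\phi(t^{\alpha}_{\lambda})=\xi$ in $(\overline{X},\overline{\mathcal{U}})$ for every seed $t(\alpha)\in\mathcal{A}_{\xi}$, and the left cofinality of $\alpha$ with $(\phi(t^{\alpha}_{\lambda}))_{\lambda}$ combined with Proposition \ref{a13} (applied in the quasi-uniform space $\overline{X}$) transfers this convergence to $\alpha$ itself. The dual argument handles the $\tau_{_q}$-conets of $\mathcal{B}_{\xi^{\star}}$ with respect to $\tau(\overline{\mathcal{U}}^{-1})$. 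The main obstacle is Step 2: showing that the closure procedure produces a genuine $\tau$-cut in $X$ and not merely a pair of net families for which condition (1) holds only on the seed pairs; this is where the interplay between the $\tau$-cut property of $\xi^{\star}$ inside $\overline{X}$, the cofinality propositions of \S 2, and the faithfulness of $\phi$ guaranteed by Theorem \ref{a17} (which depends crucially on the $T_{_{0}}$ hypothesis via the embedding $\phi$) all have to be orchestrated together.
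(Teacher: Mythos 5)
Your overall strategy coincides with the paper's: lift the $\tau_{_p}$-nets and $\tau_{_q}$-conets of $\xi^{\star}$ down into $X$ via Lemmas \ref{a23} and \ref{a24}, assemble a $\tau$-cut $\xi$ of $X$, and transfer the convergence back up through Propositions \ref{a18}, \ref{a11}, \ref{a12} and \ref{a13}. (The paper defines ${\mathcal{A}}_{_{\xi}}$ and ${\mathcal{B}}_{_{\xi}}$ directly as the nets of $X$ whose $\phi$-images lie in ${\mathcal{A}}_{_{\xi^{\star}}}$ and ${\mathcal{B}}_{_{\xi^{\star}}}$, rather than closing up a family of seeds, but modulo Proposition \ref{a12} the two descriptions come to the same thing.)

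The genuine gap is in your first step. You reduce to Lemma \ref{a23} by claiming that whenever $\alpha=(\xi_{a^{\star}})$ has no end point, ``passing to a cofinal subnet we may assume the non-inclusion hypothesis ${\mathcal{A}}_{_{\xi_{_{a^{\star}}}}}\nsubseteq {\mathcal{A}}_{_{\xi_{_{\gamma^{\star}}}}}$''. This is not true: the relevant dichotomy is not ``end point versus no end point'' but the inclusion structure of the classes. If there is an index $a^{\star}_{_0}$ beyond which ${\mathcal{A}}_{_{\xi_{_{a^{\star}}}}}\subseteq {\mathcal{A}}_{_{\xi_{_{\gamma^{\star}}}}}$ for all $\gamma^{\star}>a^{\star}\geq a^{\star}_{_0}$, then every cofinal subnet inherits these inclusions and no subnet can satisfy the hypothesis of Lemma \ref{a23}; yet such a net need not converge, because the inclusion only yields $(\xi_{\gamma^{\star}},\xi_{a^{\star}})\in \overline{W}$, which is the wrong direction for convergence of the net. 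The paper therefore runs a trichotomy (finally constant; eventually nested classes; cofinally non-nested classes), and in the eventually nested case it obtains the required $\tau_{_p}$-net of $X$ directly from the class ${\mathcal{A}}_{_{\xi^{^i}_{_{k_{_0}}}}}$ of a fixed term of the net instead of from Lemma \ref{a23}; the same three cases reappear in the convergence argument at the end. You need to add this middle case, both for the construction of $\xi$ and for the final convergence claim. A secondary point: in your Step 2 you apply Proposition \ref{a6} to the lifted pair $((\phi(s^{\beta}_{\mu}))_{\mu},(\phi(t^{\alpha}_{\lambda}))_{\lambda})$ as if these were already members of the classes of $\xi^{\star}$; that membership must first be established via Proposition \ref{a12} together with condition (2) of Definition \ref{a3} (left/right cofinal nets have the same conets, so the lifted nets do belong to ${\mathcal{A}}_{_{\xi^{\star}}}$ and ${\mathcal{B}}_{_{\xi^{\star}}}$), which is the order in which the paper argues.
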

\begin{proof}
Let $\xi^{\star}=({{\mathcal{A}}}_{_{\xi^{\star}}},{{\mathcal{B}}}_{_{\xi^{\star}}})$
be a $\tau$-cut in $(\overline{X},\overline{\mathcal{U}})$
such that
${{\mathcal{A}}}_{_{\xi^{\star}}}=\{(\xi^{^i}_{_k})_{_{k\in K_i}}\vert
i\in I\}$ and
 ${{\mathcal{B}}}_{_{\xi^{\star}}}=\{(\eta^{^j}_{_\lambda})_{_{\lambda\in \Lambda_j}}\vert j\in J\}$.
We define
\par
${{\mathcal{A}}}_{_{\xi}}=\{(x_{_a})_{_{a\in A}}$ $\vert
(x_{_a})_{_{a\in A}}$ $\tau_{_p}$-net in $X$ and $(\phi(x_{_a})_{_a\in
A}\in {{\mathcal{A}}}_{_{\xi^{\star}}}\ \}$ and
\par
${{\mathcal{B}}}_{_{\xi}}=\{(y_{_\beta})_{_{\beta\in B}}$ $\vert
(y_{_\beta})_{_{\beta\in B}}$ $\tau_{_q}$-conet in $X$ and
$(\phi(y_{_\beta})_{_\beta\in B}\in {{\mathcal{B}}}_{_{\xi^{\star}}}\
\}$ (see fig. 18).

\pspicture(0,0)(0,0)

\psline[linestyle=dotted]{->}(-.2,-2.43)(5.45,-2.42)

\psline[linestyle=dotted]{<-}(5.72,-2.43)(11.31,-2.42)

\psline[linestyle=dotted]{->}(-.2,-4.93)(5.45,-4.92)

\psline[linestyle=dotted]{<-}(5.72,-4.93)(11.31,-4.92)

\Rput[ul](-.6,-2.15){$(x_{_a})_{_{a\in A}}$}

\Rput[ul](-.6,-4.65){$(x_{_\gamma})_{_{\gamma\in \Gamma}}$}

\Rput[ul](10.3,-2.15){$(y_{_\beta})_{_{\beta\in B}}$}

\Rput[ul](10.3,-4.65){$(x_{_\delta})_{_{\delta\in \Delta}}$}

\qdisk(.76,-2.43){2pt}

\qdisk(1,-2.43){2pt}

\qdisk(1.28,-2.43){2pt}

\qdisk(2.16,-2.43){2pt}

\qdisk(2.4,-2.43){2pt}

\qdisk(2.68,-2.43){2pt}

\qdisk(3.56,-2.43){2pt}

\qdisk(3.8,-2.43){2pt}

\qdisk(4.08,-2.43){2pt}

\qdisk(7.06,-2.43){2pt}

\qdisk(7.3,-2.43){2pt}

\qdisk(7.58,-2.43){2pt}

\qdisk(8.46,-2.43){2pt}

\qdisk(8.7,-2.43){2pt}

\qdisk(8.98,-2.43){2pt}

\qdisk(9.86,-2.43){2pt}

\qdisk(10.1,-2.43){2pt}

\qdisk(10.38,-2.43){2pt}

\qdisk(.76,-4.93){2pt}

\qdisk(1,-4.93){2pt}

\qdisk(1.28,-4.93){2pt}

\qdisk(2.16,-4.93){2pt}

\qdisk(2.4,-4.93){2pt}

\qdisk(2.68,-4.93){2pt}

\qdisk(3.56,-4.93){2pt}

\qdisk(3.8,-4.93){2pt}

\qdisk(4.08,-4.93){2pt}

\qdisk(7.06,-4.93){2pt}

\qdisk(7.3,-4.93){2pt}

\qdisk(7.58,-4.93){2pt}

\qdisk(8.46,-4.93){2pt}

\qdisk(8.7,-4.93){2pt}

\qdisk(8.98,-4.93){2pt}

\qdisk(9.86,-4.93){2pt}

\qdisk(10.1,-4.93){2pt}

\qdisk(10.38,-4.93){2pt}

\Rput[ul](3,-.4){${\mathcal{A}}_{_{\xi^{\star}}}$}

\Rput[ul](7.3,-.4){${\mathcal{B}}_{_{\xi^{\star}}}$}

\Rput[ul](5.3,-.4){$\xi^{\star}$}

\pscircle[linewidth=2pt](1.03,-1.80){0.2}

\pscircle[linewidth=2pt](2.43,-1.80){0.2}

\pscircle[linewidth=2pt](3.83,-1.80){0.2}

\multiput(1.64,-2.02)(1.46,-1.80){1}{*}

\multiput(1.94,-2.02)(1.46,-1.80){1}{*}

\multiput(1.34,-2.02)(1.46,-1.80){1}{*}

\multiput(3.34,-2.02)(1.46,-1.80){1}{*}

\multiput(3.04,-2.02)(1.46,-1.80){1}{*}

\multiput(2.74,-2.02)(1.46,-1.80){1}{*}

\multiput(4.76,-2.02)(1.46,-1.80){1}{*}

\multiput(4.46,-2.02)(1.46,-1.80){1}{*}

\multiput(4.16,-2.02)(1.46,-1.80){1}{*}

\psline{->}(4.95,-1.80)(5.45,-1.80)

\multiput(-.1,-2.02)(1.46,-1.30){1}{*}

\multiput(.2,-2.02)(1.46,-1.30){1}{*}

\multiput(.5,-2.02)(1.46,-1.30){1}{*}

\psline{<-}(5.65,-1.80)(6.15,-1.80)

\multiput(6.2,-2.02)(1.46,-1.30){1}{*}

\multiput(6.5,-2.02)(1.46,-1.30){1}{*}

\multiput(6.8,-2.02)(1.46,-1.30){1}{*}

\pscircle[linewidth=2pt](7.3,-1.80){0.2}

\multiput(7.6,-2.02)(1.46,-1.30){1}{*}

\multiput(7.9,-2.02)(1.46,-1.30){1}{*}

\multiput(8.2,-2.02)(1.46,-1.30){1}{*}

\pscircle[linewidth=2pt](8.7,-1.80){0.2}

\multiput(9,-2.02)(1.46,-1.30){1}{*}

\multiput(9.3,-2.02)(1.46,-1.30){1}{*}

\multiput(9.6,-2.02)(1.46,-1.30){1}{*}

\pscircle[linewidth=2pt](10.1,-1.80){0.2}

\multiput(10.4,-2.02)(1.46,-1.30){1}{*}

\multiput(10.7,-2.02)(1.46,-1.30){1}{*}

\multiput(11,-2.02)(1.46,-1.30){1}{*}

\psline[linestyle=dotted]{-}(.91,-1.93)(.62,-2.9)
\psline[linestyle=dotted]{-}(.91,-1.87)(.62,-.79)
\psline[linestyle=dotted]{-}(1.01,-1.93)(1,-2.9)
\psline[linestyle=dotted]{-}(1.01,-1.67)(1,-.79)
\psline[linestyle=dotted]{-}(1.15,-1.93)(1.4,-2.9)
\psline[linestyle=dotted]{-}(1.15,-1.87)(1.4,-.79)

\psline[linestyle=dotted]{-}(2.31,-1.93)(2.02,-2.9)
\psline[linestyle=dotted]{-}(2.31,-1.87)(2.02,-.79)
\psline[linestyle=dotted]{-}(2.41,-1.93)(2.4,-2.9)
\psline[linestyle=dotted]{-}(2.41,-1.67)(2.4,-.79)
\psline[linestyle=dotted]{-}(2.55,-1.93)(2.8,-2.9)
\psline[linestyle=dotted]{-}(2.55,-1.87)(2.8,-.79)

\psline[linestyle=dotted]{-}(3.71,-1.93)(3.42,-2.9)
\psline[linestyle=dotted]{-}(3.71,-1.87)(3.42,-.79)
\psline[linestyle=dotted]{-}(3.81,-1.93)(3.8,-2.9)
\psline[linestyle=dotted]{-}(3.81,-1.67)(3.8,-.79)
\psline[linestyle=dotted]{-}(3.95,-1.93)(4.2,-2.9)
\psline[linestyle=dotted]{-}(3.95,-1.87)(4.2,-.79)

\psline[linestyle=dotted]{-}(7.21,-1.93)(6.92,-2.9)
\psline[linestyle=dotted]{-}(7.21,-1.67)(6.92,-.79)
\psline[linestyle=dotted]{-}(7.31,-1.93)(7.3,-2.9)
\psline[linestyle=dotted]{-}(7.31,-1.67)(7.3,-.79)
\psline[linestyle=dotted]{-}(7.45,-1.93)(7.7,-2.9)
\psline[linestyle=dotted]{-}(7.45,-1.87)(7.7,-.79)

\psline[linestyle=dotted]{-}(8.61,-1.93)(8.32,-2.9)
\psline[linestyle=dotted]{-}(8.61,-1.67)(8.32,-.79)
\psline[linestyle=dotted]{-}(8.71,-1.93)(8.7,-2.9)
\psline[linestyle=dotted]{-}(8.71,-1.67)(8.7,-.79)
\psline[linestyle=dotted]{-}(8.85,-1.93)(9.1,-2.9)
\psline[linestyle=dotted]{-}(8.85,-1.87)(9.1,-.79)

\psline[linestyle=dotted]{-}(10.01,-1.93)(9.72,-2.9)
\psline[linestyle=dotted]{-}(10.01,-1.67)(9.72,-.79)
\psline[linestyle=dotted]{-}(10.11,-1.93)(10.1,-2.9)
\psline[linestyle=dotted]{-}(10.11,-1.67)(10.1,-.79)
\psline[linestyle=dotted]{-}(10.25,-1.93)(10.5,-2.9)
\psline[linestyle=dotted]{-}(10.25,-1.87)(10.5,-.79)

\pscircle[linewidth=2pt](1.03,-4.30){0.2}

\pscircle[linewidth=2pt](2.43,-4.30){0.2}

\pscircle[linewidth=2pt](3.83,-4.30){0.2}

\multiput(1.64,-4.52)(1.46,-1.80){1}{*}

\multiput(1.94,-4.52)(1.46,-1.80){1}{*}

\multiput(1.34,-4.52)(1.46,-1.80){1}{*}

\multiput(3.34,-4.52)(1.46,-1.80){1}{*}

\multiput(3.04,-4.52)(1.46,-1.80){1}{*}

\multiput(2.74,-4.52)(1.46,-1.80){1}{*}

\multiput(4.76,-4.52)(1.46,-1.80){1}{*}

\multiput(4.46,-4.52)(1.46,-1.80){1}{*}

\multiput(4.16,-4.52)(1.46,-1.80){1}{*}

\psline{->}(4.95,-4.30)(5.45,-4.30)

\multiput(-.1,-4.52)(1.46,-1.30){1}{*}

\multiput(.2,-4.52)(1.46,-1.30){1}{*}

\multiput(.5,-4.52)(1.46,-1.30){1}{*}

\psline{<-}(5.65,-4.30)(6.15,-4.30)

\multiput(6.2,-4.52)(1.46,-1.30){1}{*}

\multiput(6.5,-4.52)(1.46,-1.30){1}{*}

\multiput(6.8,-4.52)(1.46,-1.30){1}{*}

\pscircle[linewidth=2pt](7.3,-4.30){0.2}

\multiput(7.6,-4.52)(1.46,-1.30){1}{*}

\multiput(7.9,-4.52)(1.46,-1.30){1}{*}

\multiput(8.2,-4.52)(1.46,-1.30){1}{*}

\pscircle[linewidth=2pt](8.7,-4.30){0.2}

\multiput(9,-4.52)(1.46,-1.30){1}{*}

\multiput(9.3,-4.52)(1.46,-1.30){1}{*}

\multiput(9.6,-4.52)(1.46,-1.30){1}{*}

\pscircle[linewidth=2pt](10.1,-4.30){0.2}

\multiput(10.4,-4.52)(1.46,-1.30){1}{*}

\multiput(10.7,-4.52)(1.46,-1.30){1}{*}

\multiput(11,-4.52)(1.46,-1.30){1}{*}

\psline[linestyle=dotted]{-}(.91,-4.43)(.62,-5.4)
\psline[linestyle=dotted]{-}(.91,-4.37)(.62,-3.29)
\psline[linestyle=dotted]{-}(1.01,-4.43)(1,-5.4)
\psline[linestyle=dotted]{-}(1.01,-4.17)(1,-3.29)
\psline[linestyle=dotted]{-}(1.15,-4.43)(1.4,-5.4)
\psline[linestyle=dotted]{-}(1.15,-4.37)(1.4,-3.29)

\psline[linestyle=dotted]{-}(2.31,-4.43)(2.02,-5.4)
\psline[linestyle=dotted]{-}(2.31,-4.37)(2.02,-3.29)
\psline[linestyle=dotted]{-}(2.41,-4.43)(2.4,-5.4)
\psline[linestyle=dotted]{-}(2.41,-4.17)(2.4,-3.29)
\psline[linestyle=dotted]{-}(2.55,-4.43)(2.8,-5.4)
\psline[linestyle=dotted]{-}(2.55,-4.37)(2.8,-3.29)

\psline[linestyle=dotted]{-}(3.71,-4.43)(3.42,-5.4)
\psline[linestyle=dotted]{-}(3.71,-4.37)(3.42,-3.29)

\psline[linestyle=dotted]{-}(3.81,-4.43)(3.8,-5.4)
\psline[linestyle=dotted]{-}(3.81,-4.17)(3.8,-3.29)

\psline[linestyle=dotted]{-}(3.95,-4.43)(4.2,-5.4)
\psline[linestyle=dotted]{-}(3.95,-4.37)(4.2,-3.29)

\psline[linestyle=dotted]{-}(7.21,-4.43)(6.92,-5.4)
\psline[linestyle=dotted]{-}(7.21,-4.17)(6.92,-3.29)
\psline[linestyle=dotted]{-}(7.31,-4.43)(7.3,-5.4)
\psline[linestyle=dotted]{-}(7.31,-4.17)(7.3,-3.29)
\psline[linestyle=dotted]{-}(7.45,-4.43)(7.7,-5.4)
\psline[linestyle=dotted]{-}(7.45,-4.37)(7.7,-3.29)

\psline[linestyle=dotted]{-}(8.61,-4.43)(8.32,-5.4)
\psline[linestyle=dotted]{-}(8.61,-4.17)(8.32,-3.29)
\psline[linestyle=dotted]{-}(8.71,-4.43)(8.7,-5.4)
\psline[linestyle=dotted]{-}(8.71,-4.17)(8.7,-3.29)
\psline[linestyle=dotted]{-}(8.85,-4.43)(9.1,-5.4)
\psline[linestyle=dotted]{-}(8.85,-4.37)(9.1,-3.29)

\psline[linestyle=dotted]{-}(10.01,-4.43)(9.72,-5.4)
\psline[linestyle=dotted]{-}(10.01,-4.17)(9.72,-3.29)
\psline[linestyle=dotted]{-}(10.11,-4.43)(10.1,-5.4)
\psline[linestyle=dotted]{-}(10.11,-4.17)(10.1,-3.29)
\psline[linestyle=dotted]{-}(10.25,-4.43)(10.5,-5.4)
\psline[linestyle=dotted]{-}(10.25,-4.37)(10.5,-3.29)

\psline{->}(5.58,-5.5)(5.58,-.7)

\psline[linestyle=dotted]{->}(-.2,-7.43)(5.45,-7.42)

\psline[linestyle=dotted]{<-}(5.72,-7.43)(11.31,-7.42)

\psline[linestyle=dotted]{->}(-.2,-8.43)(5.45,-8.42)

\psline[linestyle=dotted]{<-}(5.72,-8.43)(11.31,-8.42)

\qdisk(.76,-7.43){2pt}

\qdisk(1,-7.43){2pt}

\qdisk(1.28,-7.43){2pt}

\qdisk(2.16,-7.43){2pt}

\qdisk(2.4,-7.43){2pt}

\qdisk(2.68,-7.43){2pt}

\qdisk(3.56,-7.43){2pt}

\qdisk(3.8,-7.43){2pt}

\qdisk(4.08,-7.43){2pt}

\qdisk(7.06,-7.43){2pt}

\qdisk(7.3,-7.43){2pt}

\qdisk(7.58,-7.43){2pt}

\qdisk(8.46,-7.43){2pt}

\qdisk(8.7,-7.43){2pt}

\qdisk(8.98,-7.43){2pt}

\qdisk(9.86,-7.43){2pt}

\qdisk(10.1,-7.43){2pt}

\qdisk(10.38,-7.43){2pt}

\qdisk(.76,-8.43){2pt}

\qdisk(1,-8.43){2pt}

\qdisk(1.28,-8.43){2pt}

\qdisk(2.16,-8.43){2pt}

\qdisk(2.4,-8.43){2pt}

\qdisk(2.68,-8.43){2pt}

\qdisk(3.56,-8.43){2pt}

\qdisk(3.8,-8.43){2pt}

\qdisk(4.08,-8.43){2pt}

\qdisk(7.06,-8.43){2pt}

\qdisk(7.3,-8.43){2pt}

\qdisk(7.58,-8.43){2pt}

\qdisk(8.46,-8.43){2pt}

\qdisk(8.7,-8.43){2pt}

\qdisk(8.98,-8.43){2pt}

\qdisk(9.86,-8.43){2pt}

\qdisk(10.1,-8.43){2pt}

\qdisk(10.38,-8.43){2pt}

\Rput[ul](-.6,-7.15){$(x_{_a})_{_{a\in A}}$}

\Rput[ul](-.6,-8.15){$(x_{_\gamma})_{_{\gamma\in \Gamma}}$}

\Rput[ul](10.3,-7.15){$(y_{_\beta})_{_{\beta\in B}}$}

\Rput[ul](10.3,-8.15){$(x_{_\delta})_{_{\delta\in \Delta}}$}

\Rput[ul](5.35,-6.5){$\xi$}

\Rput[ul](2.35,-6.9){${\mathcal{A}}_{_{\xi}}$}

\Rput[ul](8,-6.9){${\mathcal{B}}_{_{\xi}}$}

\psline{<-}(5.55,-6.8)(5.55,-8.5)

\endpspicture
\par
\bigskip\bigskip\bigskip\bigskip\bigskip\bigskip\bigskip\bigskip\bigskip
\bigskip\bigskip\bigskip\bigskip\bigskip\bigskip\bigskip\bigskip\bigskip\bigskip\bigskip\bigskip\bigskip\bigskip
\bigskip\bigskip\bigskip
\bigskip\bigskip
\begin{center}
Figure 18
\end{center}
\par\bigskip\smallskip\par
\noindent

We first verify that the pair $\xi=({{\mathcal{A}}}_{_{\xi}},{{\mathcal{B}}}_{_{\xi}})$ constitutes a
$\tau$-cut in $(X,{\mathcal{U}})$. It is need to be proved:
\par
\smallskip
\par\noindent
(A) {\it The classes ${{\mathcal{A}}}_{_{\xi}}$, ${{\mathcal{B}}}_{_{\xi}}$ are non-void} and
\par\noindent
(B) {\it The pair $({{\mathcal{A}}}_{_{\xi}},{{\mathcal{B}}}_{_{\xi}})$ satisfies the two conditions of the Definition
\ref{a3}.}
\par
\smallskip\bigskip
\par\noindent
For (A): we consider $(\xi^{^i}_{_k})_{_{k\in K_{_i}}}\in {\mathcal{A}}_{_{\xi^{\star}}}$. We distinguish two cases.
\par
\smallskip
\par\noindent
{\it {\rm (a)}. The $\tau_{_p}$-net $(\xi^{^i}_{_k})_{_{k\in K_i}}$ is finally
constant}.
\par
\smallskip
\par

In this case, there exists $k_{_0}\in K_{_i}$ such that
$\xi^{^i}_{_k}=\xi^{^i}_{_{k_{_0}}}$ for every $k\geq k_{_0}$. Suppose that
$(x_{_a})_{_{a\in A}}\in {{\mathcal{A}}}_{_{\xi^{^i}_{_{k_{_0}}}}}$. Then,
Proposition \ref{a18} implies that
$(\xi^{^i}_{_{k_{_0}}},(\phi(x_{_a}))_a)\longrightarrow 0$ which jointly to
$(\eta^{^j}_{_\lambda},\xi^{^i}_{_{k_{_0}}})\longrightarrow 0$ we conclude that
$(\eta^{^j}_{_\lambda},(\phi(x_{_a}))_a)\longrightarrow 0$. Thus
$(x_{_a})_{_{a\in A}}\in {{\mathcal{A}}}_{_{\xi^{\star}}}$.
\par
\smallskip
\par\noindent
{\it {\rm (b)}. The $\tau_{_p}$-net $(\xi^{^i}_{_k})_{_{k\in K_i}}$ is not
finally constant}.
\par\noindent
We distinguish two subcases.
\par
\smallskip
\par\noindent
{\rm ($b_1$)} {\it There is a $k_{_0}\in K_{_i}$ such that for each
$k^{\prime}>k\geq k_{_0}$, ${{\mathcal{A}}}_{_{\xi^{^i}_{_k}}}\subseteq {{\mathcal{A}}}_{_{\xi^{^i}_{_{k^{\prime}}}}}$}.
\par\noindent
Suppose that $(x_a)_{_{a\in A}}\in
{{\mathcal{A}}}_{_{\xi^{^i}_{_{k_{_0}}}}}$. Then, from
$((\eta^{^j}_{_\lambda})_{_{\lambda}},(\xi^{^i}_{_{k}})_{_k})\longrightarrow
0$, $((\xi^{^i}_{_k})_{_k},\xi^{^i}_{_{k_{_0}}})\longrightarrow 0$ and
$(\xi^{^i}_{_{k_{_0}}},(\phi(x_a))_a)\longrightarrow 0$ we conclude that
$((\eta^{^j}_{_\lambda})_{_\lambda},(\phi(x_a))_a)\longrightarrow 0$
and thus $(x_{_a})_{_{a\in A}}\in {{\mathcal{A}}}_{_{\xi}}$.
\par
\smallskip
\par\noindent

{\rm ($b_2$)} {\it For each $k\in K$ there is a $k^{\prime}>k$ such
that ${{\mathcal{A}}}_{_{\xi^{^i}_{_k}}}\nsubseteq {{\mathcal{A}}}_{_{\xi^{^i}_{_{k^{\prime}}}}}$}.
\par\noindent
In this
case we can find a $\tau$-subnet
$(\xi^{^i}_{_{k_{_\mu}}})_{_{\mu\in M}}$ of
$(\xi^{^i}_{_k})_{_{k\in K_{_i}}}$ such that for each $\mu^{\prime}>\mu$, it is
${{\mathcal{A}}}_{_{\xi^{^i}_{_{k_{_\mu}}}}}
\nsubseteq {{\mathcal{A}}}_{_{\xi^{^i}_{_{k_{\mu^{\prime}}}}}}$.
According to Lemma \ref{a21}, there exists a $\tau_{_p}$-net $(x_a)_{_{a\in
A}}$ in $(X,{\mathcal{U}})$ whose the $\phi$-images constitutes a
$\tau_{_p}$-net left cofinal to
$(\xi^{^i}_{_{k_{_\mu}}})_{_{\mu\in M}}$
and finally
left cofinal to $(\xi^{^i}_{_k})_{_{k\in K_{_i}}}$ (Proposition
\ref{a11}). Since $(\xi^{^i}_{_k})_{_{k\in K_{_i}}}\in {\mathcal{A}}_{_{\xi^{\star}}}$, Proposition \ref{a12} implies that
$(\phi(x_a)){_{_{a\in A}}}\in {\mathcal{A}}_{_{\xi^{\star}}}$. Hence,
$(x_a){_{_{a\in A}}}\in {\mathcal{A}}_{_{\xi}}$. Thus ${{\mathcal{A}}}_{_{\xi}}$ is non-void. Similarly it is proved that ${{\mathcal{B}}}_{_{\xi}}$ is non-void.
\par
\bigskip
\par
For (B): we firstly prove that the members of ${{\mathcal{A}}}_{_{\xi}}$ has as $\tau_{_q}$-conets all the members of ${{\mathcal{B}}}_{_{\xi}}$. Indeed, by the construction of ${\mathcal{A}}_{_{\xi}}$ and ${\mathcal{B}}_{_{\xi}}$ we have that
$((\phi(y_{_\beta}))_{_\beta},(\phi(x_a))_a)\longrightarrow 0$.
But then, Theorem \ref{a17} implies that
$((y_{_\beta})_{_\beta},(x_a)_a)\longrightarrow 0$. So, the first
demand for the being $\xi$ a $\tau$-cut is fulfilled.

For the second demand, let us assume that
$(t_{_\gamma})_{_{\gamma\in \Gamma}}$ is a $\tau_{_p}$-net which has as
$\tau_{_q}$-conets all the members of ${{\mathcal{B}}}_{_{\xi}}$. We
have to prove that $(t_{_\gamma})_{_{\gamma\in \Gamma}}\in {{\mathcal{A}}}_{_{\xi}}$, that is,
$((\eta^{^j}_{_\lambda})_{_\lambda},(\phi(t_{_{\gamma}}))_{_\gamma})\longrightarrow
0$ for each $j\in J$. Fix a $j\in J$. We distinguish two cases.
\par
\bigskip
\par\noindent
{\it ${\rm (a^{\prime})}$. The $\tau_{_q}$-conet
$(\eta^{^j}_{_\lambda})_{_{\lambda\in \Lambda_j}}\in {{\mathcal{B}}}_{_{\xi^{\star}}}$ is constant}.
\par\noindent
Suppose that $\eta^{^j}_{_\lambda}=\eta^{^j}_{_{\lambda_{_0}}}$ for
each $\lambda\geq \lambda_{_0}$. If $(\xi^{^i}_{_k})_{_{k\in K_i}}\in
{{\mathcal{A}}}_{_{\xi^{\star}}}$, then
$(\eta^{^j}_{_{\lambda_{_0}}},(\xi^{^i}_{_k})_{_k})\longrightarrow
0$. On the other hand, if $(y_{_\beta})_{_{\beta\in B}}\in {{\mathcal{B}}}_{_{\eta^{^j}_{_{\lambda_{_0}}}}}$, then
$((\phi(y_{_\beta}))_{_\beta},\eta^{^j}_{_{\lambda_{_0}}})\longrightarrow
0$ and by
$(\xi^{^i}_{_k})_{_{k\in K}}\in
{{\mathcal{A}}}_{_{\xi^{\star}}}$ we conclude that $((\phi(y_{_\beta}))_{_\beta},
(\xi^{^i}_{_k})_{_k})\longrightarrow 0$.
Therefore,
$(y_{_\beta})_{_{\beta\in B}}\in {{\mathcal{B}}}_{_{\xi}}$ which implies that
${{\mathcal{B}}}_{_{\eta^{^j}_{_{\lambda_{_0}}}}}\subseteq  {{\mathcal{B}}}_{_{\xi}}$.
But then, $(t_{_\gamma})_{_{\gamma\in \Gamma}}$
has as
$\tau_{_q}$-conets all the members of
${{\mathcal{B}}}_{_{\eta^{^j}_{_{\lambda_{_0}}}}}$ which concludes that
$(t_{_\gamma})_{_{\gamma\in \Gamma}}\in {{\mathcal{A}}}_{_{\eta^{^j}_{_{\lambda_{_0}}}}}= {{\mathcal{A}}}_{_{\eta^{^j}_{_{\lambda}}}}$.
The last conclusion implies that
that
$((\eta^{^j}_{_{\lambda}})_{_\lambda},(\phi(t_{_{\gamma}}))_{_\gamma})\longrightarrow
0$, as it is desired.
\par
\bigskip
\par\noindent
{\it ${\rm (b^{\prime})}$. The $\tau_{_q}$-conet
$(\eta^{^j}_{_\lambda})_{_{\lambda\in \Lambda_j}}$ is non-constant}.
\par\noindent
We distinguish two subcases.
\par
\bigskip
\par\noindent
${\rm (b_1^{\prime})}$. {\it There is a $\lambda_{_0}\in \Lambda$
such that for each $\lambda^{\prime}>\lambda\geq \lambda_{_0}$,
${{\mathcal{A}}}_{_{{\eta^{^j}_{_{\lambda^{\prime}}}}}}\subseteq {{\mathcal{A}}}_{_{\eta^{^j}_{_\lambda}}}$}.
\par\noindent
In this case, we have ${{\mathcal{B}}}_{_{\eta^{^j}_{_\lambda}}}\subseteq {{\mathcal{B}}}_{_{{\eta^{^j}_{_{\lambda^{\prime}}}}}}$ and for a fixed
$\lambda\geq\lambda_{_0}$ and a $(y_{_\beta})_{_{\beta\in B}}\in
{{\mathcal{B}}}_{_{\eta^{^j}_{_{\lambda}}}}$ we have
$(y_{_\beta})_{_{\beta\in B}}\in {{\mathcal{B}}}_{_{{\eta^{^j}_{_{\lambda^{\prime}}}}}}$ for each
$\lambda^{\prime}\geq\lambda$. From
$((\phi(y_{_\beta}))_{_\beta},(\eta^{^j}_{_{\lambda^{\prime}}})_{_{\lambda^{\prime}}})\longrightarrow
0$ and
$((\eta^{^j}_{_{\lambda^{\prime}}})_{_{\lambda^{\prime}}},(\xi^{^i}_{_k})_{_k})\longrightarrow
0$, we conclude that
$((\phi(y_{_\beta}))_{_\beta},(\xi^{^i}_{_k})_{_k})\longrightarrow
0$. Hence,
$(y_{_\beta})_{_{\beta\in B}}\in {{\mathcal{B}}}_{_{\xi^{\star}}}$.
As in the proof for the case ${\rm (a^{\prime})}$, we conclude that
$((\eta^{^j}_{_{\lambda}})_{_\lambda},(\phi(t_{_{\gamma}}))_{_\gamma})\longrightarrow
0$.

\smallskip
\par\noindent
${\rm (b_{_2}^{\prime})}$. {\it For each $\lambda\in \Lambda$ there
is $\lambda^{\prime}>\lambda$ such that ${{\mathcal{A}}}_{_{{\eta^{^j}_{_{\lambda^{\prime}}}}}}\nsubseteq {{\mathcal{A}}}_{_{\eta^{^j}_{_\lambda}}}$}.
\par\noindent
In this case, for each $j\in J$, we can find a $\tau$-subnet
$({\eta}^{^j}_{_{\lambda_{_\sigma}}})_{_{\sigma\in \Sigma}}$ such
that for every $\sigma^{\prime}>\sigma$ there holds ${{\mathcal{A}}}_{_{{\eta}^{^j}_{_{\lambda_{_{\sigma^{\prime}}}}}}}\nsubseteq
{{\mathcal{A}}}_{_{{\eta}^{^j}_{_{\lambda_{_{\sigma}}}}}}$. Then, from the
Lemma \ref{a24}, there exists a $\tau_{_q}$-conet $(y_{_\beta})_{_{\beta\in
B}}$ in $(X,{\mathcal{U}})$ such that the $\tau_{_q}$-conets
$({\eta}^{^j}_{_{\lambda_{_\sigma}}})_{_{\sigma\in \Sigma}}$ and
$(\phi(y_{_\beta})){_{_{\beta\in B}}}$ are right cofinal. Since
$({\eta}^{^j}_{_{\lambda}})_{_{\lambda\in \Lambda}}\in {\mathcal{B}}_{_{\xi^{\star}}}$, Proposition \ref{a12} implies that
$(\phi(y_{_\beta})){_{_{\beta\in B}}}\in {\mathcal{B}}_{_{\xi^{\star}}}$. Hence,
$(y_{_\beta}){_{_{\beta\in B}}}\in {\mathcal{B}}_{_{\xi}}$.
Therefore,
$((y_{_\beta})_{_\beta},(t_{_{\gamma}})_{_\gamma})\longrightarrow 0$
or equivalently
$((\phi(y_{_\beta}))_{_\beta},(\phi(t_{_{\gamma}}))_{_\gamma})\longrightarrow
0$. But then, since $({\eta}^{^j}_{_{\lambda_{_\sigma}}})_{_{\sigma\in \Sigma}}$ and
$(\phi(y_{_\beta})){_{_{\beta\in B}}}$ are right cofinal we conclude that
$(({\eta}^{^j}_{_\lambda})_{_\lambda},(\phi(t_{_{\gamma}}))_{_\gamma})\longrightarrow
0$.
Hence, in any case we have that $(({\eta}^{^j}_{_\lambda})_{_\lambda},(\phi(t_{_{\gamma}}))_{_\gamma})\longrightarrow
0$ which implies that $(t_{_\gamma})_{_{\gamma\in \Gamma}}\in {{\mathcal{A}}}_{_{\xi}}$.
\par
\par
\smallskip
\par
Likewise, we prove that if a $\tau_{_p}$-net has as $\tau_{_q}$-conets all the
elements of ${{\mathcal{B}}}_{_{\xi}}$, then it belongs to ${{\mathcal{A}}}_{_{\xi}}$. Thus, $\xi$ constitute a $\tau$-cut
in $(X,{\mathcal{U}})$.
\par
\smallskip
\par\noindent
It remains to prove that for every $i\in I$ (resp. for every $j\in
J$), $(\xi^{^i}_{_k})_{_{k\in
K_i}}$ (resp. $(\eta^{^j}_{_\lambda})_{_{\lambda\in \Lambda_j}}$)
is $\tau(\overline{\mathcal{U}})$-convergent (resp. $\tau((\overline{\mathcal{U}})^{-1})$-convergent) to $\xi$.
It is enough to prove it for $(\xi^{^i}_{_k})_{_{k\in K_i}}$.
\par
We consider a $\tau_{_p}$-net $(\xi^{^i}_{_k})_{_{k\in K}}\in {\mathcal{A}}_{_{\xi^{\star}}}$ and the $\tau$-cut $\xi$ in $X$ the
constructed above. If $(\xi^{^i}_{_k})_{_{k\in K_i}}$ is finally
constant, then there exists $k_{_0}\in K$ such that
$\xi^{^i}_{_k}=\xi^{^i}_{_{k_{_0}}}$ for every $k>k_{_0}$. But then,
as in the case (a) of (A), we have ${{\mathcal{A}}}_{_{\xi^{^i}_{_{k}}}}={{\mathcal{A}}}_{_{\xi^{^i}_{_{k_{_0}}}}}\subseteq {{\mathcal{A}}}_{_{\xi}}$ and thus
$(\xi,\xi^{^i}_{_k})\longrightarrow 0$.
\par
If $(\xi^{^i}_{_k})_{_{k\in K}}$ is not finally constant, we
have to examine the cases ($b_{1}$) and ($b_{2}$) of (A).
In the first one, there exists $k_{_0}\in K$ such that for each
$k>k_{_0}$, ${{\mathcal{A}}}_{_{\xi^{^i}_{_k}}}\subseteq {{\mathcal{A}}}_{_{\xi^{^i}_{_{k_{_0}}}}}\subseteq {{\mathcal{A}}}_{_{\xi}}$. Hence,
$(\xi,\xi^{^i}_{_k})\longrightarrow 0$. In the
($b_{2}$)-case, we can extract a subnet
$({\xi}^{^i}_{_{k_{_\sigma}}})_{_{\sigma\in \Sigma}}$ of
$(\xi^{^i}_{_k})_{_{k\in K_i}}$ and a $\tau_{_p}$-net $(x_{_a})_{_{a\in
A}}\in {\mathcal{A}}_{_{\xi}}$ such that
$({\xi}^{^i}_{_{k_{_\sigma}}})_{_{\sigma\in \Sigma}}$ and
$(\phi(x_{_a})){_{_{a\in A}}}$ are left cofinal. By the Proposition
\ref{a18}, $(\phi(x_{_a})){_{_{a\in A}}}$ $\tau(\overline{\mathcal{U}})$-convergence to $\xi$. Therefore, by using the
Proposition \ref{a13}, we conclude that
$({\xi}^{^i}_{_{k_{_\sigma}}})_{_{\sigma\in \Sigma}}$ $\tau(\overline{\mathcal{U}})$-convergence to $\xi$. Finally, Proposition \ref{a11}
and Proposition \ref{a13} imply that
$(\xi,\xi^{^i}_{_k})\longrightarrow 0$. This completes the
proof.
\end{proof}
\par
\smallskip
\par
The previous theorem implies the following theorem.
\par

\par\noindent

\begin{theorem} \label{a26}{\rm Every $T_{_0}$ quasi-uniform space has a
$\tau$-completion.}
\end{theorem}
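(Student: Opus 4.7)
The plan is to verify that the space $(\overline{X}, \overline{\mathcal{U}})$ constructed in Section 3 serves as the desired $\tau$-completion. All the substantial work is already carried in Theorems \ref{a16}, \ref{a17}, \ref{a25} and Corollary \ref{a19}; so what is left is essentially a short packaging argument.

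First I would collect the ingredients. By Theorem \ref{a16}, $(\overline{X}, \overline{\mathcal{U}})$ is a quasi-uniform space; by Theorem \ref{a17} together with Corollary \ref{a19}, the canonical map $\phi : X \to \overline{X}$ is a dense quasi-uniform embedding. Thus $(X, \mathcal{U})$ sits densely inside $(\overline{X}, \overline{\mathcal{U}})$, as required of any completion. (The $T_0$ hypothesis enters here: it is what guarantees that $\phi$ is injective, as explained in the paragraph preceding Notation \ref{a4}.)

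The key step is to verify $\tau$-completeness of $(\overline{X}, \overline{\mathcal{U}})$ by way of Definition \ref{a900}. Pick an arbitrary $\tau$-cut $\xi^\star = (\mathcal{A}_{\xi^\star}, \mathcal{B}_{\xi^\star})$ of $(\overline{X}, \overline{\mathcal{U}})$; I must produce an end point. Theorem \ref{a25} does exactly this: it furnishes a $\tau$-cut $\xi$ of $(X, \mathcal{U})$ such that every $\tau_p$-net in $\mathcal{A}_{\xi^\star}$ is $\tau(\overline{\mathcal{U}})$-convergent to $\xi$, and every $\tau_q$-conet in $\mathcal{B}_{\xi^\star}$ is $\tau(\overline{\mathcal{U}}^{-1})$-convergent to $\xi$. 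Since by definition $\overline{X}$ is the set of \emph{all} $\tau$-cuts of $X$, we have $\xi \in \overline{X}$, so $\xi$ is an end point of $\xi^\star$ in $\overline{X}$ and, by Proposition \ref{a800}, $\xi^\star$ is not a $\tau$-gap. As $\xi^\star$ was arbitrary, Definition \ref{a900} yields that $(\overline{X}, \overline{\mathcal{U}})$ is $\tau$-complete.

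The real obstacle is not in Theorem \ref{a26} itself but was already absorbed into Theorem \ref{a25}, which in turn rested on the delicate transfinite constructions in Lemmas \ref{a21}--\ref{a24} that lift a $\tau_p$-net (respectively, a $\tau_q$-conet) from $\overline{X}$ to a cofinal one coming from $X$ via $\phi$. At the level of Theorem \ref{a26} the argument is essentially a corollary: $\tau$-completeness of $\overline{X}$ together with the density and embedding properties of $\phi$ exhibit $(\overline{X}, \overline{\mathcal{U}})$ as a $\tau$-completion of the given $T_0$ quasi-uniform space $(X, \mathcal{U})$.
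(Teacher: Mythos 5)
Your proposal is correct and follows essentially the same route as the paper, which derives Theorem \ref{a26} directly from Theorem \ref{a25} (the paper's entire proof is the remark ``The previous theorem implies the following theorem''). Your write-up merely makes explicit the packaging the paper leaves implicit: that Theorems \ref{a16}, \ref{a17} and Corollary \ref{a19} give the dense quasi-uniform embedding, and that the point $\xi\in\overline{X}$ produced by Theorem \ref{a25} is an end point of the arbitrary $\tau$-cut $\xi^{\star}$, so that $(\overline{X},\overline{\mathcal{U}})$ is $\tau$-complete by Proposition \ref{a800} and Definition \ref{a900}.
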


\begin{lemma}\label{a27} {\rm If $(X,{\mathcal{U}})$ is $T_{_0}$, so is $(\overline{X},\overline{\mathcal{U}})$.}

\end{lemma}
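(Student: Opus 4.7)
The plan is to exploit the asymmetric definition of the entourages $\overline{W}$ (Definition~\ref{a15}). Take two distinct $\tau$-cuts $\xi',\xi''\in\overline{X}$. By clause~(2) of Definition~\ref{a3}, each class of a $\tau$-cut is determined by the other, so $\mathcal{A}_{\xi'}\neq\mathcal{A}_{\xi''}$. Swapping the roles of $\xi'$ and $\xi''$ if necessary, I may assume $\mathcal{A}_{\xi''}\not\subseteq\mathcal{A}_{\xi'}$, and pick a $\tau_{_p}$-net $(y_{_a})_{_{a\in A}}\in\mathcal{A}_{\xi''}\setminus\mathcal{A}_{\xi'}$. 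The Galois-type characterisation in Definition~\ref{a3}(2) then supplies a $\tau_{_q}$-conet $(z_{_\beta})_{_{\beta\in B}}\in\mathcal{B}_{\xi'}$ which is \emph{not} a conet of $(y_{_a})_{_a}$: there is $W\in\mathcal{U}$ such that for all $a_{_0},\beta_{_0}$ there exist $a\geq a_{_0}$, $\beta\geq\beta_{_0}$ with $(z_{_\beta},y_{_a})\notin W$.

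Next I choose $V\in\mathcal{U}_{_0}$ with $V\circ V\subseteq W$ and claim that $(\xi',\xi'')\notin\overline{V}$. Suppose, toward a contradiction, that $(\xi',\xi'')\in\overline{V}$. Since $\mathcal{A}_{\xi''}\not\subseteq\mathcal{A}_{\xi'}$, clause~(1) of Definition~\ref{a15} is ruled out, so clause~(2) must apply: there is a $\tau_{_p}$-net $(x'_{_\gamma})_{_{\gamma\in\Gamma}}\in\mathcal{A}_{\xi'}$ with $\tau.((x'_{_\gamma})_{_\gamma},(x_{_a})_{_a})\in V$ for every $(x_{_a})_{_{a\in A}}\in\mathcal{A}_{\xi''}$, in particular $\tau.((x'_{_\gamma})_{_\gamma},(y_{_a})_{_a})\in V$. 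Since $(z_{_\beta})_{_\beta}\in\mathcal{B}_{\xi'}$ is by construction a $\tau_{_q}$-conet of every element of $\mathcal{A}_{\xi'}$, and in particular of $(x'_{_\gamma})_{_\gamma}$, the defining property of a conet yields $\tau.((z_{_\beta})_{_\beta},(x'_{_\gamma})_{_\gamma})\in V$. Composing the two relations gives $(z_{_\beta},y_{_a})\in V\circ V\subseteq W$ for all $\beta$ and $a$ sufficiently large, i.e.\ $\tau.((z_{_\beta})_{_\beta},(y_{_a})_{_a})\in W$, contradicting the choice of $W$.

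Hence $\xi''\notin\overline{V}(\xi')$, which gives the required $T_{_0}$ separation in $(\overline{X},\overline{\mathcal{U}})$. I do not expect any deep obstacle: the one delicate point is to identify the correct direction of the separation, namely that the failure of the inclusion $\mathcal{A}_{\xi''}\subseteq\mathcal{A}_{\xi'}$ is precisely what activates clause~(2) of Definition~\ref{a15} for the ordered pair $(\xi',\xi'')$, and so allows a conet in $\mathcal{B}_{\xi'}$ to interact with a net in $\mathcal{A}_{\xi'}$ and deliver a contradiction through the triangular refinement $V\circ V\subseteq W$. Everything else is bookkeeping with the definition of $\tau$-cut and of $\overline{W}$.
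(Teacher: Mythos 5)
Your argument is correct, but it reaches the $T_{_0}$ property by a different route than the paper. The paper argues from indistinguishability to identity: it takes $(\xi,\xi^{\star})\in \bigcap\{\overline{U}\cap(\overline{U})^{-1}\vert U\in{\mathcal{U}}_{_0}\}$, uses the $\tau({\mathcal{U}}^{-1})\times\tau({\mathcal{U}})$-openness of the entourages in ${\mathcal{U}}_{_0}$ (via an inclusion of the form $W^{-1}(x_a)\times W(x_{_\beta}\!\!^i)\subseteq U$) to transfer the conet relation from the linking net supplied by clause (2) of Definition~\ref{a15} to every member of ${\mathcal{B}}_{_\xi}$, and so obtains both inclusions ${\mathcal{B}}_{_\xi}\subseteq{\mathcal{B}}_{_{\xi^{\star}}}$ and ${\mathcal{B}}_{_{\xi^{\star}}}\subseteq{\mathcal{B}}_{_\xi}$, whence $\xi=\xi^{\star}$. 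You argue contrapositively from distinctness to separability: the closure condition (2) of Definition~\ref{a3} hands you a single witness pair --- a $\tau_{_p}$-net in ${\mathcal{A}}_{\xi''}\setminus{\mathcal{A}}_{\xi'}$ and a conet in ${\mathcal{B}}_{\xi'}$ that fails against it at some $W$ --- and the composition $V\circ V\subseteq W$ replaces the openness argument. Both versions propagate the conet relation through the linking net of Definition~\ref{a15}(2); yours has the small advantages of not invoking the openness of the base ${\mathcal{U}}_{_0}$ and of needing separation in only one direction (your swap of $\xi'$ and $\xi''$), while the paper's records the slightly stronger structural fact that indistinguishable cuts share their entire second class. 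If you polish the write-up, make explicit that the step from $\xi'\neq\xi''$ to ${\mathcal{A}}_{\xi'}\neq{\mathcal{A}}_{\xi''}$ uses conditions (1) and (2) of Definition~\ref{a3} jointly (each class is exactly the set of common conets, respectively nets, of the other), so that each class determines the cut.
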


\begin{proof}Let $(\xi,\xi^{\star})\in \bigcap\{ \overline{U}\cap
({\overline{U}})^{-1}\vert U\in {\mathcal{U}}_{_0} \}$. Suppose that
$U\in {\mathcal{U}}_{_0}$. There is a net $(x_a)_{a\in A}\in {\mathcal{A}}_{_\xi}$ such that for each $(x_{_{\beta}}\!\!^i)_{_{\beta\in
B_i}}\in {\mathcal{A}}_{_{\xi^{\star}}}$ there holds
$\tau.((x_a)_a,(x_{_{\beta}}\!\!^i)_{_\beta})\in U$. By definition
there is a $W\in {\mathcal{U}}_{_0}$ such that $W^{-1}(x_a)\times
W(x_{_{\beta}}\!\!^i)\subseteq U$. Thus for each
$(y_{_a}^{^j})_{_{a\in A_j}}\in {\mathcal{B}}_{_\xi}$ we have that
$\tau.(y_{_a}^{^j},x_{_{\beta}}\!\!^i)\in U$. Hence ${\mathcal{B}}_{_{\xi}}\subseteq {\mathcal{B}}_{_{\xi^{\star}}}$. Similarly from
 $(\xi,\xi^{\star})\in \bigcap(\overline{U})^{-1}$ we conclude that
${\mathcal{B}}_{_{\xi^{\star}}}\subseteq {\mathcal{B}}_{_{\xi}}$. Thus
$\xi=\xi^{\star}$.
\end{proof}

\par
\smallskip
\par
We recall the classical definition of idempotency (adapted to the
specific case of the $\tau$-completion).

\begin{definition}\label{a28}{\rm The $\tau$-completion of a $T_{_0}$ quasi-uniform space $(X,d)$ is {\it idempotent}
if and only if there exists a quasi-uniform isomorphism
$\overline{\phi}:\overline{X}\longrightarrow
\overline{\overline{X}}$ such that for each $\xi^{\star}\in
\overline{X}$, we have
$\overline{\phi}(\xi^{\star})=\overline{\xi^{\star\star}}$
($\xi^{\star\star}\in \overline{\overline{X}}$).}
\end{definition}

The {\it idempotency} of the $\tau$-completion is established by the

\begin{theorem}\label{a29}{\rm The $\tau$-completion of a $T_{_0}$ quasi-uniform space $(X,{\mathcal{U}})$ is idempotent.}
\end{theorem}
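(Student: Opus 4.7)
The plan is to show that the canonical embedding $\overline{\phi}:(\overline{X},\overline{\mathcal{U}}) \to (\overline{\overline{X}},\overline{\overline{\mathcal{U}}})$ is a quasi-uniform isomorphism. The double completion is legitimate: by Lemma \ref{a27} the space $(\overline{X},\overline{\mathcal{U}})$ is $T_{_0}$, and Theorem \ref{a26} then provides a $\tau$-completion of it, $(\overline{\overline{X}},\overline{\overline{\mathcal{U}}})$.

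First I would verify that $\overline{\phi}$ is a quasi-uniform embedding. This is the direct analog of Theorem \ref{a17} with $(\overline{X},\overline{\mathcal{U}})$ in the role of $(X,\mathcal{U})$: for every $\overline{U}\in \overline{\mathcal{U}}_{_0}$, one has $(\overline{\phi}(\xi^{\star}),\overline{\phi}(\eta^{\star}))\in \overline{\overline{U}}$ if and only if $(\xi^{\star},\eta^{\star})\in \overline{U}$. Injectivity then follows from the $T_{_0}$ property of $\overline{X}$.

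The decisive step is surjectivity. Given any $\Xi=(\mathcal{A}_{\Xi},\mathcal{B}_{\Xi})\in \overline{\overline{X}}$, apply Theorem \ref{a25} with $(\overline{X},\overline{\mathcal{U}})$ playing the role of the base space: there exists a $\tau$-cut in $\overline{X}$, i.e., a point $\xi^{\star}\in \overline{X}$, such that every $\tau_{_p}$-net in $\mathcal{A}_{\Xi}$ $\tau(\overline{\overline{\mathcal{U}}})$-converges to $\overline{\phi}(\xi^{\star})$ and every $\tau_{_q}$-conet in $\mathcal{B}_{\Xi}$ $\tau(\overline{\overline{\mathcal{U}}}^{-1})$-converges to $\overline{\phi}(\xi^{\star})$. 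Thus $\overline{\phi}(\xi^{\star})$ is an end point of $\Xi$, and by the convention preceding Notation \ref{a4} the singleton $\{\overline{\phi}(\xi^{\star})\}$ lies in both classes of $\Xi$. I would then identify $\Xi$ with $\overline{\phi}(\xi^{\star})$ as follows: since $\{\overline{\phi}(\xi^{\star})\}\in \mathcal{A}_{\Xi}$, the closure axiom of Definition \ref{a3}(2) forces $\mathcal{B}_{\Xi}$ to contain every $\tau_{_q}$-conet of this singleton, and these are precisely the $\tau_{_q}$-nets $\tau(\overline{\overline{\mathcal{U}}}^{-1})$-convergent to $\overline{\phi}(\xi^{\star})$, that is, the elements of $\mathcal{B}_{\overline{\phi}(\xi^{\star})}$. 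The reverse inclusion $\mathcal{B}_{\Xi}\subseteq \mathcal{B}_{\overline{\phi}(\xi^{\star})}$ is immediate from Definition \ref{a3}(1), applied with the $\tau_{_p}$-net $\{\overline{\phi}(\xi^{\star})\}\in \mathcal{A}_{\Xi}$. Hence $\mathcal{B}_{\Xi}=\mathcal{B}_{\overline{\phi}(\xi^{\star})}$, and applying the closure axiom (2) once more, this time to recover the first class from the second, gives $\mathcal{A}_{\Xi}=\mathcal{A}_{\overline{\phi}(\xi^{\star})}$. Therefore $\Xi=\overline{\phi}(\xi^{\star})$, which proves surjectivity.

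The main obstacle I anticipate is precisely this last identification: one must be careful that the pair of symmetric closure arguments in Definition \ref{a3}(2) really does reconstruct both classes of the $\tau$-cut from its end point, and that the singleton $\{\overline{\phi}(\xi^{\star})\}$ plays the double role of witnessing both convergence statements in $\overline{\overline{X}}$. Once surjectivity is established, combining it with the quasi-uniform embedding property yields that $\overline{\phi}$ is a quasi-uniform isomorphism between $(\overline{X},\overline{\mathcal{U}})$ and $(\overline{\overline{X}},\overline{\overline{\mathcal{U}}})$, giving idempotency in the sense of Definition \ref{a28}.
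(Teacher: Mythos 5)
Your proposal is correct and follows essentially the same route as the paper: the paper likewise obtains surjectivity by applying Theorem \ref{a25} to a $\tau$-cut $\xi^{\star\star}\in\overline{\overline{X}}$ to produce an end point $\overline{\phi}(\xi^{\star})$ and identifying $\xi^{\star\star}=\overline{\phi}(\xi^{\star})$, and it establishes the quasi-uniform isomorphism via the analog of Theorem \ref{a17}. Your explicit reconstruction of both classes of $\Xi$ from the end point via Definition \ref{a3}(2) merely fills in a step the paper leaves as \textquotedblleft obvious.\textquotedblright
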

\begin{proof}We consider a $\tau$-cut
$\xi^{\star\star}\in \overline{\overline{X}}$ where
$\xi^{\star\star}=({\mathcal{A}}_{_{\xi^{\star\star}}},{{\mathcal{B}}}_{_{\xi^{\star\star}}})$. Let $\overline{\phi}$ be the
canonical embedding of $\overline{X}$ into
$\overline{\overline{X}}$.  Then, as in the Theorem \ref{a25}, we define
$\tau$-cut $\xi^{\star}\in \overline{X}$ which is the end point of
$\xi^{\star\star}$. Hence,
$\xi^{\star\star}=\overline{\phi}(\xi^{\star})$ which implies that
$\overline{\overline{X}}=\overline{\phi}(\overline{X})$. On the other hand, by the aim of the Theorems \ref{a25} and \ref{a17} we can check that for each
$U\in \mathcal{U}_{_0}$,
$(\xi^{\star\star},\eta^{\star\star})\in \overline{\overline{U}}\Leftrightarrow
(\overline{\phi}(\xi^{\star}),\overline{\phi}(\eta^{\star}))\in \overline{\overline{U}}\Leftrightarrow
(\xi^{\star},\eta^{\star})\in \overline{U}$.
The rest is obvious.
\end{proof}

\markboth{}{} \markboth{}{}

\par\bigskip\bigskip\par\noindent
{\it Address}: {\tt {Athanasios Andrikopoulos} \\ {Department of Economics\\ University of Ioannina\\ Greece}
\par\smallskip\par\noindent
{\it E-mail address}:{\tt aandriko@cc.uoi.gr}

\par\bigskip\bigskip\par\noindent
{\it Address}: {\tt {John Stabakis} \\ {Department of Mathematics\\ University of Patras\\ Greece}
\par\smallskip\par\noindent
{\it E-mail address}:{\tt jns@math.upatras.gr}


\begin{thebibliography}{99}
\markboth{}{}


\bibitem{BBR}
Bonsangue M., M., Van Breugel F., Rutten M., M., \rm Generalized metric spaces: completion, topology, and power domains via the Yoneda embedding, \it Theoretical Computer Science, \bf 193, \rm 1-2, (1998), 1-51.

\bibitem{csa}
Cs\'{a}sz\'{a}r \'{A}., Foundations of General Topology, Pergamon, 1963.

\bibitem{dea}
De\'{a}k J., \rm Extending and completing quiet quasi-uniformities,
\it Stud. Sci. Math. Hungar., \bf 29, \rm (1994), 349-362.


\bibitem{doi1}
Doitchinov D., \rm On completeness in quasi-metric spaces, \it
Top.Appl., \bf 30, \rm (1988), 127-148, MR 90e: 54068.

\bibitem{doi2}
Doitchinov D., \rm A consept of completeness of quasi-uniform
spaces, \it Top. Appl., \bf 38, \rm (1991), 205-217, MR 92b: 54061.



\bibitem{doi3}
Doitchinov D., \rm Completing of quasi-uniform spaces, \it
 Abstracts of the
\textquotedblleft Short conference in Uniform Mathematics
\textquotedblright, \rm Bern, Switzerland, (1991).










\bibitem{FL}
Fletcher P., Lindgren W., \rm Quasi-uniform spaces, \rm Lectures
Notes in Pure and Appl.Math., \bf 77, \rm (1978) \rm Marc.Dekker, New York, MR 84h: 54026.








\bibitem{kel}
Kelly J., \rm Bitopological spaces, \it Proc. London Math. Soc., \bf
13, \rm (1963), 71-89, MR 26\# 729.


\bibitem{ku}
K\"{u}nzi H.P., \rm Nonsymmetric distances and their associated
topologies: About the origins of basic ideas in the area of
assymetric topology, \it Handbook of the History of General
Topology, vol. \bf 3, \rm C.Aull and R.Lowen, eds., Kluwer,
Dordrecht (2001), 853-968.


\bibitem{KS}
K\"{u}nzi H., P., Schellekens M., P., \rm
On the Yoneda completion of a quasi-metric space, \it
Theoretical Computer Science archive,00
\bf 278, \rm 1-2, (2002), 159-194.




\bibitem{law}
Lawvere F., W., \rm Metric spaces, generalized logic and closed categories, \it Rend, Sem. Mat. Fis., Milano, \bf 43 \rm (1973), 135-166




\bibitem{red}
Render H., \rm Nonstandard methods of completing quasi-uniform
spaces, \it Top. Appl., \bf 62, \rm (1995), 101-125, MR 96a:54041.

\bibitem{sto}
Stoltenberg R., \rm A completion for a quasi-uniform space, \it
Proc. Amer. Math. Soc., \bf 18, \rm (1967), 864-867, MR 35\# 6124.




\bibitem{sun1}
S\"{u}nderhauf P., \rm The Smyth-completion of a quasi-uniform
space, \it (Semantics of programming languages and model theory \rm
(Schlob, D\"{a}gstuhl 1991), 189-212, \it Algebra Logic Appl. 5, \rm
Gordon and Breach, Montreux (1993), MR 94k:54056.


\bibitem{sun2}
S\"{u}nderhauf P., \rm Smyth completeness in terms of nets: the
general case, \it Quaest., Math., \bf 20, \rm (1997), \bf 4, \rm
715-720.





















\end{thebibliography}
\end{document}